\pdfoutput=1
\documentclass[reqno]{amsart}
\usepackage[foot]{amsaddr}
\PassOptionsToPackage{isbn=false,url=false,bibencoding=utf8,maxbibnames=5}{biblatex}
\PassOptionsToPackage{hidelinks}{hyperref}
\PassOptionsToPackage{capitalize,nameinlink,noabbrev}{cleveref}
\usepackage[margin=1.5in]{geometry}
\input{definitions}

\title{Partial Algebraic Shifting}
\author[Della Vecchia]{Antony Della Vecchia\textsuperscript{1}}
\author[Joswig]{Michael Joswig\textsuperscript{1,2}}
\author[Lenzen]{Fabian Lenzen\textsuperscript{1}}
\address{%
	\textsuperscript{1}Technische Universität Berlin, Chair of Discrete Mathematics/Geometry\newline
	\textsuperscript{2}Max-Planck Institute for Mathematics in the Sciences, Leipzig
}
\thanks{\textit{E-mail addresses.} \texttt{(\href{vecchia@math.tu-berlin.de}{vecchia}|\href{mailto:joswig@math.tu-berlin.de}{joswig}|\href{mailto:lenzen@math.tu-berlin.de}{lenzen})@math.tu-berlin.de}}
\subjclass[2020]{%
	05E45, 
	13D99, 
	20G99, 
	05C35
}
\keywords{Algebraic shifting, Bruhat decomposition, weak order}


\setlength\ExtraCaptionMargin{\captionindent} 
\begin{document}
	\begin{abstract}
		We study algebraic shifting of uniform hypergraphs and finite simplicial complexes in the exterior algebra with respect to matrices which are not necessarily generic.
Several questions raised by Kalai (2002) are addressed.
For instance, it turns out that the combinatorial shifting of Erd\H{o}s--Ko--Rado (1961) arises as a special case.
Moreover, we identify a sufficient condition for partial shifting to preserve the Betti numbers of a simplicial complex; examples show that this condition is sharp.
	\end{abstract}
	\maketitle\thispagestyle{empty}

\section{Introduction}
Algebraic shifting is a widely applicable method for converting a uniform hypergraph $S$ into another hypergraph, $\Delta(S)$, which is somehow simpler but still retains key properties of $S$.
The algebraic contribution comes from a generic linear transformation of a certain vector space, suitably crafted from $S$.
This procedure can be applied to an abstract simplicial complex, by treating each layer of $(k{-}1)$-faces as a $k$-uniform hypergraph.
In this way, the (exterior or symmetric) algebraic shift $\Delta(K)$ of a simplicial complex $K$ is a simplicial complex again, with the same f-vector and the same Betti numbers as $K$.
Algebraic shifting was introduced by Kalai~\cite{Kalai84,Kalai02} and further studied by Björner--Kalai~\cite{BjornerKalai:1988} and many others.
Here we restrict our attention to algebraic shifting in the exterior algebra.
It had been observed that it formally makes sense to replace the aforementioned generic linear transformation (in the exterior algebra) by an arbitrary one.
In fact,~\cite[Problem~10]{Kalai02} asks to describe how such shifting with respect to a nongeneric transformation (which we call \enquote{partial shifting})
differs from shifting with respect to a generic transformation (which we call \enquote{full shifting}).
The purpose of this article is to explore the situation, for arbitrary ground fields, and give some answers.
To the best of our knowledge, this is the first attempt in this direction.

One original motivation for studying algebraic shifting is the relationship to combinatorial shifting introduced by Erd\H{o}s--Ko--Rado~\cite{ErdosKoEtAl:1961}.
In that landmark article in extremal combinatorics the authors prove an explicit upper bound for the number of pairwise overlapping hyperedges in uniform hypergraphs; see also Frankl~\cite{Frankl:1987} for a survey.
We identify combinatorial shifting as a special case of partial shifting.
In this way, our present work offers a unified approach to (exterior) algebraic shifting and combinatorial shifting.
For instance, this will allow us to identify where these two concepts agree and how they differ.

A second train of thought is concerned with viewing combinatorial and algebraic shifting through the lens of commutative algebra and algebraic geometry.
Herzog--Tarai~\cite{HerzogTerai:1999} and Aramova--Herzog~\cite{Aramova+Herzog:2000} related (full) algebraic shifting to generic initial ideals.
Combinatorial shifting occurs in work of Knutson~\cite{Knutson:1408.1261} on Schubert calculus.
Recently, Woodroofe \cite{Woodroofe:2022} gave a new proof of Erd\H{o}s--Ko--Rado via a fixed point result on algebraic groups; see also Bulavka--Gandini--Woodroofe \cite{BulavkaGandiniWoodroofe:2406.17857} for extensions.

Our main results are the following.
First, we observe a fundamental connection between partial exterior shifting of a uniform hypergraph $S$ on $n$ points and the Bruhat decomposition of the general linear group $\GL(n,\EE)$, with respect to its standard $(B,N)$-pair, over a suitably chosen field $\EE$; for $(B,N)$-pairs see Abramenko--Brown~\cite[Chapter 6]{Abramenko+Brown:2008}.
As a consequence, we obtain a special case of partial shifting parameterized by permutations in the symmetric group $\SymmetricGroup{n}$, which is the Weyl group of $\GL(n,\EE)$.
Recall that $\GL(n,\EE)$ always admits a Bruhat decomposition over any field $\EE$, even if $\EE$ is not perfect; see, e.g., Bump's monograph on Lie groups~\cite[\S30]{Bump:2013}.
Second, this description allows us to recognize the full shift as the partial shift with respect to the longest element in $\SymmetricGroup{n}$.
In this language, combinatorial shifting with respect to a simple transposition can be realized as partial exterior shifting with respect to the same transposition.
This observation provides one possible answer to a question asked in~\cite[Problem~40]{Kalai02}.
Third, partial shifting with respect to a permutation allows to vary the genericity of the shifting operation in a well-behaved way.
The extreme cases are given by combinatorial shifting and full shifting.
To analyze the situation further, we study the \emph{partial shift graph} of a uniform hypergraph $S$; the latter is a directed graph which represents all partial shifts of $S$ with respect to all permutations.
In particular, we show that the partial shift graph is acyclic; this addresses~\cite[Problem~7]{Kalai02}.
Fourth, equipped with all of the above we prove that the Betti numbers of a simplicial complex are preserved by partial shifts with respect to permutations which are large enough in the weak order.
The latter result constitutes a vast generalization of known properties of algebraic shifting and a partial answer to Kalai's question concerning the topological ramifications of nongeneric shifting~\cite[Problem~10]{Kalai02}.
Our implementation of partial shifting in the computer algebra system \OSCAR~\cite{OSCAR-book} helped us finding interesting examples.
The algorithmic details will be discussed elsewhere.

This article is organized as follows.
Section \ref{sec:shifting} serves the purpose of fixing our notation; we usually consider $k$-uniform hypergraphs on $n$ vertices.
The Bruhat decomposition of $\GL(n,\EE)$ is the topic of \cref{sec:bruhat}.
It is not difficult to see that the partial shift only depends on the (right) coset of the Borel group of upper triangular matrices (\cref{thm:invariance upper triangular}).
Such cosets are known to be parameterized by permutations in the symmetric group $\SymmetricGroup{n}$.
Our main contribution here is an explicit method of systematically picking suitable matrix representatives for a given permutation (Proposition \ref{thm:partial shift by u(w)}).
On the way we see that the full shift amounts to partial shifting with respect to the longest element in $\SymmetricGroup{n}$ (Proposition \ref{prop:w0}).
Combinatorial shifting and its interpretation as partial shifting with respect to a transposition is addressed in \cref{sec:combinatorial}.
Section \ref{sec:partial-shift-graph} is dedicated to the partial shift graph $\ShiftGraph(n,k,m)$, whose nodes are the $k$-uniform hypergraphs on $n$ vertices with exactly $m$ edges.
Its directed edges correspond to partial shifts.
In \cref{thm:partial shift graph acyclic} we show that $\ShiftGraph(n,k,m)$ is acyclic.
Up to this point the focus of our work is on hypergraphs and their partial shifts.
In \cref{sec:simplicial} we change our focus by looking at simplicial complexes.
First of all, in this scenario there are many natural and interesting examples, e.g., the triangulation of the real projective plane $\RR\PP^2_6$ on six vertices (\cref {exmp:rp26:contracted}).
Moreover, only in this setup it makes sense to study any topological implications of partial shifting.
We prove that the partial shift with respect to a permutation $w \in \SymmetricGroup{n}$ preserves the Betti numbers if $w$ is at least as large in the weak order as the standard $n$-cycle $(1 \ 2 \ 3 \ \ldots \ n)$;
see \cref{thm:n-cycle}.
This generalizes a key fact known about full shifting~\cite[Theorem~3.1]{BjornerKalai:1988}.
Further, for $\RR\PP^2_6$ this result is also tight.
That is, each partial shift (in characteristic zero) of $\RR\PP^2_6$ by a permutation not at least as large as $(1 \ 2 \ 3 \ \ldots \ n)$ changes the Betti numbers (\cref{exmp:tight}).
The final \cref{sec:outlook} raises some open questions.

\smallskip


\paragraph{Acknowledgement.}
We would like to thank Gil Kalai and Eran Nevo for inspiring discussions on algebraic shifting.
Further we are indebted to Russ Woodroofe for pointing out Knutson's paper \cite{Knutson:1408.1261}.
The work of all authors is funded by the Deutsche Forschungsgemeinschaft (DFG, German Research Foundation).
Specifically, ADV and MJ were supported by \enquote{MaRDI (Mathematical Research Data Initiative)} (NFDI 29/1, project ID 460135501);
MJ was supported by \enquote{Symbolic Tools in Mathematics and their Application} (TRR 195, project ID 286237555);
MJ and FL were supported by \enquote{The Berlin Mathematics Research Center MATH$^+$} (EXC-2046/1, project ID 390685689).

\section{Algebraic shifting}
\label{sec:shifting}
For $n \in \NN$, denote by $\binom{[n]}{k}$ the set of $k$-element subsets of $[n] \coloneqq \{1,2,\dots,n\}$.
We view nonempty subsets of $\binom{[n]}{k}$ as $k$-uniform hypergraphs on $n$-elements.
The total order on $[n]$ induces a partial order $\leq$ on $\binom{[n]}{k}$, called the \emph{domination order}, given by $\{a_1 < \dotsb < a_k\} \leq \{b_1 < \dotsb < b_k \}$ if $a_i \leq b_i$ for all $i$.
A hypergraph $S \subseteq \binom{[n]}{k}$ is called \emph{shifted} if it is an initial set with respect to this order;
i.e., if $\sigma \in S$ and $\rho \leq \sigma$, then also $\rho \in S$.
Explicitly, this means that $\sigma \setminus \{j\} \cup \{i\} \in S$ for every $\sigma \in S$ and $i \notin \sigma$, $j \in \sigma$ with $i \leq j$.

The following construction assigns to every $S \subseteq \binom{[n]}{k}$ a shifted hypergraph $\Delta(S)$ that shares several combinatorial properties with $S$.
Let $\EE \supseteq \FF$ be a field extension and $g\in\GL(n,\EE)$ be an invertible matrix.
The \emph{lexicographic order} on $\binom{[n]}{k}$, given by $S \lex\leq T$ if $\min S \symdiff T \in S$,
where $\symdiff$ denotes the symmetric difference, is a total order refining $\leq$.
It allows us to identify $\binom{[n]}{k}$ with $\bigl[\binom{n}{k}\bigr]$.
Then the $k$th \emph{compound matrix} $g^{\wedge k}$ of $g$ is the $\binom{n}{k} \times \binom{n}{k}$-matrix
with $g^{\wedge k}_{\sigma\tau} = \det (g_{ij})_{i \in \sigma, j \in \tau}$ for $\sigma, \tau \in \binom{[n]}{k}$.
We denote the row and column of $g^{\wedge k}$ corresponding to $\sigma$ and $\tau$ by $g^{\wedge k}_{\sigma*}$ and $g^{\wedge k}_{*\tau}$, respectively.
For $S \subseteq \binom{[n]}{k}$, we write $g^{\wedge S}$ for the $\abs{S} \times \binom{n}{k}$-submatrix of $g^{\wedge k}$ with rows indexed by $S$.

\begin{definition}\label{def:partial-g-shift}
	The \emph{partial (exterior) shift} of $S$ by the invertible matrix $g$ is
	\begin{equation}
		\label{eq:def shift}
		\Delta_g(S) \coloneqq \Set[\big]{ \sigma \in \tbinom{[n]}{k}; g^{\wedge S}_{*\sigma} \notin \Span_{\EE}[\big]{(g^{\wedge S}_{*\rho})_{\rho \lex< \sigma}} }.
	\end{equation}
	In other words, $\Delta_g(S)$ is given by those sets $\sigma \in \binom{[n]}{k}$ whose corresponding column $g^{\wedge S}_{*\sigma}$ of $g^{\wedge S}$
	does not lie in the linear span of columns $g^{\wedge S}_{*\rho}$ left of it.
\end{definition}

We call $g$ \emph{generic} if all entries of $g$ are algebraically independent over $\FF$.
In this case, $\Delta(S)\coloneqq \Delta_g(S)$ is shifted and does not depend on $g$; see~\cite[Theorem~2.1]{Kalai90} and \cref{sec:bruhat} below.
Algebraic shifting can be considered with respect to any fixed total order on $\binom{[n]}{k}$ refining $\leq$; throughout we stick to the lexicographic order $\lex\leq$.
\cref{def:partial-g-shift} directly lends itself to a naive algorithm for computing full or partial shifts.
Our first example illustrates the method.

\begin{example}\label{exmp:algo}
	We consider the case $n=4$ and $k=2$, where $\FF=\GF{2}$, and $\EE = \FF(x_{11},\dotsc,x_{44})$ is the field of rational functions in the 16 indeterminates $x_{11}$, $x_{12},\ldots$,$x_{44}$ with coefficients in $\FF$.
	The $4{\times 4}$-matrix $g = (x_{ij})_{1\leq i, j\leq 4}$
	is invertible and generic.
	Its second compound matrix $g^{\wedge 2}=(\det g_{\sigma\tau} )_{\sigma, \tau \in \binom{[4]}{2}}$ is a $6{\times}6$-matrix.
	For instance, for $\sigma=\{1,2\}$ and $\tau=\{2,3\}$ we have
	\[
	g^{\wedge 2}_{\sigma\tau} \ = \ \det \begin{psmallmatrix} x_{12} & x_{13} \\ x_{22} & x_{23} \end{psmallmatrix} \ = \ x_{12}x_{23} + x_{13}x_{22} \enspace.
	\]
	Abbreviating subsets $\{i_1,\dotsc,i_k\} \subseteq [n]$ by $i_1\dotsm i_k$,
	the lexicographic ordering on the $2$-subsets $\binom{[4]}{2}$ of $\{1,2,3,4\}$ reads $12 < 13 < 14 < 23 < 24 < 34$, and this dictates our ordering of the rows and columns of $g^{\wedge 2}$.
	We let $S=\{\sigma,\tau\}=\{12,23\}$, and obtain
	\[
	g^{\wedge S} \ = \ \begin{psmallmatrix} x_{11}x_{22} + x_{12}x_{21} & x_{11}x_{23} + x_{13}x_{21} & x_{11}x_{24} + x_{14}x_{21} &  x_{12}x_{23} + x_{22}x_{13} &  x_{12}x_{24}+ x_{14}x_{22} &  x_{13}x_{24} + x_{14}x_{23} \\  x_{21}x_{32} + x_{22}x_{31} & x_{21}x_{33} + x_{23}x_{31} & x_{21}x_{34} + x_{24}x_{31} & x_{22}x_{33} + x_{23}x_{32} & x_{22}x_{34} + x_{24}x_{32} & x_{23}x_{34} + x_{24}x_{33}
	\end{psmallmatrix} \enspace.
	\]
	This $2{\times}6$-matrix has the row echelon form
	\begin{equation}\label{eq:algo:row-echelon-form}
		\begin{psmallmatrix} x_{11}x_{22} + x_{12}x_{21} & x_{11}x_{23} + x_{13}x_{21} & x_{11}x_{24} + x_{14}x_{21} &  x_{12}x_{23} + x_{13}x_{22} &  x_{12}x_{24}+ x_{14}x_{22} &  x_{13}x_{24} + x_{14}x_{23} \\[3pt]
			0 & \substack{x_{11}x_{21}x_{22}x_{33} \\ + x_{11}x_{21}x_{23}x_{32} \\ + x_{12}x_{21}^2x_{33} \\+ x_{13}x_{21}^2x_{32} \\ + x_{12}x_{21}x_{23}x_{31} \\ + x_{13}x_{21}x_{22}x_{31}} & \substack{x_{11}x_{21}x_{22}x_{34} \\+ x_{11}x_{21}x_{24}x_{32} \\ + x_{12}x_{21}^2x_{34} \\+ x_{14}x_{21}^2x_{32} \\ + x_{12}x_{21}x_{24}x_{31} \\+ x_{14}x_{21}x_{22}x_{31}} & \substack{x_{11}x_{22}^2x_{33} \\+ x_{11}x_{22}x_{23}x_{32} \\+ x_{12}x_{21}x_{22}x_{33} \\+ x_{13}x_{21}x_{22}x_{32} \\+ x_{12}x_{22}x_{23}x_{31} \\+ x_{13}x_{22}^2x_{31}} & \substack{x_{11}x_{22}^2x_{34} \\+ x_{11}x_{22}x_{24}x_{32} \\+ x_{12}x_{21}x_{22}x_{34} \\+ x_{14}x_{21}x_{22}x_{32} \\+ x_{12}x_{22}x_{24}x_{31} \\+ x_{14}x_{22}^2x_{31}} & \substack{x_{11}x_{22}x_{23}x_{34} \\+ x_{11}x_{22}x_{24}x_{33} \\+ x_{12}x_{21}x_{23}x_{34} \\+ x_{12}x_{21}x_{24}x_{33} \\+ x_{13}x_{21}x_{24}x_{32} \\+ x_{14}x_{21}x_{23}x_{32} \\+ x_{13}x_{22}x_{24}x_{31} \\+ x_{14}x_{22}x_{23}x_{31}}
		\end{psmallmatrix} \enspace ,
	\end{equation}
	whose leading independent columns are indexed by $12$ and $13$.
	Consequently, we obtain the shifted hypergraph $\Delta(S) = \Delta_g(S) = \{12,13\}$.
\end{example}
The considerable growth of the algebraic expressions even in such a small example shows that this direct approach is only of very limited use.
It is an open question if there is a general polynomial time algorithm for computing $\Delta(S)$ from $S$; see~\cite[Problem~8]{Kalai02}.
Recently, Keehn and Nevo found such an efficient algorithm for triangulations of surfaces~\cite{KeehnNevo:2024}.
\begin{remark}[The role of the fields $\EE$ and $\FF$]\label{rem:monte-carlo}
	By~\cite[Theorem 2.1.6]{Kalai02} the full shift only depends on the characteristic of the field $\FF$.
	So it suffices to consider field extensions $\EE \supseteq \FF$ where $\FF$ is a prime field, and $\EE$ is the field of rational functions over $\FF$ with $n^2$ indeterminates, as in \cref{exmp:algo}.
	In characteristic zero it is enough to look at, e.g., $\RR\supseteq\QQ$ because $\RR$ contains sufficiently many transcendentals.
	The latter observation gives rise to a Monte--Carlo algorithm for computing $\Delta(S)$ from $S$ with high probability in characteristic zero; see~\cite[\S2.6]{Kalai02}.
        We usually omit the field from the notation $\Delta(S)$.
	Where the field is not clear from the context, we write $\Delta^\FF(S)$.
\end{remark}

The following result is known, at least for shifting with generic matrices~\cite[Theorem~3.1]{BjornerKalai:1988}.
We provide a short proof to stress that the genericity is not essential.
\begin{lemma}
	\label{thm:shift preserves cardinality}
	For every $S \subseteq \binom{[n]}{k}$ and $g \in \GL(n,\EE)$, we have $\abs{\Delta_g(S)} = \abs{S}$.
\end{lemma}
\begin{proof}
	For a matrix $m$, let $r(m)$ be the sequence with $r_0(m) = 0$ and $r(m)_k = \rk[(m_{*j})_{j \leq k}]$ for $k > 0$.
	Denoting the predecessor of a $\sigma \in \binom{[n]}{k}$ in the lexicographic order by $\sigma-1$, we obtain
	\begin{equation}
		\label{eq:rank-sequence-shift}
		\Delta_g(S) = \Set[\big]{\sigma; r(g^{\wedge S})_{\sigma - 1} < r(g^{\wedge S})_{\sigma}}.
	\end{equation}
	Of course, $r(m)_{j} - r(m)_{j-1} \leq 1$ for any matrix $m$;
	i.e., the sequence $r(m)$ only has steps of height one.
	On the other hand, $g$ is invertible, so by Sylvester's determinant identity (see \cref{thm:compound matrix} below),
	also $g^{\wedge k}$ is invertible.
	In particular, $g^{\wedge k}$ and thus also $g^{\wedge S}$ has linearly independent rows,
	so $\rk g^{\wedge S} = \abs{S}$.
	Now the elements of $\Delta_g(S)$ correspond to the steps in $r(g^{\wedge S})$,
	which are of height one and of which therefore there are precisely $\abs{S}$ many.
\end{proof}

We conclude this sections with some remarks giving various equivalent views on the definitions of $\Delta(S)$ and $\Delta_g(S)$.
\begin{remark}[Compound matrices and exterior powers; see {\cite[\S2.1]{Kalai02}}]
	\label{rmk:exterior powers}
	The definition of $\Delta_g(S)$ can also be viewed using the exterior exterior algebra $\bigwedge V$ of the vector space $V = \EE^n$.
	Namely, if $g$ is a matrix that represents (with respect to a fixed basis) an endomorphism of some vector space $V$,
	then the compound matrix $g^{\wedge k}$ represents the endomorphism $g^{\wedge k}$
	of the exterior power $V^{\wedge k}$ with respect to the induced basis of $V^{\wedge k}$.
	Specifically, let $e_i$ denote the $i$th basis vector of $V$,
	and for $\sigma = \{\sigma_1 < \dotsb < \sigma_k\}$ let $e_{\sigma} \coloneqq e_{\sigma_1} \wedge \dotsb \wedge e_{\sigma_k}$.
	Then
	\[
	g^{\wedge k}(e_\sigma) \ = \ \sum_{\rho} \det (g_{\rho\sigma}) e_\rho \enspace.
	\]
	The determinant is precisely the $(\rho, \sigma)$-entry of the compound matrix $g^{\wedge k}$.
	For $\sigma$ as above, let $g_{\sigma} = ge_{\sigma_1} \wedge \dotsb \wedge ge_{\sigma_k} = g^{\wedge k} e_\sigma$.
	For $S \subset \binom{[n]}{k}$, let $V(S) = V^{\wedge k}/\Span{e_\sigma; \sigma \notin S}$.
	For $v \in V^{\wedge k}$, let $\bar{v}$ be the image of $v$ in $V(S)$.
	Then
	\[
	\Delta_g(S) \ = \ \Set[\big]{\sigma \in \tbinom{[n]}{k};
		g_\sigma \notin \Span_{V(S)}{\bar{g}_\rho | \rho \lex< \sigma}} \enspace.
	\]
\end{remark}
\begin{remark}[Shifting via generic initial ideals; see {\cite{HerzogTerai:1999}}]
	\label{rmk:gin}
	For the generic exterior shift $\Delta(S)$, the above remark can be rephrased as follows.
	For $S \subseteq \smash{\binom{[n]}{k}}$, let $K(S)$ be the simplicial complex with faces $S \cup \bigcup_{m < k} \smash{\binom{[n]}{m}}$,
	and let $I_S \coloneqq I_{K(S)} \coloneqq (e_\sigma\colon \sigma \notin K)$ denote the \emph{Stanley--Reisner ideal} of $K(S)$ in $\bigwedge \EE^n$.
	Then $I_{\Delta_g(S)}$ is the initial ideal $I_{\Delta_g(S)} = \In(g\cdot I_S)$ with respect to the lexicographic order, where $g\cdot e_i = \sum_j g_{ij} e_j$,
	and $I_{\Delta(S)} = \Gin(I_S)$ is the generic initial ideal of $I_S$,
	given by $\Gin(I_S) = \In(g\cdot I_S)$ for any $g \in \GL(n,\EE)$ with $\FF$-algebraically independent entries.
\end{remark}
\begin{remark}[Shifting via Grassmannians]
	\label{rmk:grassmannian}
	Another perspective on algebraic shifting is as follows.
	For each $n$, $k$ and $m$, an invertible matrix $g \in \GL(n, \EE)$ determines a map $\mathit{rs}_g\colon \binom{\binom{[n]}{k}}{m} \to \Gr(m, \binom{n}{k})$
	that sends a hypergraph $S$ to the row span of $g^{\wedge S}$.
	The Plücker coordinates $p_T$ of $\Gr(m, \binom{n}{k})$ are indexed by sets $T \in \binom{\binom{[n]}{k}}{m}$.
	Algebraic shifting $\Delta_g(-)$ is the composition of $\mathit{rs}_g$ with $\Gr\bigl(m, \binom{n}{k}\bigr) \to \binom{\binom{[n]}{k}}{m}$, $V \mapsto \lex\min\Set[\big]{T \in \binom{\binom{[n]}{k}}{m}; p_T(V) \neq 0}$.
\end{remark}

\section{Bruhat decomposition}
\label{sec:bruhat}

It is clear that finitely many matrices suffice to describe all partial shifts of one $k$-uniform hypergraph on $[n]$, even if we consider algebraic shifting over an infinite field $\FF$.
This observation is immediate from the fact that there are finitely many $k$-uniform hypergraphs on the set $[n]$.
The purpose of this section is to explain how to find such matrices.
Obviously, the action of the group $\GL(n,\EE)$ on the exterior algebra $\bigwedge \EE^n$ must play a role; see~\cite[\S2.9]{Kalai02} and~\cite[Part III]{Fulton:Young+tableaux}.

Of key importance is the subgroup $B \subseteq \GL(n, \EE)$ of upper triangular matrices, which is also called the \emph{Borel subgroup} of $\GL(n, \EE)$.
It contains the \emph{unipotent subgroup} $U$ that comprises those upper triangular matrices which have ones on the diagonal.
We let the symmetric group $\SymmetricGroup{n}$ on $n$ elements act on $[n]$ from the right and write $i \cdot w$ for $i \in [n]$ and $w \in \SymmetricGroup{n}$.
This convention aligns with the notation used by \OSCAR~\cite{OSCAR-book},
but differs from writing permutations acting like functions (i.e., on the left) as, e.g., in~\cite{Fulton:Young+tableaux}.
Following the convention in~\cites[Figure~2.5]{BjornerBrenti:2005}[§1.5]{Stanley:EC1},
we regard $\SymmetricGroup{n}$ as a subgroup of $\GL(n, \EE)$ by assigning to $w \in \SymmetricGroup{n}$ the permutation matrix $(\delta_{i \cdot w,j})_{ij} \in \GL(n, \EE)$.
With matrices being multiplied to row vectors on the right, this ensures $e_i w = e_{i\cdot w}$, where $e_i$ denotes the $i$th standard basis vector of $\EE^n$.
In particular, if $g=(g_{ij})$ is an $n{\times}n$-matrix and $v, w \in \SymmetricGroup{n}$, then
\begin{equation}\label{eq:right action Sn}
	(vgw^{-1})_{ij} \ =  \ g_{i \cdot v, j \cdot w} \enspace.
\end{equation}
With this notation, the \emph{Bruhat decomposition} of $\GL(n, \EE)$ is the partition
\begin{equation}\label{eq:bruhat}
	\GL(n, \EE) \ = \ \smashoperator{\coprod_{w \in \SymmetricGroup{n}}} BwB \enspace ,
\end{equation}
into pairwise disjoint double cosets with respect to $B$; see~\cites[\S6.1.4]{Abramenko+Brown:2008}[\S30]{Bump:2013}[Example 1.2.11]{BjornerBrenti:2005}.
Note that the (left) cosets of $B$ in $\GL(n,\EE)$ bijectively correspond to the maximal flags of subspaces in the projective space $\PG(n-1,\EE)$; this is because $B$ is the stabilizer of the maximal flag formed from the subspaces $\langle e_1,e_2,\dots,e_n\rangle \supseteq \langle e_2,e_3,\dots,e_n\rangle \supseteq \ldots \supseteq\langle e_n\rangle \supseteq 0$.
We obtain an induced decomposition of the flag space
\[
\GL(n,\EE)/B \ = \ \smashoperator{\coprod_{w \in \SymmetricGroup{n}}} BwB/B \enspace,
\]
where the parts $BwB/B$ are known as the \emph{Schubert cells}.

\subsection*{Triangular and diagonal matrices}
The relevance of the Bruhat decomposition for partial shifting will become apparent after establishing a few more facts.

\begin{lemma}[{\cite[Theorems~6.12, 6.13, 6.19]{Fiedler:2013}}]
	\label{thm:compound matrix}
	If $g$ is invertible, diagonal, upper triangular, and/or lower triangular, then so is $g^{\wedge k}$.
	Furthermore, $(gh)^{\wedge k} = g^{\wedge k} h^{\wedge k}$.
\end{lemma}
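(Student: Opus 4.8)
The statement is classical (see the cited reference); here is how I would organize a self-contained argument. The plan is threefold: first establish the multiplicativity $(gh)^{\wedge k}=g^{\wedge k}h^{\wedge k}$, then deduce the invertibility assertion from it, and finally handle the triangular and diagonal cases by inspecting the Leibniz expansions of the minors that make up $g^{\wedge k}$. The multiplicativity is precisely the Cauchy--Binet formula: for $\sigma,\tau\in\binom{[n]}{k}$, expanding the minor $\det\bigl((gh)_{ij}\bigr)_{i\in\sigma,\,j\in\tau}$ through the matrix product and grouping terms by the surviving set $\rho\in\binom{[n]}{k}$ of intermediate indices gives $\sum_{\rho}\det(g_{\sigma\rho})\det(h_{\rho\tau})$, which is the $(\sigma,\tau)$-entry of $g^{\wedge k}h^{\wedge k}$. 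Equivalently, one may invoke the functoriality of the $k$-th exterior power recalled in \cref{rmk:exterior powers}: $g^{\wedge k}$ is the matrix of $\bigwedge^{k}$ applied to the endomorphism $g$. This already settles the ``Furthermore'' clause.

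For invertibility, note first that the compound matrix of $\mathrm{id}_n$ is the $\binom nk\times\binom nk$ identity matrix: the minor of $I_n$ on rows $\sigma$ and columns $\tau$ equals $1$ for $\sigma=\tau$ and vanishes otherwise, since for $\sigma\ne\tau$ that submatrix has a zero row. Hence, if $g$ is invertible, multiplicativity yields $g^{\wedge k}(g^{-1})^{\wedge k}=(gg^{-1})^{\wedge k}=I_{\binom nk}$, so $g^{\wedge k}$ is invertible. (In particular $\det g^{\wedge k}\ne0$, which is exactly the instance of Sylvester's determinant identity used in the proof of \cref{thm:shift preserves cardinality}.)

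For the triangular and diagonal cases, recall that the rows and columns of $g^{\wedge k}$ are indexed by $\binom{[n]}{k}$ ordered by $\lex\le$, which refines the domination order. It therefore suffices to prove that, for $g$ upper triangular, $g^{\wedge k}_{\sigma\tau}=\det(g_{\sigma\tau})$ vanishes whenever $\sigma\not\le\tau$ in the domination order and equals $\prod_{i\in\sigma}g_{ii}$ when $\sigma=\tau$. Write $\sigma=\{a_1<\dots<a_k\}$ and $\tau=\{b_1<\dots<b_k\}$. A nonzero term in the Leibniz expansion of $\det(g_{\sigma\tau})$ is a signed product $\prod_l g_{a_l,\,b_{\pi(l)}}$ for some bijection $\pi$ of $[k]$ with $a_l\le b_{\pi(l)}$ for all $l$; given such a $\pi$, fix $l$ and consider the $l$ indices $l'$ with $\pi(l')\le l$, so that $a_{l'}\le b_{\pi(l')}\le b_l$ for each, and since there are $l$ of them at least one has $l'\ge l$, forcing $a_l\le a_{l'}\le b_l$. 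As $l$ was arbitrary this gives $\sigma\le\tau$, and for $\sigma=\tau$ only $\pi=\mathrm{id}$ contributes, producing the stated diagonal entries. For lower triangular $g$ I would apply this to $g^{\mathsf T}$, using $(g^{\mathsf T})^{\wedge k}=(g^{\wedge k})^{\mathsf T}$ (the minor of $g^{\mathsf T}$ on $\sigma,\tau$ is the transpose of the minor of $g$ on $\tau,\sigma$). A diagonal matrix is both upper and lower triangular, hence so is its compound, i.e.\ the compound is diagonal; and since the inverse of an invertible triangular (resp.\ diagonal) matrix is again of the same shape, all the combined ``invertible plus triangular/diagonal'' assertions follow from the cases already treated.

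The only step carrying real content is the combinatorial implication in the last paragraph — that the support of the Leibniz expansion of a minor of a triangular matrix consists of domination-comparable index pairs. The one point to watch is that ``upper/lower triangular'' for $g^{\wedge k}$ must be read with respect to the ordering actually used to index compound matrices, which is legitimate precisely because $\lex\le$ refines the domination order; everything else is Cauchy--Binet bookkeeping.
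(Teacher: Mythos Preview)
Your argument is correct. The paper itself gives no proof of this lemma; it merely cites \cite[Theorems~6.12, 6.13, 6.19]{Fiedler:2013} and uses the statement as a black box. Your self-contained treatment---Cauchy--Binet for multiplicativity, then functoriality for invertibility, and the pigeonhole argument on the Leibniz expansion to show that nonvanishing of $\det(g_{\sigma\tau})$ for upper triangular $g$ forces $\sigma\le\tau$ in the domination order---is exactly the standard route and fills in what the paper omits. The one point worth being explicit about (you do address it) is that ``upper triangular'' for $g^{\wedge k}$ is relative to the lexicographic ordering on $\binom{[n]}{k}$, and your argument actually proves the stronger vanishing with respect to the domination order, which suffices because $\lex\le$ refines it.
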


\begin{lemma} 
	\label{thm:invariance upper triangular}
	If $b \in B$ is upper triangular and $g \in \GL(n, \EE)$, then $\Delta_{gb}(S) = \Delta_g(S)$ for all $S$.
\end{lemma}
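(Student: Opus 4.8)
The plan is to reduce the statement, via the multiplicativity of compound matrices, to a routine fact about right multiplication by an invertible upper triangular matrix. First I would use \cref{thm:compound matrix} to write $(gb)^{\wedge k} = g^{\wedge k}\,b^{\wedge k}$; since passing to the rows indexed by $S$ commutes with right multiplication, this gives $(gb)^{\wedge S} = g^{\wedge S}\,b^{\wedge k}$, and by the same lemma the matrix $N \coloneqq b^{\wedge k}$ is invertible and upper triangular. Here "upper triangular" is meant with respect to the lexicographic order $\lex\leq$ on $\binom{[n]}{k}$ that indexes $g^{\wedge k}$; this is consistent, since the entry $N_{\sigma\tau} = \det(b_{ij})_{i\in\sigma,\,j\in\tau}$ can be nonzero only when $\sigma$ lies below $\tau$ in the domination order, and $\lex\leq$ refines $\leq$. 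Writing $M \coloneqq g^{\wedge S}$, it thus suffices to compare the membership condition of \eqref{eq:def shift} for the columns of $M$ with the one for the columns of $MN$.

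To do so, I would fix $\sigma \in \binom{[n]}{k}$ and let $V_\sigma$ denote the $\EE$-span of the columns $M_{*\rho}$ with $\rho \lex< \sigma$. Because $N$ is upper triangular, each column satisfies $(MN)_{*\rho} = \sum_{\mu \lex\leq \rho} M_{*\mu} N_{\mu\rho}$, hence lies in the span of the $M_{*\mu}$ with $\mu \lex\leq \rho$. Applying this both to $N$ and to its inverse (which is again upper triangular, being the inverse of an invertible triangular matrix), together with $M = (MN)N^{-1}$, one sees that the columns $(MN)_{*\rho}$ with $\rho \lex< \sigma$ span exactly $V_\sigma$. Moreover $(MN)_{*\sigma} = N_{\sigma\sigma}\,M_{*\sigma} + c$ with $c \in V_\sigma$, and $N_{\sigma\sigma} \neq 0$ because $N$ is invertible and triangular; therefore $(MN)_{*\sigma} \in V_\sigma$ if and only if $M_{*\sigma} \in V_\sigma$.

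Combining these two points with \eqref{eq:def shift} finishes the proof: $\sigma \in \Delta_{gb}(S)$ holds exactly when $(gb)^{\wedge S}_{*\sigma} = (MN)_{*\sigma}$ fails to lie in the span of the columns $(MN)_{*\rho}$, $\rho \lex< \sigma$, which by the second paragraph happens exactly when $g^{\wedge S}_{*\sigma} = M_{*\sigma}$ fails to lie in $V_\sigma$, i.e.\ exactly when $\sigma \in \Delta_g(S)$. I do not expect a genuine obstacle here: the only mildly delicate point is to be explicit about the order with respect to which $b^{\wedge k}$ is upper triangular, and once that is settled the rest is the standard observation that right multiplication by an invertible upper triangular matrix preserves the flag of column spaces determined by the leading indices.
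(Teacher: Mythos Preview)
Your proof is correct and follows essentially the same approach as the paper: both invoke \cref{thm:compound matrix} to write $(gb)^{\wedge S} = g^{\wedge S}\,b^{\wedge k}$ with $b^{\wedge k}$ invertible upper triangular, and then use that right multiplication by such a matrix preserves the leading-column independence structure. The only difference is that the paper states this last step in one line, whereas you spell out the flag-of-column-spaces argument explicitly (and also justify why upper-triangularity of $b^{\wedge k}$ holds with respect to $\lex\leq$ via the domination order).
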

\begin{proof}
	By \cref{thm:compound matrix}, $(gb)^{\wedge S} = g^{\wedge S} b^{\wedge k}$, and $b^{\wedge k}$ is upper triangular.
	Therefore,
	\[g^{\wedge S}_{*\sigma} \in \Span_{\EE}{g^{\wedge S}_{*\tau}; \tau < \sigma}\]
	if and only if
	\[(g^{\wedge S} b^{\wedge k})_{*\sigma} \in \Span_{\EE}{(g^{\wedge S} b^{\wedge k})_{*\tau}; \tau < \sigma}.\qedhere\]
\end{proof}
In particular, if $g \in \GL(n, \EE)$ has Bruhat decomposition $g = b w b'$ for $b, b' \in B$ and $w \in \SymmetricGroup{n}$,
the lemma tells us that $\Delta_{bwb'}(-) = \Delta_{bw}(-)$.
Note that the diagonal matrices form a maximal algebraic torus $T$ in $\GL(n,\EE)$.
\begin{lemma}
	\label{thm:invariance diagonal matrices}
	If $t$ is an invertible diagonal matrix, then for all $S$ we have
	\[
	\Delta_{gt}(S) \ = \ \Delta_{tg}(S) \ = \ \Delta_{g}(S) \enspace.
	\]
\end{lemma}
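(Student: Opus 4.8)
The plan is to separate the statement into the two equalities $\Delta_{gt}(S) = \Delta_g(S)$ and $\Delta_{tg}(S) = \Delta_g(S)$. The first one needs no new work: every diagonal matrix is upper triangular, so $t \in B$, and the equality is the special case $b = t$ of \cref{thm:invariance upper triangular}. (Alternatively, one can argue directly: $(gt)^{\wedge S} = g^{\wedge S} t^{\wedge k}$ by \cref{thm:compound matrix}, and $t^{\wedge k}$ is diagonal and invertible, so it merely rescales each column of $g^{\wedge S}$ by a nonzero scalar, which does not affect whether a column lies in the span of the columns to its left.)

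For the second equality I would argue directly from \cref{def:partial-g-shift}. By \cref{thm:compound matrix} we have $(tg)^{\wedge k} = t^{\wedge k} g^{\wedge k}$, and $t^{\wedge k}$ is again invertible and diagonal, with $(\sigma, \sigma)$-entry $\prod_{i \in \sigma} t_{ii} \neq 0$. Restricting to the rows indexed by $S$ gives $(tg)^{\wedge S} = D\, g^{\wedge S}$, where $D$ is the invertible $\abs{S} \times \abs{S}$ diagonal matrix with diagonal entries $(t^{\wedge k}_{\sigma\sigma})_{\sigma \in S}$. Left-multiplication by $D$ rescales the rows of $g^{\wedge S}$, hence sends each column $g^{\wedge S}_{*\tau}$ to $D\, g^{\wedge S}_{*\tau}$, that is, applies the invertible linear automorphism $D$ of $\EE^{\abs{S}}$ to it. An invertible linear map carries a linear dependence among vectors to a linear dependence among their images, and conversely; so for every $\sigma \in \binom{[n]}{k}$ the column $(tg)^{\wedge S}_{*\sigma}$ lies in $\Span_{\EE}{(tg)^{\wedge S}_{*\rho})_{\rho \lex< \sigma}}$ if and only if $g^{\wedge S}_{*\sigma}$ lies in $\Span_{\EE}{(g^{\wedge S}_{*\rho})_{\rho \lex< \sigma}}$. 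By \cref{def:partial-g-shift} this is precisely the assertion $\Delta_{tg}(S) = \Delta_g(S)$.

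I do not expect a genuine obstacle here; the one point to keep straight is the bookkeeping: left-multiplication by a diagonal matrix scales rows (not columns), and forming the row-submatrix indexed by $S$ commutes with this operation, so that $D$ is a well-defined invertible map on the ambient space $\EE^{\abs{S}}$ of the columns. The argument is entirely linear-algebraic and, like \cref{thm:shift preserves cardinality} and \cref{thm:invariance upper triangular}, uses nothing about the field $\EE$.
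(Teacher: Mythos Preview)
Your argument is correct and matches the paper's proof: both handle $\Delta_{gt}(S)=\Delta_g(S)$ by invoking \cref{thm:invariance upper triangular}, and both handle $\Delta_{tg}(S)=\Delta_g(S)$ by observing that left-multiplication by an invertible (diagonal) matrix does not alter which columns are independent of those to their left. The paper phrases the latter as ``does not change the structure of the row echelon form,'' which is exactly the content of your more explicit linear-algebra argument via $D g^{\wedge S}_{*\tau}$.
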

\begin{proof}
	As $t$ is upper triangular we get $\Delta_{gt}(S)=\Delta_{g}(S)$ from \cref{thm:invariance upper triangular}.
	The statement $\Delta_{tg}(S) = \Delta_{g}(S)$ follows because acting with a diagonal matrix first does not change the structure of the row echelon form.
\end{proof}

We can use these statements to lower the number of indeterminates necessary to compute the algebraic shift $\Delta(S)$.
Fixing $n^2$ many $\FF$-algebraically independent elements $x_{ij} \in \EE$ for $1 \leq i , j \leq n$, we define the matrix $\frX=(x_{ij})$.
This matrix occurs as $g$ in \cref{exmp:algo}.
The multiplication $z \mapsto \frX z$ with the matrix $\frX$ on the left is a generic linear automorphism of the $\FF$-vector space $\EE^n$.
Occasionally, we write $\FF(c)$ for the smallest subfield of $\EE$ containing all coefficients of a matrix $c\in\EE^{n{\times}n}$.
Then, by~\cite[Remark 9.2]{Milne:2022}, the extension $\FF(\frX) \supseteq \FF$ is purely transcendental, i.e., the field $\FF(\frX)$ is isomorphic to a field of rational functions; see also \cref{rem:monte-carlo}.
Further, we consider the unipotent matrix
\begin{equation}\label{eq:u}
	\frU \ \coloneqq \
	\begin{psmallmatrix}
		1 & x_{12} & x_{13} & \cdots & x_{1n}    \\
		& 1      & x_{23} & \cdots & x_{2n}    \\
		&        & \ddots & \ddots & \vdots    \\
		&        &        & 1      & x_{n-1,n} \\
		&        &        &        & 1
	\end{psmallmatrix} \enspace.
\end{equation}
Note that $\frU$ only uses $\tfrac{1}{2}n(n-1)$ many among the chosen $\FF$-algebraically independent elements.
We will see in \cref{prop:w0} that $\frU$ suffices to compute $\Delta(S)$.


Throughout the following let $w_0 \in \SymmetricGroup{n}$ be the involutive permutation of $[n]$ which sends $i$ to $n-i+1$; see also \cref{rem:length} below.
The next statement says that the full shifting operation is the same as partial shifting with respect to the matrix $\frU w_0$.
The key is to see that $\frX$ lies in the left coset $\frU w_0 B$.

\pagebreak
\begin{proposition}\label{prop:w0}
	For every $S \subseteq \binom{[n]}{k}$, we have $\Delta_{\frU w_0}(S) = \Delta_{\frX}(S)$.
\end{proposition}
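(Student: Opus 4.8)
The plan is to bring $\frX$ into Bruhat normal form and then feed it into the invariance results of this section. Since $\frX$ has $\FF$-algebraically independent entries, it lies in the big Bruhat cell $Bw_0B$: the latter is Zariski-open and dense in $\GL(n,\EE)$, being the complement of the union of the lower-dimensional cells $BwB$ with $w\neq w_0$, so no nonzero polynomial in the entries vanishes on it and in particular it contains the generic matrix $\frX$. Using $B=UT$ together with $w_0^{-1}Tw_0=T$ one rewrites $Bw_0B=Uw_0B$, so we may write $\frX=uw_0b$ with $u\in U$ and $b\in B$. Then \cref{thm:invariance upper triangular}, applied to the upper triangular matrix $b$, yields $\Delta_{\frX}(S)=\Delta_{uw_0b}(S)=\Delta_{uw_0}(S)$.

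The heart of the argument is to observe that the unipotent factor $u$ is \emph{generic as an element of $U$}, i.e.\ that its $\binom n2$ entries above the diagonal are algebraically independent over $\FF$. I would get this from the fact that multiplication $(u,b)\mapsto uw_0b$ is a bijection of $U\times B$ onto $Bw_0B$ with inverse given by explicit rational formulas in the matrix entries: injectivity holds because $U$ meets $w_0Bw_0^{-1}=B^-$ (the lower triangular matrices) only in the identity, and surjectivity onto $Bw_0B$ is the identity $Bw_0B=Uw_0B$ recalled above. Consequently, solving $\frX=uw_0b$ expresses each entry of $u$ as a rational function (a ratio of minors) of the entries of $\frX$, and one checks that these $\binom n2$ rational functions are algebraically independent over $\FF$ — equivalently, the generic point $\frX$ of $\GL(n,\EE)$ pulls back along this isomorphism to the generic point of $U\times B$, whose first coordinate is the generic point of $U$.

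It then remains to identify $\Delta_{uw_0}(S)$ with $\Delta_{\frU w_0}(S)$. The matrices $uw_0$ and $\frU w_0$ have the same pattern of zero and one entries, while their remaining $\binom n2$ entries are algebraically independent over $\FF$ in each case; matching these up induces an $\FF$-algebra isomorphism $\phi$ between the subfields of $\EE$ generated by the entries of $uw_0$ and of $\frU w_0$, carrying $uw_0$ to $\frU w_0$. Since compound matrix entries are polynomial in the matrix entries, $\phi$ sends $(uw_0)^{\wedge S}$ to $(\frU w_0)^{\wedge S}$, and a field isomorphism preserves all linear dependence relations among columns; moreover the span in \cref{def:partial-g-shift} may equally be taken over these subfields, as rank is unaffected by field extension. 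Hence $\Delta_{uw_0}(S)=\Delta_{\frU w_0}(S)$, and chaining the equalities gives $\Delta_{\frU w_0}(S)=\Delta_{\frX}(S)$. (This last identification is also the special case $w=w_0$ of \cref{thm:partial shift by u(w)}.)

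The step I expect to be the real obstacle is the genericity of the unipotent Bruhat factor $u$ of $\frX$; once that is established, the rest is bookkeeping with \cref{thm:invariance upper triangular,thm:invariance diagonal matrices} and with the insensitivity of $\Delta$ to $\FF$-algebra isomorphisms and field extensions. A minor point to keep track of is that $\EE$, being a rational function field in $n^2$ indeterminates (see \cref{rem:monte-carlo}), is large enough to contain all the denominators produced when one solves $\frX=uw_0b$, which it is.
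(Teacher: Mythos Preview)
Your proof is correct and follows essentially the same route as the paper: Bruhat-decompose $\frX = u w_0 b$, invoke invariance under $B$ to reduce to $u w_0$, argue that the unipotent factor $u$ is generic in $U$, and then identify $\Delta_{u w_0}$ with $\Delta_{\frU w_0}$. The only difference is in how genericity of $u$ is established---the paper uses a direct transcendence-degree count (since $\FF(\frX)$ has transcendence degree $n^2$ and $u$, $b$ contribute at most $\binom{n}{2}$ and $n^2-\binom{n}{2}$ respectively, both bounds are attained), whereas you phrase the same fact via the isomorphism $U\times B \xrightarrow{\sim} Bw_0B$ and the pullback of generic points; these are equivalent and equally short.
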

\begin{proof}
	First, we note that $\Delta_{\frU w_0}(S) = \Delta_{u w_0}(S)$ for \emph{any} unipotent upper triangular matrix $u$ whose strictly super-diagonal entries are algebraically independent over $\FF$.
	Now, the matrix $w_0 \frX$ has an LU-decomposition $w_0 \frX = a b$ with $a$ lower triangular unipotent and $b \in B$.
	Rewriting that equality gives $\frX = u w_0 b$, where $u \coloneqq w_0 a w_0$ is unipotent upper triangular; here we use that $w_0=w_0^{-1}$ is an involution.
	\cref{thm:invariance upper triangular} then implies that $\Delta_{\frX}(S) = \Delta_{u w_0}(S)$.

	As $u$ is unipotent upper triangular, the transcendence degree of the field extension $\FF(u) \supseteq \FF$ is at most $\tfrac{1}{2}n(n-1)$.
	Similarly, $b$ is lower triangular, and so the transcendence degree of $\FF(b) \supseteq \FF$ is at most $n^2-\tfrac{1}{2}n(n-1)$.
	Now the transcendence degree of $\FF(\frX)\supseteq\FF$ was assumed to equal $n^2$, and the matrix $w_0$ is a $0/1$-matrix.
	Consequently, the equality $\frX = u w_0 b$ yields that the transcendence degrees of the two field extensions, $\FF(u) \supseteq \FF$ and $\FF(b) \supseteq \FF$, both attain their respective maximum.
	In particular, the strictly super-diagonal entries of $u$ are algebraically independent over $\FF$.
	So by the initial reasoning, $\Delta_{\frU w_0}(S) = \Delta_{u w_0}(S) = \Delta_\frX(S)$.
\end{proof}

\begin{example}\label{exmp:w0}
	We continue Example \ref{exmp:algo}, where $\FF=\GF{2}$ and $\EE = \FF(\frX)$.
	The matrix representation of the longest element $w_0$ is the matrix with ones along the anti-diagonal.
	Multiplying the unipotent matrix $\frU$ of size $4{\times}4$ by $w_0$ gives us
	\[
	\fboxsep=0pt
	\frU w_0 \ = \
	\begin{psmallmatrix}
		x_{14} & x_{13} & x_{12} & 1 \\
		x_{24} & x_{23} & 1 & 0 \\
		x_{34} & 1 & 0 & 0 \\
		1 & 0 & 0 & 0
	\end{psmallmatrix} \enspace.
	\]
	As in Example \ref{exmp:algo}, we let $S=\{\sigma,\tau\}=\{12,23\}$ and obtain
	\[
	\fboxsep=0pt
	(\frU w_0)^{\wedge S} \ = \
	\begin{psmallmatrix}
		x_{13} x_{24} + x_{14} x_{23}  & x_{12} x_{24} + x_{14} & x_{24} & x_{12} x_{23} + x_{13} & x_{23} & 1 \\
		x_{23} x_{34} + x_{24} & x_{34} & 0 & 1 & 0 & 0
	\end{psmallmatrix}\enspace.
	\]
	This $2{\times}6$-matrix has the row echelon form
	\begin{equation}\label{eq:w0:row-echelon-form}
		\renewcommand\strut{\vphantom{x^1_1}}
		\begin{psmallmatrix}
			x_{13} x_{24} + x_{14} x_{23} & x_{12} x_{24} + x_{14}                                                                                             & x_{24}                                                        & x_{12} x_{23} + x_{13}                                                                                            & x_{23}                                                        & 1                                                  \\[3.5pt]
			0                             & \substack{x_{12} x_{23} x_{24} x_{34} + x_{12} x_{24}^{2} \strut \\ + x_{13} x_{24} x_{34} + x_{14} x_{24} \strut} & \substack{x_{23} x_{24} x_{34} \strut \\ + x_{24}^{2} \strut} & \substack{x_{12} x_{23}^{2} x_{34} + x_{12} x_{23} x_{24} \strut \\ + x_{13} x_{23} x_{34} + x_{14}x_{23} \strut} & \substack{x_{23}^{2} x_{34} \strut \\ + x_{23} x_{24} \strut} & \substack{x_{23} x_{34} \strut \\ + x_{24} \strut}
		\end{psmallmatrix},
	\end{equation}
	again, the leading independent columns are indexed by $12$ and $13$.
	Hence, we obtain the same shifted hypergraph from Example \ref{exmp:algo}, that is $\Delta(S) =\Delta_{\frU w_0}(S) = \{12,13\}$.
	Comparing the matrices \eqref{eq:algo:row-echelon-form} and \eqref{eq:w0:row-echelon-form} demonstrates that \cref{prop:w0} yields a considerable computational advantage over the naive method of computing full shifts.
\end{example}

Recall the Bruhat decomposition from \eqref{eq:bruhat}.
The algebraic shift $\Delta(S) = \Delta_{\frX}(S)$ of $S \in \binom{[n]}{k}$
is the shift by a generic matrix $\frX$ in the open Bruhat cell $B w_0 B$.

\begin{definition}
	\label{def:partial-w-shift}
	For $w \in \SymmetricGroup{n}$ and $S \in \binom{[n]}{k}$, the \emph{partial shift} of $S$ with respect to the permutation $w$
	is the shift $\Delta_g(S)$ for a generic element $g \in BwB$.
\end{definition}

We stress that the partial shift of $S$ with respect to $w \in \SymmetricGroup{n}$
is not the partial shift $\Delta_w(S)$ of $S$ with respect to the permutation matrix of $w$.

We will show that this definition is indeed independent of the chosen generic element $g$; see \cref{thm:partial shift independent of rep}.
The Bruhat cell of a matrix $g \in \GL(n, \EE)$ is determined by the vanishing of certain minors of $g$.
For example, $g$ lies in the cell $B w_0 B$ if all leading principal minors of $w_0 g$ are nonzero,
$g$ lies in $B w_0 s_i B$ if precisely the leading principal $i$-minor vanishes etc.; see \cref{tab:bruhat minors}.
The correspondence between the permutation $w$ such that $g \in BwB$ and the vanishing condition on the minors of $g$ is not obvious,
and a simple description of a generic element of $B w B$ does not arise from this immediately.
Developing the latter is the aim of the following subsection.
\begin{table}
	\centering
	\(
	\begin{array}{c|cccc}
		\toprule
		w       & g_{1;1} & g_{12;12} & g_{13;12} & g_{12;13} \\ \midrule
		e       & \neq 0  & \neq 0    & *         & *         \\
		s_1     & = 0     & \neq 0    & *         & *         \\
		s_2     & \neq 0  & = 0       & \neq 0    & \neq 0    \\
		s_1 s_2 & = 0     & = 0       & \neq 0    & = 0       \\
		s_2 s_1 & = 0     & = 0       & = 0       & \neq 0    \\
		w_0     & = 0     & = 0       & = 0       & = 0       \\ \bottomrule
	\end{array}
	\)
	\caption{For a matrix $g \in \GL(n, 3)$, we have $g \in B w_0 w B$ for $w$ depending on the (non-)vanishing of the minors $g_{I;J}$ of $g$.
		Here $w_0=s_1s_2s_1=s_2s_1s_2$.
	}
	\label{tab:bruhat minors}
\end{table}

\begin{remark}
	\label{rmk:partial-shift-gin}
	In the light of \cref{rmk:gin}, we can rephrase \cref{def:partial-w-shift} as follows.
	The partial shift of $S$ with respect to $w \in \SymmetricGroup{n}$ is the complex $L$ with $I_{L} = \Gin^w(I_S)$,
	where the \emph{$w$-generic initial ideal} of $I_S$ is given by $\Gin^w(I_S) = \In(g\cdot I_S)$
	for a generic element $g \in BwB$.
\end{remark}

\subsection*{Minimal representatives}
Let $g \in \GL(n, \EE)$ be a matrix with Bruhat decomposition $g = b w b'$,
where $b, b' \in B$, and $w \in \SymmetricGroup{n}$.
The matrices $b$ and $b'$ are not uniquely determined by $g$ in general; for example,
\[
\begin{psmallmatrix}
	1 & x & y \\
	& 1 & z \\
	&   & 1
\end{psmallmatrix}
\begin{psmallmatrix}
	1 &   &   \\
	&   & 1 \\
	& 1 &
\end{psmallmatrix}
\ = \
\begin{psmallmatrix}
	1 & y & x \\
	0  & z & 1 \\
	0  & 1 & 0
\end{psmallmatrix}
\ = \
\begin{psmallmatrix}
	1 & 0 & 0 \\
	& 1 & z \\
	&   & 1
\end{psmallmatrix}
\begin{psmallmatrix}
	1 &   &   \\
	&   & 1 \\
	& 1 &
\end{psmallmatrix}
\begin{psmallmatrix}
	1 & y & x \\
	& 1 & 0 \\
	&   & 1
\end{psmallmatrix} \enspace.
\]
In the light of \cref{thm:invariance upper triangular}, we want to move as \enquote{much} of $b$ as possible to $b'$.
More precisely, our goal for this section is to find, for given $w\in\SymmetricGroup{n}$, a smallest possible set $U(w) \subseteq U$ such that $B w B = U(w) w B$.
It turns out that the entries of $w$ that can be moved to the right are in correspondence with the inversions of $w$.
For $w \in \SymmetricGroup{n}$, let
\[
\inv w \ \coloneqq \ \Set{(i,j);t\text{$i < j$ and $i \cdot w > j \cdot w$}} \enspace.
\]
be the set of \emph{inversions} of $w$.
Note that in~\cite{BjornerBrenti:2005} the notation ``$\operatorname{inv} w$'' is used for the cardinality of the set $\inv w$.
The set of inversions $\inv w$ is equivalent to what is called an ``inversion table'' in~\cite{Stanley:EC1}, and so $\inv(w)$ determines the permutation $w$~\cite[Proposition 1.3.9]{Stanley:EC1}.
For $w \in \SymmetricGroup{n}$, let
\[
U(w) \ \coloneqq \ \Set{u \in U; u_{ij} = 0 \ \text{for all}\ (i, j) \notin \inv w } \enspace.
\]

\begin{remark}\label{rem:length}
	The \emph{length} $\ell(w)$ of a $w \in \SymmetricGroup{n}$ is the minimal length of a word expressing $w$ in terms of the simple transpositions $s_1=(1 \ 2),\dotsc,s_{n-1}=(n{-}1\ n) \in \SymmetricGroup{n}$.
	With this notation we have $\ell(w) = \abs{\inv w} = \abs{U(w)}$ for all $w \in \SymmetricGroup{n}$.

	The length function gives rise to the graded order $\abs{\SymmetricGroup{n}}_q \coloneqq \sum_{w \in \SymmetricGroup{n}} q^{\ell(w)}$.
	For $n \in \NN$, let $[n]_q \coloneqq \frac{1-q^n}{1-q} = q^0 + \dotsb + q^{n-1}$ and $[n]_q! \coloneqq [n]_q \dotsm [1]_q$.
	Then $\abs{\SymmetricGroup{n}}_q = [n]_q!$.

	Because $\SymmetricGroup{n}$ is a finite Coxeter group, the symmetric group has a unique element of maximal length,
	which is precisely the order-reversing permutation $w_0$; see~\cite[\S2.3]{BjornerBrenti:2005}.
	The Bruhat cell $B w_0 B$ in the Bruhat decomposition of $\GL(n,\EE)$ is an open and dense subset in the Zariski topology.
\end{remark}

In \cref{prop:w0} we already saw that the partial shift by $\frU w_0$ agrees with the full shift.
More generally, we will see that the length of a permutation $w$ indicates how close to the full shift the partial shift with respect to $w$ is; e.g., see \cref{thm:n-cycle} below.

\begin{lemma}
	\label{thm:double coset representatives}
	For all $w \in \SymmetricGroup{n}$, we have
	\begin{enumerate}
		\item $U w B = U(w) w B$, and \label{thm:double coset representatives:1}
		\item If $\EE$ is finite, then $U(w)$ has minimal cardinality with that property. \label{thm:double coset representatives:2}
	\end{enumerate}
\end{lemma}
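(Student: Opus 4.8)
The plan is to collapse the double coset $BwB$ down to $U(w)wB$ through a short chain of group-theoretic identities for the first statement, and then to read off the minimality in the second statement from the uniqueness inherent in the Bruhat decomposition \eqref{eq:bruhat}. For the first statement, I would begin by identifying $U(w)$ intrinsically: by \eqref{eq:right action Sn}, a unitriangular matrix $u$ satisfies $u_{ij}=0$ for all $(i,j)\notin\inv w$ exactly when $w^{-1}uw$ is lower unitriangular, that is, $U(w)=U\cap wU^-w^{-1}$, where $U^-$ denotes the lower unitriangular subgroup. Since $\inv w$ is transitively closed — if $i\cdot w>j\cdot w$ and $j\cdot w>k\cdot w$ then $i\cdot w>k\cdot w$ — the set $U(w)$ is a subgroup of $U$; applying the same observation to the complement of $\inv w$ shows that $U^w\coloneqq U\cap wUw^{-1}$ is a subgroup, and every $u\in U$ factors as $u=u_1u_2$ with $u_1\in U(w)$ and $u_2\in U^w$ (this factorization $U=U(w)\cdot U^w$ is classical and can also be verified directly by an elementary computation with unitriangular matrices). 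Granting this, the chain of equalities is $BwB=UTwB=Uw(w^{-1}Tw)B=UwB$, using $B=UT$ and that the torus $T$ is normalized by the permutation matrix $w$ and is contained in $B$; and for any $g=uwb\in UwB$ we write $u=u_1u_2$ as above, so that $g=u_1u_2wb=u_1w(w^{-1}u_2w)b=u_1w(u_2'b)$ with $u_2'\coloneqq w^{-1}u_2w\in U$, whence $g\in U(w)wB$. The reverse inclusion is immediate from $U(w)\subseteq U$.

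For the second statement, I would first show that $u\mapsto uwB$ is injective on $U(w)$: if $u_1wB=u_2wB$ with $u_1,u_2\in U(w)$, then $u\coloneqq u_1^{-1}u_2$ again lies in the subgroup $U(w)$ and satisfies $w^{-1}uw\in B$; but $w^{-1}uw\in U^-$ by the description of $U(w)$ above, and $U^-\cap B=\{1\}$ since a matrix that is simultaneously lower unitriangular and upper triangular must be the identity. Hence $u=1$, so $u_1=u_2$. It follows that $BwB=U(w)wB$ is the disjoint union of exactly $\abs{U(w)}$ cosets of $B$, a finite number — namely $\abs{\EE}^{\ell(w)}$ — since $\EE$ is finite. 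Now if $V$ is any set with $VwB=UwB$, then $\{vwB\mid v\in V\}$ is precisely this set of $\abs{U(w)}$ cosets of $B$, and the surjectivity of $v\mapsto vwB$ onto it forces $\abs{V}\ge\abs{U(w)}$; as $U(w)$ itself attains this bound, it has minimal cardinality.

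The step that takes genuine care is the first statement, and within it the factorization $U=U(w)\cdot U^w$ together with the bookkeeping around the conventions: under the right action \eqref{eq:right action Sn} one must verify that precisely the entries of $u$ indexed by non-inversions of $w$ can be absorbed into the Borel factor on the right, which is exactly why $\inv w$, and not its complement, parameterizes $U(w)$. Once this is settled, the second statement reduces to the short counting argument above, resting only on the identity $U^-\cap B=\{1\}$.
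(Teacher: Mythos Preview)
Your proof is correct, but both parts take a different route from the paper's.

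For part~(1), the paper argues by explicit manipulation with elementary matrices: for each non-inversion $(k,l)\notin\inv w$ it right-multiplies $u$ by $e_{kl}(-u_{kl})$ to kill the $(k,l)$-entry, and observes that the compensating factor $w^{-1}e_{kl}(u_{kl})w=e_{k\cdot w,\,l\cdot w}(u_{kl})$ lands in $U$ precisely because $(k,l)$ is a non-inversion. Iterating over all non-inversions yields $uw\in U(w)wU$. Your argument is the structural version of the same idea: you identify $U(w)=U\cap wU^-w^{-1}$ and $U^w=U\cap wUw^{-1}$ and invoke the factorization $U=U(w)\cdot U^w$. This is cleaner to state, but you cite the factorization as ``classical''; the paper's elementary-matrix computation is in effect a self-contained proof of that factorization, so its argument is more explicit where yours is more conceptual.

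For part~(2), the paper does a global count: it sums $\lvert U(w)\rvert\cdot\lvert B\rvert=q^{\ell(w)}\lvert B\rvert$ over all $w$ and checks that the total equals $\lvert\GL(n,\GF q)\rvert$, so the decomposition $\GL=\coprod_w U(w)wB$ must use exactly $\lvert U(w)\rvert$ cosets of $B$ in each cell. Your argument is local and more direct: you show the map $u\mapsto uwB$ is injective on $U(w)$ via $U^-\cap B=\{1\}$, which immediately pins down the number of $B$-cosets in $BwB$ as $\lvert U(w)\rvert$ and forces the lower bound on any competing set $V$. Your approach avoids the $q$-factorial bookkeeping and makes the minimality transparent; the paper's count has the side benefit of recovering the order of $\GL(n,\GF q)$ from the Bruhat decomposition.
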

\begin{proof}
	To prove \proofref{thm:double coset representatives:1}, let $u \in U$.
	We will construct matrices $u' \in U(w)$ and $u'' \in U(w^{-1})$ such that $u w = u' w u''$.
	For $\gamma \in \EE$ and $k \neq l$, let $e_{kl}(\gamma)$ be the elementary matrix
	with entries $e_{kl}(\gamma) = \delta_{ij} + \gamma\delta_{ik}\delta_{lj}$.
	Clearly, $e_{kl}(\gamma)^{-1} = e_{kl}(-\gamma)$, and by \eqref{eq:right action Sn}, we have $w^{-1} e_{kl}(\gamma) w = e_{k \cdot w, l \cdot w}(\gamma)$.
	For $k < l$, define the matrices $u' \coloneqq u \: e_{kl\mathstrut}(-u_{kl})$ and $u'' \coloneqq e_{k \cdot w, l \cdot w}(u_{kl})$.
	Since $u, e_{kl}(-) \in U$, we also have $u' \in U$.
	Furthermore, we have $u'_{kl} = 0$, and if $(k,l) \notin \inv w$, then $u'' \in U$.
	We obtain
	\[
	  u' \: w \: u''
	\ =\ u \: e_{kl\mathstrut}(-u_{kl}) \: w \: e_{k \cdot w, l \cdot w}(u_{kl})
	\ =\ u \: e_{kl\mathstrut}(-u_{kl}) \: w \: w^{-1} \: e_{kl\mathstrut}(u_{kl}) \: w
	\ =\ u \: w.
	\]
	as desired.
	Repeating this for all $(k < l) \notin \inv w$ proves \proofref{thm:double coset representatives:1}.
	In particular, one gets Bruhat decomposition $\GL(m, \EE) = \coprod_{w \in \SymmetricGroup{n}} U(w) w B$,
	with $B = TU$.

	To prove \proofref{thm:double coset representatives:2}, assume that $\EE \cong \GF{q}$.
	It is immediate from the definition of $U(w)$ that $\abs{U(w)} = q^{\ell(w)}$.
	Furthermore, it is standard that
	\begin{equation*}
		\abs{U} = q^{\frac{1}{2}n(n-1)},\qquad
		\abs{T} = (q-1)^n,\qquad
		\abs{\GL(n, \GF{q})} = \prod_{k=0}^{n-1}(q^n-q^k).
	\end{equation*}
	Then
	\begin{multline*}
			\smashoperator{\sum_{w \in \SymmetricGroup{n}}} \abs{U(w)} \abs{B}
			\ = \ \smashoperator{\sum_{w \in \SymmetricGroup{n}}} q^{\ell(w)} \abs{T} \abs{U}
			\ = \ \biggl(\prod_{k=1}^{n} \frac{q^k-1}{q-1}\biggr) (q-1)^n q^{\frac{1}{2}n(n-1)} \\
			\ = \ \biggl(\prod_{k=1}^{n} (q^k-1)\biggr) q^{\frac{1}{2}n(n-1)}
			\ = \ \prod_{k=1}^{n} (q^k-1)q^{n-k-1}
			\ = \ \prod_{k=0}^{n-1}(q^n-q^k)\\
			\ = \ \abs{\GL(n, \GF{q})} \enspace.
	\end{multline*}
	Together with $\GL(m, \EE) = \coprod_{w \in \SymmetricGroup{n}} U(w) w B$, this proves \proofref{thm:double coset representatives:2}.
\end{proof}

\begin{corollary}
	\label{thm:partial shift independent of rep}
	If $g, g'$ are generic elements of the Bruhat cell $BwB$, then $\Delta_g(S) = \Delta_{g'}(S)$ for all $S$.
\end{corollary}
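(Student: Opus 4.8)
The plan is to reduce to generic elements of the unipotent group $U(w)$ and then to run the classical argument showing that the generic shift does not depend on the choice of generic matrix.

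First I would note that $BwB = U(w)wB$: by \cref{thm:double coset representatives} we have $UwB = U(w)wB$, and since $B = TU$ with the torus $T$ normalising $U(w)$ as a set (conjugation by a diagonal matrix preserves the support $\inv w$), it follows that $BwB = TUwB = TU(w)wB = U(w)TwB = U(w)w(w^{-1}Tw)B = U(w)wB$. Thus any $g \in BwB$ can be written $g = \hat u\, w\, b$ with $\hat u \in U(w)$ and $b \in B$, and this $\hat u$ is unique because the multiplication map $U(w) \times B \to BwB$ is a bijection (standard Bruhat theory; cf. the cardinality count in the proof of \cref{thm:double coset representatives}). Genericity of $g$ in $BwB$ is to be read through this parametrisation — equivalently, the assignment $g \mapsto \hat u$ is a dominant morphism of $\FF$-varieties $BwB \to U(w)$ — so a generic $g$ has a $\hat u$ whose $\abs{\inv w}$ entries supported on $\inv w$ are algebraically independent over $\FF$. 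By \cref{thm:invariance upper triangular}, $\Delta_g(S) = \Delta_{\hat u w b}(S) = \Delta_{\hat u w}(S)$. Hence it suffices to prove: if $\hat u, \hat u' \in U(w)$ both have $\inv w$-entries algebraically independent over $\FF$, then $\Delta_{\hat u w}(S) = \Delta_{\hat u' w}(S)$; it costs nothing to permit $\hat u$ and $\hat u'$ to lie over different extensions of $\FF$.

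For this last step I would argue field-theoretically, exactly as for the well-definedness of the generic shift in~\cite[Theorem~2.1]{Kalai90}. Every entry of $(\hat u w)^{\wedge S}$ is a minor of $\hat u w$, hence a polynomial with integer coefficients in the $\abs{\inv w}$ free entries of $\hat u$ (here we use that $w$ is a $0/1$-matrix). Since those entries are algebraically independent over $\FF$, the subfield they generate is $\FF$-isomorphic to the rational function field $\FF(z_{ij} : (i,j) \in \inv w)$, via the map sending the $(i,j)$-entry of $\hat u$ to $z_{ij}$; the same holds for $\hat u'$, so composing the two isomorphisms yields a field isomorphism carrying $\hat u w$ to $\hat u' w$ entrywise, and hence $(\hat u w)^{\wedge S}$ to $(\hat u' w)^{\wedge S}$ entrywise. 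A field isomorphism preserves the rank of every submatrix, so the rank sequences of the column-initial blocks of the two matrices agree, and therefore $\Delta_{\hat u w}(S) = \Delta_{\hat u' w}(S)$ by \eqref{eq:rank-sequence-shift}.

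I do not anticipate a real obstacle: the proof is the usual genericity argument plus bookkeeping. The one delicate point is the first reduction — that ``generic in $BwB$'' must be interpreted via the parametrisation $BwB = U(w)wB$, so that precisely the $\abs{\inv w}$ parameters coming from $U(w)$ are the algebraically independent ones; this is exactly where \cref{thm:double coset representatives} enters. A more geometric alternative avoids the explicit reduction: for each shifted complex $L$ the locus $\{g \in BwB : \Delta_g(S) = L\}$ is constructible and defined over $\FF$, being cut out by vanishing and non-vanishing of minors of $g^{\wedge S}$; these loci partition the irreducible variety $BwB$, so exactly one of them contains a dense open set, and every generic $g$ lies in that one.
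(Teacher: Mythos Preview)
Your proposal is correct and follows essentially the same route as the paper: write $g = uwb$ with $u \in U(w)$ via \cref{thm:double coset representatives}, strip off $b$ using \cref{thm:invariance upper triangular}, and then observe that two choices of $u$ with algebraically independent $\inv w$-entries yield the same shift. You supply more detail than the paper on two points the paper leaves implicit --- the passage from $UwB = U(w)wB$ to $BwB = U(w)wB$ via $T$ normalising $U(w)$, and the field-isomorphism justification for why algebraically independent parameters give identical rank patterns --- and your closing constructibility argument is a legitimate alternative that the paper does not mention.
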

\begin{proof}
	\Cref{thm:double coset representatives:1} shows that $g = uwb$ and $g' = u'wb'$
	for $u, u' \in U(w)$ and $b, b' \in B$.
	By \cref{thm:invariance upper triangular}, $\Delta_g(S) = \Delta_{uw}(S)$, and analogously for $g'$.
	The entries $\Set{u_{ij}; (i < j) \in \inv w}$ are algebraically independent over $\FF$.
	Replacing these by the algebraically independent entries $\Set{u'_{ij}; (i < j) \in \inv w}$ does not alter the partial shift.
\end{proof}

Our further analysis will make use of special unipotent matrices.
The idea of the following definition is that $\frU(w) \in U(w)$ is for $w \in \SymmetricGroup{n}$
what $\frU \in U$ is for $w_0 \in \SymmetricGroup{n}$.
Recall that the coefficients of the matrix $\frX=(x_{ij})\in\EE^{n\times n}$ are chosen to be $\FF$-algebraic independent.
The following generalizes the definition of $\frU$ in \eqref{eq:u}:

\begin{definition}
	\label{def:u(w)}
	For $w \in \SymmetricGroup{n}$, define the $n \times n$-matrices $\frU(w) \in U(w)$ by
	\begin{equation*}
		\frU(w) \coloneqq
		\begin{psmallmatrix}
			1 & u_{12} & \cdots & u_{1n}    \\
			& \ddots & \ddots & \vdots    \\
			&        & 1      & u_{n-1,n} \\
			&        &        & 1
		\end{psmallmatrix}
		\quad \text{with} \quad
		u_{ij} = \begin{cases*}
			x_{ij} & if $(i,j) \in \inv w$,\\
			0 & otherwise,
		\end{cases*}
	\end{equation*}
	and let $\frR(w) \coloneqq \frU(w)w$.
\end{definition}
The matrix $\frR(w)$ can be viewed as a \emph{Rothe diagram} of $w$,
with a $1$ in position $(i, i\cdot w)$ for every $i$, and an $x_{ij}$ in position $(i, j\cdot w)$ for every $(i,j) \in \inv w$;
cf.~\cite[14]{Knuth:1998}.
Note that the matrix $\frU$ from \eqref{eq:u} is just $\frU = \frU(w_0)$.
If $w=s_i$ is a simple transposition, we have $\inv s_i=\{(i,i+1)\}$, and $\frU(w)$ is a transvection.
These transvections correspond to elementary row operations and generate the entire unipotent subgroup $U$.

\begin{remark}
	Recall from \cref{prop:w0} that $\Delta_{\frR(w_0)}(S) = \Delta_{\frX}(S)$.
	In this way, \cref{def:u(w)} generalizes $\frU$ to \enquote{less generic} cases.
	More precisely, the matrix $\frR(w)$ is generic within the Bruhat cell $U(w) w B = BwB$.
\end{remark}

As it turns out, the fixed matrix $\frU$, together with all the permutations, suffices to compute all partial shifts.
The following lemma is instrumental by providing a criterion for a map to preserve algebraic independence.

\begin{lemma}[{\cite[Theorem~2.2]{EhrenborgRota:1993}}]
	\label{thm:algebraic independence jacobian}
	If $b_1,\dotsc,b_n$ is a transcendendence basis of $\EE \supseteq \FF$,
	then the elements $p_1,\dotsc,p_n \in \EE$ are $\FF$-algebraically independent
	if and only if not all maximal minors of the Jacobian $(\partial p_j/\partial b_i)_{ij}$ are identically zero.
\end{lemma}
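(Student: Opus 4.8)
The plan is to translate the statement into the language of $\FF$-derivations of $\EE$. Put $K \coloneqq \FF(b_1,\dots,b_n) \subseteq \EE$; since $b_1,\dots,b_n$ is a transcendence basis, $\EE$ is algebraic over $K$, and in the situations where the lemma is applied this extension is separable (automatic when $\Char\FF = 0$ or when $\FF$ is perfect, e.g.\ a prime field; and in the application $\EE = \FF(\frX)$, so $\EE = K$ and nothing has to be extended). Under that hypothesis each partial derivative $\partial/\partial b_i$ on $K$ extends uniquely to an $\FF$-derivation $D_i\colon\EE\to\EE$; the $D_i$ then form an $\EE$-basis of $\operatorname{Der}_\FF(\EE)$, which has dimension $\operatorname{trdeg}(\EE/\FF)=n$, and $D_i b_j=\delta_{ij}$. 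The Jacobian of the statement is exactly the $n\times n$ matrix $(D_i p_j)_{ij}$, so ``not all maximal minors vanish'' just means this matrix is invertible over $\EE$.

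For the implication that a nonvanishing Jacobian forces $p_1,\dots,p_n$ to be algebraically independent — the direction actually used in the sequel — I would argue by contradiction. Choose a nonzero $F\in\FF[y_1,\dots,y_n]$ of minimal total degree with $F(p_1,\dots,p_n)=0$. Applying each $D_i$ and the chain rule for derivations gives
\[
\sum_{j=1}^n \Bigl(\tfrac{\partial F}{\partial y_j}\Bigr)(p_1,\dots,p_n)\, D_i p_j \;=\; 0 \qquad (i=1,\dots,n),
\]
so the vector $\bigl((\partial F/\partial y_j)(p_1,\dots,p_n)\bigr)_{j}$ lies in the kernel of $(D_i p_j)_{ij}$. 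If that matrix is invertible, then $(\partial F/\partial y_j)(p_1,\dots,p_n)=0$ for every $j$; since $\deg(\partial F/\partial y_j)<\deg F$, minimality of $F$ forces each $\partial F/\partial y_j$ to vanish identically. In characteristic $0$ this makes $F$ constant, contradicting $F\neq0$; in characteristic $p$ with $\FF$ perfect it gives instead $F=G^p$ with $G(p_1,\dots,p_n)=0$ and $\deg G<\deg F$, again contradicting minimality — which covers the prime-field case relevant here.

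For the converse, suppose $p_1,\dots,p_n$ are $\FF$-algebraically independent. Being $n=\operatorname{trdeg}(\EE/\FF)$ in number they form a transcendence basis, and (in characteristic zero) $\EE$ is separable over $\FF(p_1,\dots,p_n)$, i.e.\ $p_1,\dots,p_n$ is a \emph{separating} transcendence basis; by the standard correspondence between separating transcendence bases and bases of the module of Kähler differentials, $dp_1,\dots,dp_n$ form an $\EE$-basis of $\Omega_{\EE/\FF}$, which dually says precisely that $(D_i p_j)_{ij}$ is invertible. (Equivalently: if $\det(D_i p_j)=0$, some nonzero $\EE$-combination $\sum_i c_i D_i$ kills every $p_j$, hence all of $\FF(p_1,\dots,p_n)$, but a derivation of $\EE$ annihilating a transcendence basis is zero.) The step I expect to be the main obstacle is exactly this interplay with inseparability: already the existence of the operators $\partial p_j/\partial b_i$ on $\EE$ uses separability of $\EE$ over $\FF(b_1,\dots,b_n)$, and the $\Rightarrow$ direction in addition needs $\EE$ separable over $\FF(p_1,\dots,p_n)$ — automatic in characteristic zero but false in general (e.g.\ $p_1=b_1^p$ is algebraically independent over $\FF$ yet has vanishing Jacobian). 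The $\Leftarrow$ direction, which is the half the paper relies on, is the robust one and, over a perfect ground field, rests only on the chain-rule computation above.
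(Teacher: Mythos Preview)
The paper does not give its own proof of this lemma: it is quoted verbatim as \cite[Theorem~2.2]{EhrenborgRota:1993} and used as a black box in the proofs of \cref{thm:partial shift by u(w)} and \cref{thm:u(ws) and u(w)u(s)}. So there is nothing in the paper to compare your argument against.

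That said, your argument is the standard one and is correct under the separability hypotheses you spell out. Your observation that the Jacobian here is square, so that ``not all maximal minors vanish'' simply means $\det(D_ip_j)\neq 0$, is right for the lemma as stated. Your chain-rule/minimal-relation proof of the $\Leftarrow$ direction is exactly the classical Jacobian criterion, and your handling of characteristic~$p$ via $F=G^p$ over a perfect field is the usual fix. You are also right that the $\Rightarrow$ direction genuinely needs $\EE$ to be separable over $\FF(p_1,\dots,p_n)$ and can fail otherwise; the paper only ever invokes the $\Leftarrow$ direction, and always in situations where $\EE$ is a rational function field over a prime field, so your caveats do not affect the applications.
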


\begin{proposition}
	\label{thm:partial shift by u(w)}
	The partial shift of $S$ by $w$ is given by $\Delta_{\frR(w)}(S) = \Delta_{\frU w}(S)$.
\end{proposition}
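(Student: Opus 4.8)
The plan is to deduce the statement from what is already known about the Bruhat cell $BwB=U(w)wB$. The first equality is essentially definitional: by the remark following \cref{def:u(w)}, the matrix $\frR(w)=\frU(w)w$ is a generic element of $BwB$ (its only free entries, the $x_{ij}$ with $(i,j)\in\inv w$, are $\FF$-algebraically independent), so $\Delta_{\frR(w)}(S)$ is the partial shift of $S$ by $w$ in the sense of \cref{def:partial-w-shift}, by \cref{thm:partial shift independent of rep}. The content is thus $\Delta_{\frR(w)}(S)=\Delta_{\frU w}(S)$. Since $\frU\in U$, the reduction in the proof of \cref{thm:double coset representatives} yields $\frU'\in U(w)$ and $\frU''\in B$ with $\frU w=\frU'w\frU''$, so \cref{thm:invariance upper triangular} gives $\Delta_{\frU w}(S)=\Delta_{\frU'w}(S)$; and $\Delta_{\frR(w)}(S)=\Delta_{\frU(w)w}(S)$. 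Hence I must compare $\Delta_{\frU'w}(S)$ with $\Delta_{\frU(w)w}(S)$, where $\frU'$ and $\frU(w)$ are members of $U(w)$ agreeing outside the slots $\inv w$: in $\frU(w)$ these carry the independent variables $x_{ij}$, while in $\frU'$ they carry certain polynomials in all the $x_{cd}$, $c<d$.

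Assuming for the moment that the entries $\frU'_{ij}$, $(i,j)\in\inv w$, are algebraically independent over $\FF$, the two shifts agree by exactly the substitution argument of \cref{thm:partial shift independent of rep}: the map $x_{ij}\mapsto\frU'_{ij}$ extends to an $\FF$-isomorphism of the rational function fields $\FF(x_{ij}:(i,j)\in\inv w)$ and $\FF(\frU'_{ij}:(i,j)\in\inv w)$ which sends $\frU(w)$ entrywise to $\frU'$, hence $(\frU(w)w)^{\wedge S}$ to $(\frU'w)^{\wedge S}$, and since membership in $\Delta_g(S)$ is (by \cref{def:partial-g-shift}) determined by whether certain minors of submatrices of $g^{\wedge S}$ vanish — polynomial conditions on the free entries of $g$ — a field isomorphism of the coefficients preserves it. So everything hinges on the algebraic independence of the $\frU'_{ij}$, $(i,j)\in\inv w$.

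That is where \cref{thm:algebraic independence jacobian} comes in: taking $\{x_{cd}:c<d\}$ as a transcendence basis, it suffices to exhibit one nonvanishing maximal minor of the Jacobian $(\partial\frU'_{ij}/\partial x_{cd})$ (to match the lemma's hypotheses literally, one can pad the list $\{\frU'_{ij}\}$ with the $x_{cd}$, $(c,d)\notin\inv w$, $c<d$, to get $\binom n2$ elements). I would obtain the minor by tracking the reduction of \cref{thm:double coset representatives} modulo terms of total degree $\ge 2$ in the $x_{cd}$. Each elementary step there replaces the current matrix $u$ by $u\,e_{kl}(-u_{kl})$ for a non-inversion pair $(k<l)$; this changes only column $l$, sending $u_{il}\mapsto u_{il}-u_{kl}u_{ik}$ for $i\le k$, thereby zeroing $u_{kl}$ and perturbing each $u_{il}$, $i<k$, by $-u_{kl}u_{ik}$. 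A short induction shows every above-diagonal entry always has zero constant term as a polynomial in the $x_{cd}$, so every such perturbation has total degree $\ge 2$; and since an entry in a slot of $\inv w$ is never the one that gets zeroed and is only ever perturbed in this way, $\frU'_{ij}=x_{ij}+(\text{total degree}\ge 2)$ for $(i,j)\in\inv w$. Evaluated at $x=0$, the Jacobian block with rows and columns indexed by $\inv w$ is then the identity, so the minor is $\equiv 1$ modulo $(x_{cd})$ and not identically zero. The one delicate point I would verify is that this lowest-order computation for the inversion slots does not depend on the order in which the non-inversion pairs are processed — which holds precisely because those slots are perturbed only by degree-$\ge 2$ terms.
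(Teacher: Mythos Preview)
Your argument is correct and follows the same overall strategy as the paper: write $\frU w = \frU' w\,\frU''$ with $\frU'\in U(w)$ via \cref{thm:double coset representatives}, reduce to showing the inversion-slot entries $\frU'_{ij}$, $(i,j)\in\inv w$, are algebraically independent, and verify this via the Jacobian criterion \cref{thm:algebraic independence jacobian}.

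The difference is in how the Jacobian criterion is checked. The paper treats the single-step case $r=1$ explicitly, writing out the matrix $\frU'$ and its Jacobian, observing the columns are linearly independent, and then inducts on the number of removed non-inversions (each step preserving algebraic independence of \emph{all} remaining super-diagonal entries, which is slightly more than needed). Your degree-filtration argument is more direct: since every elementary step $u\mapsto u\,e_{kl}(-u_{kl})$ perturbs the entries in column $l$ by products of two strictly-above-diagonal entries (each of positive degree in the $x_{cd}$), the inversion-slot entries satisfy $\frU'_{ij}\equiv x_{ij}$ modulo degree $\ge 2$, so the relevant Jacobian minor has constant term $1$. This avoids both the explicit matrix display and the induction, and it makes transparent why the order in which non-inversions are processed does not matter. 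The paper's explicit computation has the advantage of being concrete enough that one can read off the full Jacobian, but your version is the cleaner route to the conclusion actually needed.
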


\begin{proof}
	By \cref{thm:double coset representatives:1}, we have $\frU w \in \frU' w B$ for a matrix $\frU' \in U(w)$;
	in particular, $\Delta_{\frU w(S)} = \Delta_{\frU' w}(S)$.
	To make the matrix $\frU'$ explicit,
	enumerate the set $\Set{(k < l) \notin \inv w; k, l \in [n]} \eqqcolon \{(k_1, l_1),\dotsc,(k_r, l_r)\}$.
	According to the proof of \cref{thm:double coset representatives:1},
	we have
	\[
	\frU \: w
	\ =\ \underbrace{\frU \prod_{s = 1}^r e_{k_s l_s}(-\gamma_s)}_{\eqqcolon \frU'}
	\: w \:
	\underbrace{\prod_{s = r}^1 e_{k_s \cdot w, l_s \cdot w}(\gamma_s)}_{\in U} \enspace,
	\]
	where $\gamma_1 \coloneqq x_{k_1 l_1}$ is the $(k_1, l_1)$-entry of $\frU$,
	and $\gamma_s$ is the $(k_s, l_s)$-entry of the matrix product $\frU \prod_{t = 1}^{s-1} e_{k_t l_t}(-\gamma_t)$ for $s > 1$.
	Let $p_{ij} \in \EE$ be the $(i,j)$-entry of $\frU'$.

	\begin{claim*}
		The non-zero super-diagonal entries $\Set{p_{kl}; (k < l) \in \inv w}$ of $\frU'$ are algebraically independent over $\FF$.
	\end{claim*}
	\begin{claimproof}
		Without loss of generality, let $r = 1$ and $(k, l) \coloneqq (k_1, l_1)$.
		In this case,
		\[
		\frU' = \begin{pNiceArray}[small]{*{11}{>{\color{gray}}c}}
			1 & x_{12} & \cdots             & x_{1,k-1} & x_{1,k}   & x_{1,k+1}   & \cdots             & x_{1,l-1}   & \color{black} x_{1,l} - x_{1,k}x_{k,l}     & x_{1,l+1}    & \cdots             \\
			& 1      & \cdots             & x_{2,k-1} & x_{2,k}   & x_{2,k+1}   & \cdots             & x_{2,l-1}   & \color{black} x_{2,l} - x_{2,k}x_{k,l}     & x_{2,l+1}    & \cdots             \\
			&        & \Ddots[color=gray] & \vdots    & \vdots    & \vdots      &                    & \vdots      & \vdots                                     & \vdots       &                    \\
			&        &                    & 1         & x_{k-1,k} & x_{k-1,k+1} & \cdots             & x_{k-1,l-1} & \color{black} x_{k-1,l} - x_{k-1,k}x_{k,l} & x_{ k-1,l+1} & \cdots             \\
			&        &                    &           & 1         & x_{k,k+1}   & \cdots             & x_{k,l-1}   & \color{black} 0                            & x_{ k,l+1}   & \cdots             \\
			&        &                    &           &           & 1           & \cdots             & x_{k+1,l-1} & x_{k+1,l}                                  & x_{ k+1,l+1} & \cdots             \\
			&        &                    &           &           &             & \Ddots[color=gray] & \vdots      & \vdots                                     & \vdots       & \Ddots[color=gray]
		\end{pNiceArray},
		\]
		where the gray entries are the same as in $\frU$.
		Ordering the expressions $p_{ij}$ by the indices $(i,j)$ such that $(i,j) \preceq (i',j')$ if $j < j'$, or $j = j'$ and $i \geq i'$,
		we obtain that the Jacobian of the entries $p_{ij}$ (except for $p_{kl} = 0$) with respect to the $x_{ij}$ is
		\[
		\left(\frac{\partial p_{jj'}}{\partial x_{ii'}}\right)_{ii';jj'} =
		\begin{NiceArray}[small,nullify-dots,margin]{c @{\hspace{.75em}} ccc wc{7em} *{10}{c} @{\hspace{.75em}} c}
			& p_{12} & p_{23} & p_{13} & \Cdots & & & & & p_{k+1,l} & p_{k-1,l}  & \Cdots & p_{1,l}  & p_{l,l+1} & \cdots & \\
			x_{12}    & 1      &        &        &        & & & & &           &            &        &          &           &        & \\
			x_{23}    &        & 1      &        &        & & & & &           &            &        &          &           &        & \\
			x_{13}    &        &        & 1      &        & & & & &           &            &        &          &           &        & \\
			\Vdots    &        &        &        & \Ddots & & & & &           &            &        &          &           &        & \\
			&        &        &        &        & & & & &           & 1          &        &          &           &        & x_{k-1,k} \\
			&        &        &        &        & & & & &           &            & \Ddots &          &           &        & \Vdots \\
			&        &        &        &        & & & & &           &            &        & 1        &           &        & x_{1,k} \\
			&        &        &        &        & & & & &           &            &        &          &           &        & \\
			x_{k+1,l} &        &        &        &        & & & & &  1        &            &        &          &           &        & \\
			\Vdots    &        &        &        &        & & & & &           & -p_{k-1,k} & \cdots & -p_{1,k} &           &        & x_{k,l}\\
			&        &        &        &        & & & & &           & 1          &        &          &           &        & x_{k-1,l} \\
			&        &        &        &        & & & & &           &            & \Ddots &          &           &        & \Vdots   \\
			&        &        &        &        & & & & &           &            &        & 1        &           &        & x_{1,l}  \\
			&        &        &        &        & & & & &           &            &        &          & 1         & {}     & x_{l,l+1}\\
			&        &        &        &        & & & & &           &            &        &          &           & \Ddots & \Vdots
			\CodeAfter
			\SubMatrix{()}{2-2}{16-15}{)}
		\end{NiceArray}.
		\]
		The above matrix has linearly independent columns.
		Since $x_{12},\dotsc,x_{n-1,n}$ is a transcendence basis of $\EE \supseteq \FF$,
		\cref{thm:algebraic independence jacobian} implies that $p_{12}, \dotsc, \hat{p}_{kl},\dotsc,p_{n-1,n}$ are algebraically independent,
		where $\hat{p}_{kl}$ is omitted.
		The claim (for general $r$) now follows inductively.
	\end{claimproof}

	By the same argument as in the proof of \cref{prop:w0},
	$\Delta_{\frR(w)}(S) = \Delta_{u w}(S)$ for \emph{any} matrix $u \in U(w)$
	whose entries $\Set{u_{ij}; (i < j) \in \inv w}$ are algebraically independent.
	In particular, $\Delta_{\frR(w)}(S) = \Delta_{\frU' w}(S) = \Delta_{\frU w}(S)$.
\end{proof}

The following example shows that the genericity assumption in \cref{thm:partial shift independent of rep} is crucial.
\begin{example}\label{exmp:vandermonde}
  In personal communication, Gil Kalai raised the question about the behavior of partial shifting $\Delta_\frM(-)$ with respect to the Vandermonde matrix
  \begin{equation}\label{eq:vandermonde}
    \frM \ = \ \begin{pmatrix}
      x_1    & \cdots & x_n    \\
      \vdots &        & \vdots \\
      x_1^n  & \cdots & x_n^n
    \end{pmatrix}
  \end{equation}
  for $\FF$-algebraically independent $x_1,\dotsc,x_n \in \EE$.
  It turns out that $\frM$ lies in the $Bw_0B$-cell of $\GL(n, \EE)$, but the transcendence degree of $\FF(\frM)$ is only $n$.
  So $\frM$ does not satisfy the assumptions of \cref{thm:partial shift independent of rep}.
  In fact, the $3$-uniform hypergraph $S=\{123, 145, 246, 356\}$ shows that for $\FF = \QQ$,
  we have $\Delta(S) = \{123, 124, 125, 126\}$, while $\Delta_\frM(S) = \{123, 124, 125, 134\}$.
  Notably, $\Delta_\frM(S) \neq \Delta_{\frR(w)}(S)$ for \emph{all} $w \in \SymmetricGroup{6}$.
\end{example}

The columns of the Vandermonde matrix $\frM$ in \eqref{eq:vandermonde} form points on the moment curve in $\FF(\frM)^n$.
If $\FF$ is an ordered field such as, e.g., $\FF=\QQ$, then $\FF(\frM)$ can be equipped with an ordering, too.
In that case, the convex hull of finitely many points on the moment curve is known as a cyclic polytope.
Murai \cite{Murai07} investigated the full (exterior and symmetric) shifts of the (simplicial) boundary complexes of those polytopes.

\subsection*{The unipotent matrices \texorpdfstring{\boldmath$\frR(w)$}{𝔯(w)}}
In this section, we collect properties of the matrices $\frR(w)$.
Specifically, we want to use this information to understand how $\frR(v)$ and $\frR(w)$ are related to $\frR(vw)$.
The central result is \cref{thm:u(ws) and u(w)u(s)},
which will be used in \cref{sec:partial-shift-graph} to define the \emph{partial shift graph} and prove its acyclicity.
For the following lemma, recall that we let $\SymmetricGroup{n}$ act on $[n]$ from the right. The symbol $\symdiff$ denotes the symmetric difference.

\begin{lemma}
	\label{thm:inv(vw)}
	We have $\inv vw = \inv v \symdiff (\inv w) \cdot v^{-1}$ for every $v$, $w \in \SymmetricGroup{n}$.
	In particular, $\ell(vw) = \ell(v) + \ell(w)$ if and only if $\inv v \cap (\inv w) \cdot v^{-1} = \emptyset$,
	and in this case, $\inv vw = \inv v \cup (\inv w) \cdot v^{-1}$.
\end{lemma}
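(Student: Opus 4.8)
The plan is to reinterpret the inversion set of a permutation as a set of two-element subsets of $[n]$, and to observe that ``reversing a given pair'' behaves additively modulo $2$ under composition. Concretely, identify $\inv w$ with the collection of two-element subsets $\{i,j\} \subseteq [n]$ (with $i < j$) for which $i \cdot w > j \cdot w$, and let $\SymmetricGroup{n}$ act on such subsets by $\{i,j\} \cdot v \coloneqq \{i \cdot v,\, j \cdot v\}$; this is the action meant by the notation $(\inv w) \cdot v^{-1}$. Say that $v$ \emph{reverses} a pair $\tau = \{a,b\}$ with $a < b$ if $a \cdot v > b \cdot v$, so that $\tau \in \inv v$ exactly when $v$ reverses $\tau$. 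Since the action is on the right, $vw$ acts on $\tau$ by first applying $v$ and then applying $w$ to $\tau \cdot v$; and a composition of the two steps $\tau \mapsto \tau \cdot v$ and $\tau \cdot v \mapsto \tau \cdot vw$ reverses the relative order of $a$ and $b$ precisely when exactly one of the two steps does. Hence $vw$ reverses $\tau$ if and only if exactly one of the conditions ``$v$ reverses $\tau$'' and ``$w$ reverses $\tau \cdot v$'' holds.

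Next I would translate these two conditions into membership statements: the first is $\tau \in \inv v$, and since $\tau \cdot v \in \inv w$ is equivalent to $\tau \in (\inv w) \cdot v^{-1}$ (apply the bijection $\cdot\, v^{-1}$ to both members of $\tau \cdot v$), the second is $\tau \in (\inv w) \cdot v^{-1}$. Therefore $\tau \in \inv vw$ if and only if $\tau$ belongs to exactly one of $\inv v$ and $(\inv w) \cdot v^{-1}$, which is precisely the identity $\inv vw = \inv v \symdiff (\inv w) \cdot v^{-1}$.

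For the ``in particular'' statement, I would use that $\cdot\, v^{-1}$ is a bijection of the set of two-element subsets of $[n]$, so $\abs{(\inv w) \cdot v^{-1}} = \abs{\inv w} = \ell(w)$; together with $\abs{\inv v} = \ell(v)$, the equality $\ell(vw) = \abs{\inv vw}$ from \cref{rem:length}, and the elementary identity $\abs{A \symdiff B} = \abs{A} + \abs{B} - 2\abs{A \cap B}$, this yields $\ell(vw) = \ell(v) + \ell(w) - 2 \abs{\inv v \cap (\inv w) \cdot v^{-1}}$. Hence $\ell(vw) = \ell(v) + \ell(w)$ holds if and only if $\inv v \cap (\inv w) \cdot v^{-1} = \emptyset$, and in that case the symmetric difference of the two disjoint sets equals their union, giving $\inv vw = \inv v \cup (\inv w) \cdot v^{-1}$.

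I do not expect a genuine obstacle here; the one point that requires care is the bookkeeping with the right-action convention, so that $vw$ genuinely means ``first $v$, then $w$'' on $[n]$ and hence on pairs, together with the equivalence $\tau \cdot v \in \inv w \iff \tau \in (\inv w) \cdot v^{-1}$. The content is the standard $1$-cocycle property of inversion sets.
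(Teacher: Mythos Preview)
Your argument is correct: treating inversions as two-element subsets and noting that ``reversing a pair'' composes additively modulo $2$ is exactly the standard cocycle computation, and your derivation of the length identity from $\abs{A \symdiff B} = \abs{A} + \abs{B} - 2\abs{A \cap B}$ is clean. Your care in interpreting $(\inv w)\cdot v^{-1}$ via the action on unordered pairs is warranted, since the literal action on ordered pairs $(a,b)\mapsto (a\cdot v^{-1}, b\cdot v^{-1})$ need not preserve the condition that the first coordinate is smaller; the paper's later use of the lemma (restricting matrices to index sets such as $\frX|_{(\inv w)\cdot v^{-1}}$) only works under this unordered reading.

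As for comparison: the paper does not supply a proof of this lemma at all. It states the result, adds the remark that $\ell(vw)=\ell(v)+\ell(w)$ is equivalent to $v \leq vw$ in the right weak order with a reference to \cite[Prop.~3.1.2(ii)]{BjornerBrenti:2005}, and moves on---so it treats the identity as standard. Your write-up therefore fills in what the paper leaves to the reader, and does so by the expected route.
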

The condition $\ell(vw) = \ell(v) + \ell(w)$ is equivalent to $vw$ being larger or equal to $v$ in the right weak order on $\SymmetricGroup{n}$; see~\cite[Prop.\ 3.1.2(ii)]{BjornerBrenti:2005}.
	For an $n \times n$-matrix $m$ and $J \subseteq [n] \times [n]$,
	let $m|_J$ be obtained from $m$ by setting the entries $m_{ij}$ with $(i,j) \notin J$ to zero.
	Note that
	\begin{equation}\label{eq:u(w)-res}
		\frU(w) = e + \frX|_{\inv w}.
	\end{equation}
	For a permutation $w \in \SymmetricGroup{n}$ and a matrix $m$ with entries in $\FF[x_{ij} \mid 1 \leq i,j \leq n]$,
	let $m^w$ be the matrix defined entry-wise by $(x_{ij})^w \coloneqq x_{i \cdot w^{-1}, j \cdot w^{-1}}$.
	Note that $\frX^w = w^{-1} \frX w$, 
	and thus by \eqref{eq:u(w)-res}, we get that
	\begin{equation}\label{eq:u(w)^v}
		\frR(w)^v = (e + (v^{-1} \frX v)|_{\inv w})w.
	\end{equation}

\begin{lemma}
	\label{thm:x(v)x(w)}
	If $v$, $w \in \SymmetricGroup{n}$ with $\ell(vw) = \ell(v)+\ell(w)$,
	then
	\begin{equation}
		\label{eq:x(v)x(w)}
		\frR(v) \: \frR(w)^v \ =\ \frR(vw) \; + \; \Bigl(\smashoperator{\sum_{\subalign{j\colon (i, j \cdot v^{-1}) &\in \inv v,\\ (j, k \cdot w^{-1}) &\in \inv w}}} x_{i, j \cdot v^{-1}} x_{j \cdot v^{-1}, k \cdot (vw)^{-1}} \Bigr)_{ik} \enspace .
	\end{equation}
\end{lemma}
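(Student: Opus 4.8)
The plan is to expand the product $\frR(v)\,\frR(w)^v$ directly, using \eqref{eq:u(w)^v} and \eqref{eq:u(w)-res}, and then to split the result into a main part equal to $\frR(vw)$ and a quadratic correction term. By \eqref{eq:u(w)^v} and \eqref{eq:u(w)-res} we have $\frR(w)^v = \bigl(e + (v^{-1}\frX v)|_{\inv w}\bigr)\,w$ and $\frR(v) = \bigl(e + \frX|_{\inv v}\bigr)\,v$, hence $\frR(v)\,\frR(w)^v = \bigl(e + \frX|_{\inv v}\bigr)\,v\,\bigl(e + (v^{-1}\frX v)|_{\inv w}\bigr)\,w$. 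The first step is to move the permutation matrix $v$ to the right past the middle factor: a direct index check with the convention \eqref{eq:right action Sn} — conjugation by $v$ sends the $(i,j)$-entry of a matrix to its $(i\cdot v,\,j\cdot v)$-entry — gives $v\,\bigl((v^{-1}\frX v)|_{\inv w}\bigr)\,v^{-1} = \frX|_{(\inv w)\cdot v^{-1}}$, so that
\[
\frR(v)\,\frR(w)^v \;=\; \bigl(e + \frX|_{\inv v}\bigr)\bigl(e + \frX|_{(\inv w)\cdot v^{-1}}\bigr)\,vw .
\]

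Next I would expand the product of the two matrices $e + \frX|_{\inv v}$ and $e + \frX|_{(\inv w)\cdot v^{-1}}$. Because $\ell(vw) = \ell(v)+\ell(w)$, \cref{thm:inv(vw)} says that $\inv v$ and $(\inv w)\cdot v^{-1}$ are disjoint with union $\inv vw$; since the two supports are disjoint, $\frX|_{\inv v} + \frX|_{(\inv w)\cdot v^{-1}} = \frX|_{\inv vw}$, and \eqref{eq:u(w)-res} then gives
\[
\bigl(e + \frX|_{\inv v}\bigr)\bigl(e + \frX|_{(\inv w)\cdot v^{-1}}\bigr) \;=\; \frU(vw) \;+\; \frX|_{\inv v}\cdot\frX|_{(\inv w)\cdot v^{-1}} .
\]
Multiplying through by $vw$, the first summand becomes $\frU(vw)\,vw = \frR(vw)$, so it only remains to identify the quadratic matrix $\frX|_{\inv v}\cdot\frX|_{(\inv w)\cdot v^{-1}}\cdot vw$ with the double sum on the right of \eqref{eq:x(v)x(w)}.

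To do this I would compute its $(i,k)$-entry: the matrix product contributes $\sum_{l} x_{il}\,x_{lm}$ with $m \coloneqq k\cdot(vw)^{-1}$, summed over those $l$ with $(i,l)\in\inv v$ and $(l,m)\in(\inv w)\cdot v^{-1}$; the last condition unwinds to $(l\cdot v,\,m\cdot v)\in\inv w$, where $m\cdot v = k\cdot(vw)^{-1}\cdot v = k\cdot w^{-1}$. Substituting $j \coloneqq l\cdot v$, equivalently $l = j\cdot v^{-1}$, turns the sum into $\sum_{j} x_{i,\,j\cdot v^{-1}}\,x_{j\cdot v^{-1},\,k\cdot(vw)^{-1}}$ over all $j$ with $(i,\,j\cdot v^{-1})\in\inv v$ and $(j,\,k\cdot w^{-1})\in\inv w$, which is precisely the $(i,k)$-entry of the right-hand side of \eqref{eq:x(v)x(w)}. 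No new idea is needed here; the one point requiring care is the bookkeeping with the right action of $\SymmetricGroup{n}$ — keeping straight that left multiplication by a permutation matrix permutes rows while right multiplication permutes columns, together with simplifications such as $k\cdot(vw)^{-1}\cdot v = k\cdot w^{-1}$ — and this is also where a transpose- or inverse-type slip would most easily creep in.
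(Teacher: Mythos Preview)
Your proof is correct and follows essentially the same approach as the paper. The only cosmetic difference is organizational: you first conjugate the middle factor $v\,(v^{-1}\frX v)|_{\inv w}\,v^{-1} = \frX|_{(\inv w)\cdot v^{-1}}$ (this is precisely the identity \eqref{thm:x(v)x(w):claim J} in the paper) and then expand the product $(e+\frX|_{\inv v})(e+\frX|_{(\inv w)\cdot v^{-1}})$, whereas the paper expands first into four terms and then applies the conjugation identity to the linear ones; the computation of the quadratic term and the use of \cref{thm:inv(vw)} are identical.
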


The reader should not worry too much about the $v$ in $\frU(w)^v$;
this is just to make sure that the indices of the indeterminates $x$ in $\frU(v)$ and $\frU(w)^v$ match those
occurring in $\frU(vw)$.

\begin{proof}
	First, we note that for any matrix $m$ and $J \subseteq [n]\times [n]$, we have
	\begin{equation}
		\label{thm:x(v)x(w):claim J}
		m|_{J \cdot v^{-1}} \ = \ v \: (v^{-1} m v)|_J \: v^{-1}
	\end{equation}
	because
	\begin{multline*}
		\bigl(v \: (v^{-1} m v)|_J \: v^{-1}\bigr)_{ij}
		\stackrel{\eqref{eq:right action Sn}}{=} \bigl((v^{-1} m v)|_J\bigr)_{i \cdot v,j \cdot v}
		= \begin{smallcases}
			(v^{-1} m v)_{i \cdot v, j \cdot v} & \text{if $(i \cdot v, j \cdot v) \in J$,} \\
			0 & \text{otherwise}
		\end{smallcases}\\
		\stackrel{\eqref{eq:right action Sn}}{=} \begin{smallcases}
			m_{ij} & \text{if $(i, j) \in J \cdot v^{-1}$,} \\
			0 & \text{otherwise}
		\end{smallcases}
		= (m|_{J \cdot v^{-1}})_{ij} \enspace.
	\end{multline*}
	Now, we get that
	\begin{align}
		\frR(v) \: \frR(w)^{v}
          \ &\stackrel{\mathclap{\eqref{eq:u(w)^v}}}{=} \ (v + \frX|_{\inv v} v) \: (w + (v^{-1} \frX v)|_{\inv w}w) \notag \\
            &= \
              \underbrace{vw + \frX|_{\inv v} vw + v \: (v^{-1} \frX v)|_{\inv w} \: v^{-1} v w}_{(*)}
              + \underbrace{ \frX|_{\inv v}v \: (v^{-1} \frX v)|_{\inv w} w }_{(**)} \label{eq:proof:u(v)u(w):1} \enspace.
	\end{align}
	We analyze the two braces separately.
	For the first one, we get
	\begin{equation*}
		(*) \
		\stackrel{\eqref{thm:x(v)x(w):claim J}}{=} \ \bigl(e + \underbrace{ \frX|_{\inv v} + \frX|_{(\inv w) \cdot v^{-1}}}_{ \frX|_{\inv v \cup \inv w \cdot v^{-1}} } \bigr) vw
		\ \stackrel{\text{\cref{thm:inv(vw)}}}{=} \ (e + \frX|_{\inv vw}) vw
		\ \stackrel{\mathclap{\eqref{eq:u(w)-res}}}{=} \ \frR(vw) \enspace.
	\end{equation*}
	For $(**)$, we get that
	\def\TempA{\scriptstyle x_{j \cdot v^{-1},k \cdot (vw)^{-1}}}
	\def\TempB{\scriptsize if $(j, k \cdot w^{-1}) \in \inv w$,}
	\begin{align*}
		\frX|_{\inv v}v &= \left(
		\begin{smallcases}
			x_{i,j \cdot v^{-1}}                   & \makebox[\widthof{\TempB}][l]{\scriptsize if $(i,j \cdot v^{-1}) \in \inv v$,} \\
			\mathmakebox[\widthof{$\TempA$}][l]{0} & \text{otherwise},
		\end{smallcases}
		\right)_{ij},
		\\
		(v^{-1} \frX v)|_{\inv w}w &= \left(
		\begin{smallcases}
			x_{j \cdot v^{-1},k \cdot (vw)^{-1}} & \text{if $(j, k \cdot w^{-1}) \in \inv w$,} \\
			0                                    & \text{otherwise},
		\end{smallcases}
		\right)_{jk}
	\end{align*}
	by unraveling the definitions,
	so the matrix product $(**)$ gives the sum in \cref{eq:x(v)x(w)}.
\end{proof}

\begin{corollary}
	For all $i \neq j$, we have $\frR(s_i) \frR(s_j)^{s_i} = \frR(s_is_j)$.
\end{corollary}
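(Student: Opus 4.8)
The plan is to read this off from \cref{thm:x(v)x(w)} by specializing $v = s_i$ and $w = s_j$. First I would check the hypothesis $\ell(s_is_j) = \ell(s_i) + \ell(s_j)$: since $s_i \neq s_j$, the word $s_is_j$ is reduced, so $\ell(s_is_j) = 2 = \ell(s_i) + \ell(s_j)$. Equivalently, via \cref{thm:inv(vw)}, one verifies $\inv s_i \cap (\inv s_j)\cdot s_i^{-1} = \emptyset$: here $\inv s_i = \{(i, i{+}1)\}$ while $(\inv s_j)\cdot s_i^{-1}$ is the singleton $\{(j\cdot s_i,\ (j{+}1)\cdot s_i)\}$, and this can equal $\{(i, i{+}1)\}$ only if $j\cdot s_i = i$, which forces $j = i{+}1$ and then $(j{+}1)\cdot s_i = i{+}2 \neq i{+}1$ --- a contradiction, so the two sets are disjoint and \cref{thm:x(v)x(w)} applies.

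It then remains to see that the correction term on the right-hand side of \eqref{eq:x(v)x(w)} is the zero matrix. Renaming the running indices $i, j, k$ of that formula to $a, c, b$ to avoid a clash with the fixed $i, j$ of the present statement, the $(a,b)$-entry of the correction term is a sum over those $c \in [n]$ with $(a, c\cdot s_i^{-1}) \in \inv s_i$ and $(c, b\cdot s_j^{-1}) \in \inv s_j$. The first membership condition forces $a = i$ and $c\cdot s_i^{-1} = i{+}1$; since $s_i$ is an involution and $(i{+}1)\cdot s_i = i$, this yields $c = i$. The second membership condition forces $c = j$. As $i \neq j$ there is no such $c$, so the correction term vanishes identically, and \cref{thm:x(v)x(w)} collapses to $\frR(s_i)\,\frR(s_j)^{s_i} = \frR(s_is_j)$.

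No real obstacle is expected here; the corollary is a direct specialization. The one point requiring care is the bookkeeping of the right action of $\SymmetricGroup{n}$ on $[n]$ --- in particular using $s_i^{-1} = s_i$ and $(i{+}1)\cdot s_i = i$ correctly when solving $c\cdot s_i^{-1} = i{+}1$ --- together with keeping the index names of \cref{thm:x(v)x(w)} distinct from the $i, j$ appearing in the statement of the corollary.
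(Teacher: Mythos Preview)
The proposal is correct and follows exactly the paper's approach: apply \cref{thm:x(v)x(w)} with $v=s_i$, $w=s_j$ and observe that the correction sum is empty. Your writeup simply spells out in detail what the paper's one-line proof leaves implicit, including the (straightforward) verification of the length hypothesis.
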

\begin{proof}
	In this case, the parenthesized sum in \cref{eq:x(v)x(w)} ranges over the empty set.
\end{proof}

\begin{example}
	With $n = 4$ and $k = 2$, a case of shortest length with $\frR(v) \: \frR(w)^v \neq \frR(vw)$ is
	$v = s_1 s_2$ and $w = s_1$.
	In this case, we obtain
	\begin{align*}
		\frR(vw) &= \begin{psmallmatrix}
			x_{13} & x_{12} & 1 & 0 \\
			x_{23} & 1      & 0 & 0 \\
			1      & 0      & 0 & 0 \\
			0      & 0      & 0 & 1
		\end{psmallmatrix},&
		\frR(v) \frR(w)^v &= \begin{psmallmatrix}
			\bm{x_{12} x_{23}}+ x_{13} & x_{12} & 1 & 0 \\
			x_{23}                     & 1      & 0 & 0 \\
			1                          & 0      & 0 & 0 \\
			0                          & 0      & 0 & 1
		\end{psmallmatrix},
		\\
		\frR(vw)^{\wedge k} &= \begin{psmallmatrix}
			\substack{\bm{-x_{12} x_{23}} \\ + x_{13}} & -x_{23} & 0      & -1 & 0      & 0 \\[2pt]
			-x_{12}                                    & -1      & 0      & 0  & 0      & 0 \\
			0                                          & 0       & x_{13} & 0  & x_{12} & 1 \\
			-1                                         & 0       & 0      & 0  & 0      & 0 \\
			0                                          & 0       & x_{23} & 0  & 1      & 0 \\
			0                                          & 0       & 1      & 0  & 0      & 0
		\end{psmallmatrix}, &
		\bigl(\frR(v) \frR(w)^v\bigr)^{\wedge k} &= \begin{psmallmatrix}
			x_{13}  & -x_{23} & 0                                        & -1 & 0      & 0 \\
			-x_{12} & -1      & 0                                        & 0  & 0      & 0 \\[2pt]
			0       & 0       & \substack{\bm{x_{12} x_{23}}\\ + x_{13}} & 0  & x_{12} & 1 \\[2pt]
			-1      & 0       & 0                                        & 0  & 0      & 0 \\
			0       & 0       & x_{23}                                   & 0  & 1      & 0 \\
			0       & 0       & 1                                        & 0  & 0      & 0
		\end{psmallmatrix}
	\end{align*}
	with the offending entries in bold.
\end{example}

\newcommand{\ColMat}{\mathcal{M}}
\newcommand{\IndMat}{\mathcal{I}}
The \emph{column matroid} $\ColMat(m)$ of an $k \times n$-matrix $m$
is the matroid with ground set $[n]$, whose
independent sets are the sets $J$ of column indices of $m$ for which the columns $(m_{*j})_{j\in J}$ are linearly independent.
Note that as subsets of $[n]$, independent sets of $\ColMat(m)$ can be ordered lexicographically.
In particular, if $S \subseteq \binom{[n]}{k}$,
then by definition, $\Delta_g(S)$ is the lexicographically minimal basis of $\ColMat(g^{\wedge S})$,
where the ground set $\binom{[n]}{k}$ of $\ColMat(g^{\wedge S})$ itself is ordered lexicographically.

\begin{proposition}
	\label{thm:u(ws) and u(w)u(s)}
	Let $v$, $w \in \SymmetricGroup{n}$ with $\ell(vw) = \ell(v)+\ell(w)$.
	Then $\ColMat(\frR(v w)^{\wedge S}) = \ColMat\bigl((\frR(v) \: \frR(w)^v)^{\wedge S}\bigr)$
	for every $S \subseteq \binom{[n]}{k}$.
	In particular, $\Delta_{\frR(v w)}(S) = \Delta_{\frR(v) \: \frR(w)^v}(S)$.
\end{proposition}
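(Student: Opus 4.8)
The plan is to reduce the statement to the product formula \eqref{eq:x(v)x(w)} and then track what passing to $k$-th compound matrices does to the column matroid. First I would extract a clean normal form from \cref{thm:x(v)x(w)}. Since $\ell(vw)=\ell(v)+\ell(w)$, \cref{thm:inv(vw)} tells us that $\inv(vw)=\inv v\cup(\inv w)\cdot v^{-1}$ is a disjoint union, so $\frX|_{\inv v}+\frX|_{(\inv w)\cdot v^{-1}}=\frX|_{\inv(vw)}$. Feeding this into \eqref{eq:x(v)x(w)} and recalling $\frR(vw)=\frU(vw)(vw)=(e+\frX|_{\inv(vw)})(vw)$, the correction term on the right of \eqref{eq:x(v)x(w)} becomes exactly $N\cdot(vw)$, where $N:=\frX|_{\inv v}\cdot\frX|_{(\inv w)\cdot v^{-1}}$ is a product of two strictly upper triangular matrices, hence strictly upper triangular. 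Therefore
\[
	\frR(v)\,\frR(w)^v \;=\; \bigl(\frU(vw)+N\bigr)(vw) \;=\; a\cdot\frR(vw),\qquad a:=e+N\,\frU(vw)^{-1}\in U,
\]
with $a$ a unipotent upper triangular matrix.

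Next I would go over to compounds. By \cref{thm:compound matrix}, $(\frR(v)\frR(w)^v)^{\wedge k}=a^{\wedge k}\,\frR(vw)^{\wedge k}$ with $a^{\wedge k}$ unipotent upper triangular for the lexicographic order on $\binom{[n]}{k}$, so restricting to the rows of $S$ gives $(\frR(v)\frR(w)^v)^{\wedge S}=(a^{\wedge k})_{S*}\cdot\frR(vw)^{\wedge k}$. The principal submatrix $(a^{\wedge k})_{SS}$ is unipotent upper triangular, hence invertible, and left multiplication by an invertible matrix does not change the column matroid; so it suffices to compare $\ColMat(\frR(vw)^{\wedge S})$ with the column matroid of the matrix whose $\sigma$-th row ($\sigma\in S$) is $(\frR(vw)^{\wedge k})_{\sigma*}+\sum_{\rho}c_{\sigma\rho}\,(\frR(vw)^{\wedge k})_{\rho*}$, the sum running over $\rho\notin S$ with $\rho\lex{>}\sigma$; here $c:=(a^{\wedge k})_{SS}^{-1}(a^{\wedge k})_{SS^{c}}$ vanishes off such index pairs precisely because $a^{\wedge k}$ is upper triangular.

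The heart of the matter is to show that this triangular row modification does not alter the column matroid. I would do this by evaluating all maximal minors via the Cauchy--Binet formula: a Plücker coordinate of the modified matrix indexed by $T\in\binom{\binom{[n]}{k}}{\abs{S}}$ equals the corresponding one of $\frR(vw)^{\wedge S}$ plus a $\ZZ[x_{ij}]$-linear combination of minors of $\frR(vw)^{\wedge k}$ sitting strictly \enquote{lower} in the relevant triangular ordering, and symmetrically with the two matrices interchanged (using $a^{-1}\in U$). Since the entries of $\frR(vw)$ are polynomials in the algebraically independent indeterminates $x_{ij}$ with $(i,j)\in\inv(vw)$, and all the correcting coefficients are polynomials in those same indeterminates, no such correction can cancel a minor that is not identically zero nor create one out of identically vanishing minors; hence a Plücker coordinate of the modified matrix vanishes identically exactly when the matching one of $\frR(vw)^{\wedge S}$ does, and the two column matroids coincide. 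Conceptually this is the statement that $\frR(vw)$ and $\frR(v)\frR(w)^v$ are both \emph{generic} elements of the one Bruhat cell $BvwB$, presented in the normal form $\frU\cdot(vw)$, so the submatrices $(-)^{\wedge S}$ realize the single column matroid that $BvwB$ attaches to $S$. The last assertion is then immediate: as recalled just before the proposition, $\Delta_g(S)$ is the lexicographically least basis of $\ColMat(g^{\wedge S})$, so equal column matroids force $\Delta_{\frR(vw)}(S)=\Delta_{\frR(v)\frR(w)^v}(S)$.

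The step I expect to be the genuine obstacle is exactly this non-cancellation argument. The bare claim \enquote{left multiplication by a unipotent upper triangular matrix followed by a row restriction preserves the column matroid} is false for arbitrary matrices---one already sees this for a single row in $\EE^{2}$---so the proof really has to exploit both the algebraic independence of the entries of $\frR(vw)$ and the fact that the perturbation $N$ is assembled from those very indeterminates.
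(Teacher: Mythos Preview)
Your setup is correct: from \eqref{eq:x(v)x(w)} one indeed gets $\frR(v)\,\frR(w)^v=(\frU(vw)+N)(vw)$ with $N=\frX|_{\inv v}\cdot\frX|_{(\inv w)\cdot v^{-1}}$ strictly upper triangular, hence $\frR(v)\,\frR(w)^v=a\cdot\frR(vw)$ for some $a\in U$. You also correctly identify the conceptual target in your penultimate paragraph: both matrices should be \emph{generic} representatives of the Bruhat cell $B(vw)B$. The problem is that you try to deduce the column-matroid equality as a \emph{consequence} of this genericity via a Cauchy--Binet expansion, rather than proving the genericity itself. That route has the gap you yourself flag: the coefficients $c_{\sigma\rho}$ live in the same polynomial ring $\FF[x_{ij}:(i,j)\in\inv(vw)]$ as the minors of $\frR(vw)^{\wedge k}$, so there is no algebraic-independence barrier separating the ``main'' Pl\"ucker coordinate from the ``lower'' correction terms. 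Nothing in the triangular bookkeeping rules out either cancellation of a nonzero minor or creation of a nonzero value from vanishing ones; the sentence ``no such correction can cancel\dots'' is exactly the assertion that needs proof.

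The paper avoids this entirely by proving the genericity claim directly. It reduces to $w=s_l$ and verifies two things. First, a short combinatorial claim about inversions shows that the support of $N$ is contained in $\inv(vs_l)$, so that $\frU(vs_l)+N$ actually lies in $U(vs_l)$; this is not automatic for general $w$ and is the reason for the reduction. Second, writing the nonzero entries of $\frU(vs_l)+N$ as $y_{ij}=x_{ij}$ or $y_{ij}=x_{ij}+x_{i,j'}x_{j',j}$, the Jacobian criterion (\cref{thm:algebraic independence jacobian}) shows that the family $(y_{ij})_{(i,j)\in\inv(vs_l)}$ is algebraically independent over $\FF$: the Jacobian $(\partial y/\partial x)$ has constant term $1$. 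Thus $\frR(v)\,\frR(s_l)^v=(e+\frY|_{\inv(vs_l)})(vs_l)$ is just another instantiation of $\frR(vs_l)$ with a different choice of algebraically independent parameters, and equality of column matroids for \emph{every} $S$ is immediate---no compound-matrix minor analysis required. The general case follows by induction on $\ell(w)$. In short, the fix is to prove that the unipotent factor $\frU(vw)+N$ already has the generic shape of $\frU(vw)$, not to track what happens downstream in the compound.
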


\begin{proof}
	It suffices to prove the \lcnamecref{thm:u(ws) and u(w)u(s)} for $w = s_l$ for some $l$;
	the statement then follows inductively.
	Recall that $\frR(vw) = (e + \frX|_{\inv vw})vw$, where the entries $x_{ij}$ of $\frX$ are algebraically independent.
	We will show that $\frR(v) \frR(w)^v = (e + \frY|_{\inv vw})vw$
	for a different matrix $\frY$ with entries in $\EE$ algebraically independent over $\FF$.
	Of course, $\ColMat\bigl((e + \frX|_{\inv vw})vw\bigr)$ does not depend on the precise collection $\frX$
	of $\FF$-algebraically independent elements,
	and thus does not change when replacing $\frX$ by $\frY$:
	\[
	\ColMat\bigl((e + \frX|_{\inv vw})vw\bigr) = \ColMat\bigl((e + \frY|_{\inv vw})vw\bigr).
	\]
	By \cref{thm:x(v)x(w)}, the matrix $\frR(v) \: \frR(s_l)^v$ has entries
	\begin{multline}
		\label{eq:proof:matroid-u(w)u(s):1}
		\bigl( \frR(v) \: \frR(s_l)^v \bigr)_{ik} \ = \
		\delta_{i \cdot vs_l, k} +
		\begin{cases}
			x_{i, k \cdot (v s_l)^{-1}} & \text{if $(i, k \cdot (v s_l)^{-1}) \in \inv v s_l$,}\\
			0 & \text{otherwise}
		\end{cases}
		\\ +
		\begin{cases}
			x_{i, k \cdot v^{-1}} x_{k \cdot v^{-1}, k \cdot (v s_l)^{-1}} & \text{if $(i, k \cdot v^{-1}) \in \inv v$ and $k = l$,} \\
			0 & \text{otherwise,}
		\end{cases}
	\end{multline}
	because the only inversion of $s_l$ is $(l, l+1)$, which implies $j = k = l$ is the only summand in the sum in \cref{eq:x(v)x(w)}.

	\begin{claim*}
		If $\ell(v s_l) > \ell(v)$ and $(i, l \cdot v^{-1}) \in \inv v$, then $(i, l \cdot (vs_l)^{-1}) \in \inv vs_l$.
	\end{claim*}
	\begin{claimproof}
		Let $(i, l \cdot v^{-1}) \in \inv v$; that is $i < l \cdot v^{-1}$ and $i \cdot v > l$.
		Because $\ell(vs_l) > \ell(v)$, we have $(l, l+1) \notin \inv v^{-1}$; that is, $l \cdot v^{-1} < (l+1) \cdot v^{-1}$.
		Together, we get that $i < (l+1) \cdot v^{-1}$.
		In particular, $i \cdot v \neq l+1$, so with $i \cdot v > l$, we get that $i \cdot v > l+1$.
		This shows that $(i, (l+1) \cdot v^{-1}) = (i, l \cdot (vs_l)^{-1}) \in \inv v$.
		Now $\inv v \subseteq \inv vs_l$ by \cref{thm:inv(vw)},
		which finishes the claim.
	\end{claimproof}
	We may now rewrite \cref{eq:proof:matroid-u(w)u(s):1} as
	\begin{equation*}
		\bigl( \frR(v) \: \frR(s_l)^v \bigr)_{ik} =
		\begin{cases}
			1                            & \text{if $k = i \cdot vs_l$,} \\
			x_{i, k \cdot (v s_l)^{-1}}  & \text{if $(i, k \cdot (v s_l)^{-1}) \in \inv v s_l$ and $k \neq l$,}\\
			\begin{multlined}[b]
				x_{i, k \cdot (v s_l)^{-1}} \\[-1em]
				+ x_{i, k \cdot v^{-1}} x_{k \cdot v^{-1}, k \cdot (v s_l)^{-1}}
			\end{multlined}              & \text{if $(i, k \cdot v^{-1}) \in \inv v$ and $k = l$,}\\
			0 & \text{otherwise.}
		\end{cases}
	\end{equation*}
	Recall that $\frU(w) = e + \frX|_{\inv w}$ for the generic matrix $\frX$.
	Let $\frY = (y_{ij})_{ij}$ be the matrix with entries
	\[
	y_{ij} \coloneqq \begin{cases}
		x_{ij} & \text{if $j \cdot vs_l \neq l$,} \\
		x_{ij} + x_{i, j \cdot s_l} x_{j \cdot s_l, j} & \text{if $j \cdot vs_l = l$ and $(i,j \cdot s_l) \in \inv v$,}
	\end{cases}
	\]
	and let $\frV(v) \coloneqq e + \frY|_{\inv v}$ be defined analogously to $\frU(v)$ with $\frY$ instead of $\frX$.
	With $j = k \cdot (vs_l)^{-1}$, we see that $\frR(v) \: \frR(s_l)^v = (e + \frY|_{\inv vs_l}) v s_l = \frV(vs_l)vs_l$.
	We see that as polynomial in the $x_{ij}$, the Jacobian determinant $\det(\partial y_{jj'} / \partial x_{ii'})_{ii',jj'}$
	has constant term one and thus cannot be identically zero.
	\Cref{thm:algebraic independence jacobian} now implies that the variables $y_{ik}$ are algebraically independent over $\FF$.
	Therefore, $\frU(vs_l)^{\wedge S}$ and $\frV(vs_l)^{\wedge S} = (\frR(v) \frR(s_l)^v)^{\wedge S}$ have the same column matroid for any $S$,
	which implies $\Delta_{\frR(vs_l) }(S) = \Delta_{\frV(vs_l) vs_l}(S) = \Delta_{\frR(v) \: \frR(s_l)}(S)$.
\end{proof}

\begin{remark}
	\label{rmk:shift does not commute with product}
	It is not necessarily true that $\Delta_{\frR(v) \frR(w)}(S) = \Delta_{\frR(v)}(\Delta_{\frR(w)}(S))$.
	In fact,~\cite[\S 6.2]{Kalai02} contains an example that shows that $\Delta(S) = \Delta_{\frR(w_0)}(S)$
	need not equal $\Delta_{\frR(v)}(\Delta_{\frR(w)}(S))$ for \emph{any} $v, w \in \SymmetricGroup{n}$ with $vw = w_0$ and $v, w \neq e$.
\end{remark}

\section{Combinatorial shifting}
\label{sec:combinatorial}
We now look into combinatorial shifting as defined by Erd\H{o}s--Ko--Rado~\cite{ErdosKoEtAl:1961}.
The reader is referred to the surveys by Frankl~\cite{Frankl:1987} and Kalai~\cite[\S 6.2]{Kalai02} for the connection with extremal combinatorics.
Knutson~\cite{Knutson:1408.1261} gives an algebraic-geometric interpretation of combinatorial shifting in the setting of Schubert calculus.
In particular, he relates combinatorial shifting to the equivariant cohomology of $\Gr(k,\CC^n)$; see also \cref{rmk:grassmannian}.
Woodroofe~\cite{Woodroofe:2022} connects Knutson's results back to extremal combinatorics.
In this way he obtains an exterior-algebra analogue of the Erd\H{o}s--Ko--Rado theorem.
Bulavka--Gandini--Woodroofe~\cite{BulavkaGandiniWoodroofe:2406.17857} combine combinatorial and algebraic shifting to prove further analogs to the Erd\H{o}s--Ko--Rado theorem
for intersecting families in simplicial complexes.
Hibi--Murai~\cite{Murai+Hibi:2009} carried out an extensive study of the possible Betti tables of the Stanley--Reisner ideals $I_{K'}$ for complexes $K'$ obtained from a fixed $K$ by combinatorial shifting.

\begin{definition}[{\cite[\S 2]{Frankl:1987}}]
	For $w \in \SymmetricGroup{n}$ a transposition, the \emph{combinatorial shift} $\CShift_w(S)$
	of a $k$-uniform hypergraph $S \subseteq \binom{[n]}{k}$
	is the $k$-uniform hypergraph $\CShift_w(S) \coloneqq \Set{\CShift_w(\sigma, S); \sigma \in S}$, where
        \[
          \CShift_w(\sigma, S) \ \coloneqq \ \begin{cases}
                                               \sigma \cdot w & \text{if $\sigma > \sigma \cdot w \notin S$} \\
                                               \sigma         & \text{otherwise.}
                                             \end{cases}
                                           \]
\end{definition}
By construction we have $\abs{\CShift_w(S)} = \abs{S}$.
Now we can realize any combinatorial shift as a partial shift.
\begin{proposition}
	\label{thm:partial shift of simple transpositions}
	Let $S \subseteq \binom{[n]}{k}$ and $w = (i\ j) \in \SymmetricGroup{n}$ be a transposition with $i < j$.
	\begin{enumerate}
		\item\label{thm:partial shift of simple transpositions:1}
		We have $\Delta_{\gamma(w)}(S) = \CShift_w(S)$ for the $n\times n$-matrix
		\begin{equation}
			\label{eq:cshift-matrix}
			\gamma(w) \;=\;
			\begin{pNiceMatrix}[first-col,first-row,small]
				  &   &        & i      &   &        &   & j &        &   \\
				  & 1 &        &        &   &        &   &   &        &   \\
				  &   & \ddots &        &   &        &   &   &        &   \\
				i &   &        & x_{ij} &   &        &   & 1 &        &   \\
				  &   &        &        & 1 &        &   &   &        &   \\
				  &   &        &        &   & \ddots &   &   &        &   \\
				  &   &        &        &   &        & 1 &   &        &   \\
				j &   &        & 1      &   &        &   & 0 &        &   \\
				  &   &        &        &   &        &   &   & \ddots &   \\
				  &   &        &        &   &        &   &   &        & 1
			\end{pNiceMatrix}\;.
		\end{equation}
		\item\label{thm:partial shift of simple transpositions:2}
			If $w$ is a simple transposition, then $\Delta_{\frR(w)}(S) = \CShift_w(S)$.
			In particular, $\Delta_{\frR(w)}(S) \lex\leq S$.
	\end{enumerate}
\end{proposition}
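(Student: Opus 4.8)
The plan is to prove \proofref{thm:partial shift of simple transpositions:1} by an explicit computation of the column matroid $\ColMat(\gamma(w)^{\wedge S})$ and then to deduce \proofref{thm:partial shift of simple transpositions:2} from it.

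For \proofref{thm:partial shift of simple transpositions:1}, write $w=(i\ j)$ with $i<j$ and abbreviate $\gamma=\gamma(w)$; since $\det\gamma=\pm1$ we have $\gamma\in\GL(n,\EE)$. By \cref{rmk:exterior powers} the column of $\gamma^{\wedge S}$ indexed by $\sigma\in\binom{[n]}{k}$ is the image $\bar\gamma_\sigma$ in $V(S)=V^{\wedge k}/\Span{e_\rho; \rho\notin S}$ of $\gamma_\sigma=\gamma^{\wedge k}e_\sigma$, and $\Delta_\gamma(S)$ is the lexicographically minimal basis of $\ColMat(\gamma^{\wedge S})$. The rows of $\gamma$ are $\gamma e_i=x_{ij}e_i+e_j$, $\gamma e_j=e_i$, and $\gamma e_m=e_m$ for $m\notin\{i,j\}$, so a direct wedge-product computation (in the third case below $x_{ij}\,e_i\wedge e_i=0$) gives
\[
  \gamma_\sigma \;=\;
  \begin{cases}
    x_{ij}\,e_\sigma \pm e_{\sigma\cdot w} & \text{if } i\in\sigma \text{ and } j\notin\sigma,\\
    \pm e_{\sigma\cdot w} & \text{if } j\in\sigma \text{ and } i\notin\sigma,\\
    \pm e_\sigma & \text{otherwise,}
  \end{cases}
\]
with the signs being immaterial below.

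The crucial observation I would establish next is that $\ColMat(\gamma^{\wedge S})$ decomposes as a direct sum of matroids indexed by the orbits of $w=(i\ j)$ acting on $\binom{[n]}{k}$. These orbits are the singletons $\{\tau\}$ with $\{i,j\}\subseteq\tau$ or $\{i,j\}\cap\tau=\emptyset$, together with the $2$-element orbits $\{p,q\}$, where $p$ is the member containing $i$ but not $j$, $q=p\cdot w$ the one containing $j$ but not $i$, and $p\lex< q$ since $\min(p\symdiff q)=i\in p$. Indeed, the displayed formula shows that every basis vector $\bar e_\tau$ ($\tau\in S$) occurring in the column $\bar\gamma_\sigma$ lies in the $w$-orbit of $\sigma$; hence $V(S)=\bigoplus_O\Span{\bar e_\tau; \tau\in O\cap S}$ with the columns indexed by an orbit $O$ lying in the $O$-summand, and the greedy selection of the lexicographically minimal basis may be run one orbit at a time. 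It then remains to match the greedy selection with $\CShift_w$ orbit by orbit: on a singleton $\{\tau\}$ the only column is $\pm\bar e_\tau$, which is nonzero — hence selected — exactly when $\tau\in S$, in agreement with $\CShift_w(\tau,S)=\tau$; on a $2$-element orbit $\{p\lex< q\}$ the columns are $\bar\gamma_p=x_{ij}\,\bar e_p\pm\bar e_q$ and $\bar\gamma_q=\pm\bar e_p$, and because $p\lex< q$ the greedy inspects column $p$ first, so running through the four cases according to whether $p\in S$ and/or $q\in S$ — and using $x_{ij}\ne0$ (it is one of the prescribed algebraically independent elements) in the cases $\{p,q\}\subseteq S$ and $p\in S$, $q\notin S$ — one checks that the indices selected from $\{p,q\}$ are $\{p,q\}$ when $\{p,q\}\subseteq S$, $\{p\}$ when exactly one of $p,q$ lies in $S$, and $\emptyset$ otherwise. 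This coincides with $\CShift_w$ on $\{p,q\}\cap S$, since $\CShift_w$ fixes $p$ and sends $q$ to $p$ iff $q\in S$ and $p\notin S$; taking the union over all orbits gives $\Delta_{\gamma(w)}(S)=\CShift_w(S)$.

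For \proofref{thm:partial shift of simple transpositions:2}, let $w=s_l=(l\ l{+}1)$. Then $\inv s_l=\{(l,l{+}1)\}$, so $\frU(s_l)$ is the identity apart from the entry $x_{l,l+1}$ in position $(l,l{+}1)$; right-multiplication by the permutation matrix of $s_l$ interchanges columns $l$ and $l{+}1$ and turns $\frU(s_l)$ into the matrix which is the identity outside rows and columns $\{l,l{+}1\}$ and carries the block $\begin{psmallmatrix}x_{l,l+1}&1\\1&0\end{psmallmatrix}$ there. Hence $\frR(s_l)=\gamma(s_l)$, and \proofref{thm:partial shift of simple transpositions:1} yields $\Delta_{\frR(s_l)}(S)=\Delta_{\gamma(s_l)}(S)=\CShift_{s_l}(S)$. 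Finally $\Delta_{\frR(w)}(S)\lex\leq S$ follows because $\sigma\mapsto\CShift_w(\sigma,S)$ is a bijection from $S$ onto $\CShift_w(S)$ with $\CShift_w(\sigma,S)\lex\leq\sigma$ for all $\sigma\in S$ (it only ever exchanges $l{+}1$ for the smaller $l$), and whenever $\phi\colon S\to T$ is a bijection with $\phi(\sigma)\lex\leq\sigma$ for all $\sigma$, listing $S$ and $T$ in increasing lexicographic order gives termwise domination, so $T\lex\leq S$. I expect the wedge-product computation of $\gamma_\sigma$ to be entirely routine; the real care goes into making the orbit-wise direct-sum decomposition of $\ColMat(\gamma(w)^{\wedge S})$ precise, into keeping straight inside a $2$-element orbit that the lexicographically smaller set is $p$ (the member containing $i$) so that the greedy processes the two columns in the order reproducing $\CShift_w$, and into not dropping the hypothesis $x_{ij}\ne0$ — without it \proofref{thm:partial shift of simple transpositions:1} already fails, as one sees for $S=\{p\}$ with $i\in p$, $j\notin p$, where replacing $\gamma(w)$ by the bare permutation matrix of $w$ would give $\{q\}\ne\{p\}=\CShift_w(S)$.
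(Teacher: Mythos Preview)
Your proof is correct and follows essentially the same approach as the paper: both compute the structure of $\gamma(w)^{\wedge k}$ explicitly and read off the column matroid of $\gamma(w)^{\wedge S}$. The paper phrases the computation in terms of the \emph{rows} of $\gamma(w)^{\wedge k}$ and summarizes the outcome as a description of the circuits of $\ColMat(\gamma(w)^{\wedge S})$, whereas you compute the \emph{columns} $\gamma_\sigma$ and organize the analysis via the orbit decomposition under $w$; the content is the same. Your argument is in fact more detailed than the paper's (which leaves the final matching with $\CShift_w$ and the ``in particular'' clause to the reader), and your justification of $\CShift_w(S)\lex\le S$ via the elementwise-dominating bijection is a nice explicit touch. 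One small slip: you write ``the rows of $\gamma$ are $\gamma e_i=\dots$'', but $\gamma e_i$ is the $i$th \emph{column}; your formulas are the correct columns, so this is purely a wording issue.
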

\begin{proof}
	For \proofref{thm:partial shift of simple transpositions:1}, recall that
	\[
		\Delta_{\gamma(w)}(S) \ = \ \Set[\big]{\sigma \in \tbinom{[n]}{k}; \gamma(w)^{\wedge S}_{*\sigma} \notin \Span_{\FF}[\big]{\gamma(w)^{\wedge S}_{*\tau}; \tau < \sigma}} \enspace,
	\]
	Note that $\sigma \cdot w > \sigma$ if $i \in \sigma \not\ni j$, and $\sigma \cdot w < \sigma$ if $i \notin \sigma \ni j$.
	The rows of $\gamma(w)^{\wedge k}$ are of the form
	\begin{equation}
		\label{eq:u(s_i) compound mat}
		\gamma(w)^{\wedge k}_{\sigma*} \ = \
		\def\LeaderA{\mathmakebox[\widthof{$\dotsc, x_{i,j},$}]{\dotfill}}
		\def\LeaderB{\mathmakebox[\widthof{$0,\dotsc$}]{\dotfill}}
		\left\{
		\begin{array}{r<{{}} @{} *{5}{c<{{}} @{}} l l}
			(0, & \multicolumn{2}{@{}c@{}}{\LeaderA,}   & 0,\overset{\sigma}{\pm 1},0, & \multicolumn{2}{@{}c@{}}{\LeaderB}      & {},0) & \text{if $\sigma \cdot w = \sigma$,} \\
			(0, & \dotsc, & \overset{\sigma}{x_{i,j}},  & 0,\dotsc,                 0, & \overset{\mathclap{\sigma \cdot w}}{1}, & \dotsc & {},0) & \text{if $\sigma \cdot w > \sigma$,} \\
			(0, & \dotsc, & \overset{\sigma\cdot w}{1}, & 0,\dotsc,                 0, & \overset{\mathclap{\sigma}}{0},         & \dotsc & {},0) & \text{if $\sigma \cdot w < \sigma$,}
		\end{array}
		\right.
	\end{equation}
	where the superscripts indicate the column of the respective entry.
	In other words, every row $\gamma(w)_{\sigma*}$ of $\gamma(w)$ contains at most two non-zero entries:
	either an entry $x_{i,j}$ in column $\sigma$ and an entry $1$ in column $\sigma \cdot w$ if $\sigma \leq \sigma \cdot w$,
	or a single non-zero entry $1$ in column $\sigma \cdot w$ in all other cases.
	If follows that $\ColMat(\gamma(w)^{\wedge S})$ has precisely the circuits $\Set{ \{\sigma, \sigma \cdot w\}; \sigma \in S \not\ni \sigma \cdot w}$.

	For \proofref{thm:partial shift of simple transpositions:2}, let $w = s_i$. Then $\frU(w) = \gamma(w)$,
	so \proofref{thm:partial shift of simple transpositions:2} follows from \proofref{thm:partial shift of simple transpositions:1}.
\end{proof}

\begin{remark}
	\Cref{thm:partial shift of simple transpositions} differs from a similar statement of Hibi--Murai~\cite{Murai+Hibi:2009}.
	Namely, in \cite[Lemma~2.5]{Murai+Hibi:2009} it is shown that a certain $t$-dependend construction $(-)_{ij}(t)$ of ideals in the exterior algebra
	satisfies $I_{\CShift_{(i\,j)}(S)} = (I_S)_{ij}(t)$ for $t = 0$
	and for $t\neq 0$ arises as $(I)_{ij}(t) = \lambda^t_{ij} \cdot I$ for some matrix $\lambda^t_{ij} \in \GL(n,\FF)$.
	In particular, the statement does not show how $\CShift_{(i\,j)}(-)$ is induced by an invertible $n\times n$-matrix,
	which is covered by \cref{thm:partial shift of simple transpositions}.

	Further, for $w = (i\ j)$, the matrix $\gamma(w)$ from \eqref{eq:cshift-matrix} differs from the matrix
	\[
		\frU(w) \;=\;
		\begin{pNiceMatrix}[first-col,first-row,small]
			  &   &        & i         &           &        &           & j &        &   \\
			  & 1 &        &           &           &        &           &   &        &   \\
			  &   & \ddots &           &           &        &           &   &        &   \\
			i &   &        & x_{ij}    & x_{i,i+1} & \cdots & x_{i,j-1} & 1 &        &   \\
			  &   &        & x_{i+1,j} & 1         &        &           &   &        &   \\
			  &   &        & \vdots    &           & \ddots &           &   &        &   \\
			  &   &        & x_{j-1,j} &           &        & 1         &   &        &   \\
			j &   &        & 1         &           &        &           & 0 &        &   \\
			  &   &        &           &           &        &           &   & \ddots &   \\
			  &   &        &           &           &        &           &   &        & 1
		\end{pNiceMatrix}\;.
	\]
	Both lie in the Bruhat cell of $w$; however, $\frU(w)$ is generic in this cell, but $\gamma(w)$ is not.
\end{remark}

\begin{proposition}
	A $k$-uniform hypergraph $S \subset \binom{[n]}{k}$ is shifted
	if and only if $\CShift_{s_i}(S) = S$ for all $i < n$.
\end{proposition}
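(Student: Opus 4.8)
The plan is to reduce the set equation $\CShift_{s_i}(S)=S$ to an elementary closure condition, and then to show that condition is equivalent to $S$ being an initial segment of the domination order.

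First I would note that $\CShift_{s_i}(S)=S$ holds if and only if $\CShift_{s_i}(\sigma,S)=\sigma$ for every $\sigma\in S$. One direction is immediate. For the other, suppose some $\sigma\in S$ had $\CShift_{s_i}(\sigma,S)=\sigma\cdot s_i\neq\sigma$; by the definition of $\CShift_{s_i}$ this forces $\sigma\cdot s_i\notin S$, yet $\sigma\cdot s_i=\CShift_{s_i}(\sigma,S)\in\CShift_{s_i}(S)=S$, a contradiction. Since $\sigma\cdot s_i<\sigma$ in the domination order exactly when $i+1\in\sigma$ and $i\notin\sigma$, in which case $\sigma\cdot s_i=\sigma\setminus\{i+1\}\cup\{i\}$, the hypothesis ``$\CShift_{s_i}(S)=S$ for all $i<n$'' is equivalent to the condition $(\ast)$: for every $\sigma\in S$ and every $i<n$ with $i+1\in\sigma$ and $i\notin\sigma$, one has $\sigma\setminus\{i+1\}\cup\{i\}\in S$.

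If $S$ is shifted, then $(\ast)$ holds, being the special case $j=i+1$ of the explicit description of shiftedness given earlier. Conversely, assuming $(\ast)$, I would show that $\rho\in S$ whenever $\sigma\in S$ and $\rho\leq\sigma$, which is precisely shiftedness. I would induct on the weight difference $\sum_{a\in\sigma}a-\sum_{b\in\rho}b\geq 0$. If it is $0$ then $\rho=\sigma\in S$. Otherwise write $\sigma=\{a_1<\dots<a_k\}$ and $\rho=\{b_1<\dots<b_k\}$, let $\ell$ be the largest index with $b_\ell<a_\ell$, and let $p\leq\ell$ be the smallest index for which $a_p,a_{p+1},\dots,a_\ell$ are consecutive integers. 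From $b_p\leq b_\ell-(\ell-p)<a_\ell-(\ell-p)=a_p$ one gets $b_p<a_p$, hence $a_p\geq 2$; together with the minimality of $p$ this yields $a_p-1\notin\sigma$, so $(\ast)$ gives $\sigma'\coloneqq\sigma\setminus\{a_p\}\cup\{a_p-1\}\in S$. Since $b_p\leq a_p-1$ and all other entries are unchanged, $\rho\leq\sigma'$, while the weight of $\sigma'$ is one less than that of $\sigma$; the induction hypothesis applied to $\rho\leq\sigma'$ yields $\rho\in S$.

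The crux is this last step, and specifically the choice of which element of $\sigma$ to push down: one cannot simply decrease $a_\ell$, since $a_\ell-1$ may already belong to $\sigma$. Decreasing instead the bottom element $a_p$ of the maximal run of consecutive integers ending at position $\ell$ is what makes an adjacent down-shift available while keeping $\sigma'$ above $\rho$; the only genuine computation is checking $b_p<a_p$ from $b_\ell<a_\ell$ and the strict monotonicity of both sequences, which is sketched above.
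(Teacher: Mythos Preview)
Your proof is correct. The reduction to condition $(\ast)$ is clean, and the induction on the weight difference is carried out carefully; in particular, the choice of $p$ as the bottom of the maximal run of consecutive entries ending at position $\ell$, together with the inequality $b_p \leq b_\ell-(\ell-p) < a_\ell-(\ell-p)=a_p$, is exactly what is needed to guarantee that $a_p-1\notin\sigma$ and $\rho\leq\sigma'$.

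The paper takes a different, shorter route for the nontrivial direction. Instead of proving directly that $(\ast)$ implies closure under arbitrary down-shifts, it argues by contrapositive: if $S$ is not shifted, pick $\sigma\in S$ with $j\in\sigma$, $i\notin\sigma$, $i<j$, and $\sigma\cdot t_{ij}\notin S$, and walk along the path $\sigma_k\coloneqq\sigma\cdot t_{kj}$ for $i\leq k\leq j$. Since $\sigma_i\notin S$ and $\sigma_j=\sigma\in S$, somewhere along this path one finds $\sigma_{k+1}\in S$ with $\sigma_{k+1}\cdot s_k=\sigma_k\notin S$, witnessing $\CShift_{s_k}(S)\neq S$. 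Your approach is more explicit and entirely self-contained, and it actually proves the stronger structural fact that the domination order is generated by adjacent covers; the paper's argument is terser but leaves more to the reader (in particular, checking that $\sigma_{k+1}\cdot s_k=\sigma_k$ at the chosen $k$).
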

\begin{proof}
	For $1 \leq i < j \leq n$, let $t_{ij}$ be the transposition $(i\ j)$.
	Assume that $S$ is not shifted, so there exists a $\sigma \in S$ and $i < j \in \sigma$
	such that $\sigma \cdot t_{ij} \notin S$.
	With $\sigma_k \coloneqq \sigma \cdot t_{kj}$ for $i \leq k < j$,
	there must be some $k$ s`ch that $\sigma_{k+1} \in S$ and $\sigma_{k+1} \cdot s_k = \sigma_k \notin S$.
	Then $\CShift_{s_k}(S) \ni \sigma_k$.
\end{proof}

\section{The partial shift graph}
\label{sec:partial-shift-graph}
In this section we investigate ways of relating the various partial shifts of all uniform hypergraphs of the same kind to each other.
As a first idea we could consider the directed graph where each $k$-uniform hypergraph on $n$ elements with $m$ hyperedges is a node, and each partial shift with respect any invertible matrix defines a directed edge.
Interestingly, that graph is not acyclic already for $k = m = 1$.
Recall that we embed $\SymmetricGroup{n}$ into $\GL(n, \EE)$, so partial shifting $\Delta_w$ by the permutation matrix $w$ makes sense,
but is different from the partial shift $\Delta_{\frR(w)}$ with respect to the permutation $w$. 
Let $S = \{\{i\}\}$ consists of a single $1$-hyperedge.
Then shifting by a permutation \emph{matrix} $w$ yields $\Delta_{w}(\{\{i\}\}) = \{\{i\cdot w\}\}$.
Therefore, we pass to a suitable subgraph, which will turn out to be acyclic;
see \cref{thm:partial shift graph acyclic}.
In this way, we arrive at a hierarchy of partial shifts that is compatible with the weak order on $\SymmetricGroup{n}$.

\begin{definition}
	For $n$, $k$ and $m \in \NN$,
	the \emph{partial shift graph $\ShiftGraph(n,k,m)$} is the directed graph
	with vertex set $\binom{\binom{[n]}{k}}{m}$ and edges $S \to T$
	if there is a $w \in \SymmetricGroup{n}$ such that $T = \Delta_{\frR(w)}(S)$.
\end{definition}
\begin{figure}[tb]
	\sbox0{\includegraphics{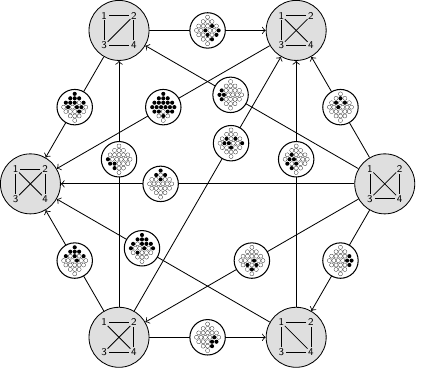}}%
	\raisebox{-\height}{\usebox0}%
	\hspace{1ex}%
	\raisebox{\abovecaptionskip-\height}{%
		\hspace{-\ExtraCaptionMargin}%
		\begin{minipage}{\linewidth-\wd0+2\ExtraCaptionMargin-1ex}%
			\caption{Partial shift graph $\ShiftGraph(4,2,5)$ of all graphs on four vertices with five edges.
				It contains precisely one shifted complex (i.e., one sink), which is the node on the left.
				For each edge $S \to T$ for $S, T \in \binom{[4]}{2}$,
				the filled circles in the small diagrams on that edge represents the set
				$\Set{w \in \SymmetricGroup{4}; \Delta_{\frR(w)}(S) = T}$,
				following the visualization of $\SymmetricGroup{4}$ from~\cites[Figure~2.4]{BjornerBrenti:2005}[Figure~3.65]{Stanley:EC1}.
				If a permutation $w$ is missing from all outgoing arrows from a complex $S$,
				this means that $\Delta_{\frR(w)}(S) = S$.
				In particular, $e$ is missing from all arrows,
				and $w_0$ is present in all arrows to the sink.
				\label{fig:PSG-4-2-5}
			}%
		\end{minipage}%
		\hspace{-\ExtraCaptionMargin}%
	}%
\end{figure}
\Cref{fig:PSG-4-2-5} shows a visualization of the partial shift graph $\ShiftGraph(4,2,5)$.
\Cref{thm:partial shift of simple transpositions:2} shows that the simple transpositions in $\SymmetricGroup{n}$
generate an acyclic subgraph of $\ShiftGraph(n,k,m)$.

\subsection*{Acyclicity of the partial shift graph}
Recall from the proof of \cref{thm:shift preserves cardinality}
the integer sequence $r(m)$ of a matrix $m$ with $r_0(m) \coloneqq 0$ and $r(m)_\ell \coloneqq \rk [(m_{*j})_{j \leq \ell}]$.
Analogously to \eqref{eq:rank-sequence-shift}, we obtain
\begin{equation}
	\label{eq:shift r-seq}
	\Delta_{\frR(w)}(S) = \Set[\big]{ \sigma \in \tbinom{[n]}{k}; r(\frR(w)^{\wedge S})_{\sigma-1} < r(\frR(w)^{\wedge S})_{\sigma}},
\end{equation}

We may compare rank sequences of matrix by the lexicographic order on $\NN^{\binom{n}{k}}$.
Our central result is that larger (in the weak order on $\SymmetricGroup{n}$) words
give rise to matrices $\frR(w)^{\wedge S}$ with lexicographically larger rank-sequences,
which in turn produce lexicographically smaller partial shifts.
From this, it follows easily that $\ShiftGraph(n,k,m)$ is acyclic; see \cref{thm:partial shift graph acyclic}.
Another remarkable consequence is that the full shift of a hypergraph is the lexicographically smallest one among all its partial shifts,
see \cref{thm:full shift lex-smallest}.

\begin{theorem}
	\label{thm:rank sequences lex order}
	Let $w \in \SymmetricGroup{n}$ and $\ell(ws_i) > \ell(w)$. Then
	\[
	r\bigl(\frR(w)^{\wedge S}\bigr) \ \lex\leq \ r\bigl(\frR(w s_i)^{\wedge S}\bigr) \enspace.
	\]
\end{theorem}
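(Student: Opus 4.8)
The plan is to reduce the inequality to the effect of a single column swap on $\frR(w)^{\wedge S}$ and then to exploit that the swap is governed by a fresh indeterminate. First I would pass from $\frR(ws_i)$ to the more tractable product $\frR(w)\,\frR(s_i)^w$: since $\ell(ws_i)>\ell(w)$ and $s_i$ is a simple reflection, $\ell(ws_i)=\ell(w)+\ell(s_i)$, so \cref{thm:u(ws) and u(w)u(s)} (with $w$ and $s_i$ in the roles of $v$ and $w$ there) gives $\ColMat(\frR(ws_i)^{\wedge S})=\ColMat((\frR(w)\,\frR(s_i)^w)^{\wedge S})$. A rank sequence is determined by the column matroid together with the lexicographic order on its ground set, so $r(\frR(ws_i)^{\wedge S})=r((\frR(w)\,\frR(s_i)^w)^{\wedge S})$. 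By the multiplicativity in \cref{thm:compound matrix}, restricting to the rows indexed by $S$ yields $(\frR(w)\,\frR(s_i)^w)^{\wedge S}=\frR(w)^{\wedge S}\cdot(\frR(s_i)^w)^{\wedge k}$. Writing $M:=\frR(w)^{\wedge S}$ and $M^+:=M\cdot(\frR(s_i)^w)^{\wedge k}$, it remains to prove $r(M)\lex\leq r(M^+)$.

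Next I would make $M^+$ explicit. By \eqref{eq:u(w)^v} and $\inv s_i=\{(i,i+1)\}$ one has $\frR(s_i)^w=e_{i,i+1}(c)\,s_i$ (the transvection $e_{i,i+1}(c)$ being the identity with an extra $c$ in position $(i,i+1)$), where $c:=x_{i\cdot w^{-1},\,(i+1)\cdot w^{-1}}$. The hypothesis $\ell(ws_i)>\ell(w)$ forces $i\cdot w^{-1}<(i+1)\cdot w^{-1}$, and since $(i\cdot w^{-1})\cdot w=i<i+1=((i+1)\cdot w^{-1})\cdot w$ the pair $(i\cdot w^{-1},(i+1)\cdot w^{-1})$ is not an inversion of $w$; hence $c$ does not occur among the entries of $\frR(w)=\frU(w)w$, so $c$ is transcendental over the subfield $K\subseteq\EE$ generated by $\FF$ and the entries of $M$. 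Now $\frR(s_i)^w=e_{i,i+1}(c)\,s_i$ has exactly the support of the matrix $\gamma(s_i)$ of \eqref{eq:cshift-matrix} (with $x_{i,i+1}$ renamed to $c$), so \eqref{eq:u(s_i) compound mat} describes $(\frR(s_i)^w)^{\wedge k}$: its column indexed by $\sigma$ is $\pm e_\sigma$ when $\sigma\cdot s_i=\sigma$, while for each pair $\alpha<\beta:=\alpha\cdot s_i$ with $i\in\alpha\not\ni i+1$ it equals $c\,e_\alpha+e_\beta$ in position $\alpha$ and $e_\alpha$ in position $\beta$. Writing $c_\sigma:=M_{*\sigma}$, this gives $M^+_{*\alpha}=c\,c_\alpha+c_\beta$, $M^+_{*\beta}=c_\alpha$ for each such pair, and $M^+_{*\sigma}=\pm c_\sigma$ otherwise; in other words $M^+$ is $M$ with, for every pair $(\alpha,\beta)$, the columns at $\alpha$ and $\beta$ replaced by $c\,c_\alpha+c_\beta$ and $c_\alpha$, up to irrelevant signs.

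The core of the argument is the rank comparison, and here it suffices to show $\dim\langle M^+_{*\tau}:\tau\leq\sigma\rangle\geq\dim\langle M_{*\tau}:\tau\leq\sigma\rangle$ for every $\sigma\in\binom{[n]}{k}$ (all spans over $\EE$), because this gives $r(M^+)_\ell\geq r(M)_\ell$ for all $\ell$, whence at the first index where the two sequences differ $r(M^+)$ is strictly larger, i.e. $r(M)\lex\leq r(M^+)$. Fixing $\sigma$, among the pairs $\alpha<\beta=\alpha\cdot s_i$ with $i\in\alpha\not\ni i+1$ let $\mathcal A_1$ be those with $\beta\leq\sigma$ and $\mathcal A_2$ those with $\alpha\leq\sigma<\beta$, and note that the sets $\tau\leq\sigma$ with $i+1\in\tau\not\ni i$ are precisely the $\beta$ arising from $\mathcal A_1$. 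A short computation using the description of $M^+$ and $c_\beta=(c\,c_\alpha+c_\beta)-c\,c_\alpha$ then shows
\[
\langle M_{*\tau}:\tau\leq\sigma\rangle=W+\langle c_\alpha:\alpha\in\mathcal A_2\rangle,\qquad
\langle M^+_{*\tau}:\tau\leq\sigma\rangle=W+\langle c\,c_\alpha+c_\beta:\alpha\in\mathcal A_2\rangle,
\]
with the \emph{same} subspace $W$, the span of the columns $c_\tau$ with $\tau\leq\sigma$, $\tau\cdot s_i=\tau$, together with all $c_\alpha,c_\beta$ for $\alpha\in\mathcal A_1$. Passing to the quotient modulo $W$ (which is $K$-rational, being spanned by columns of $M$) and writing $\mathcal A_2=\{\alpha_1,\dots,\alpha_t\}$, these two dimensions equal $\dim W+\rk A$ and $\dim W+\rk(cA+B)$, where $A,B$ are the $t$-rowed matrices over $K$ whose rows are the coordinate vectors modulo $W$ of $c_{\alpha_j}$ and $c_{\beta_j}$. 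So I must show $\rk(cA+B)\geq\rk A$: picking an $r\times r$ submatrix of $A$ with nonzero determinant, $r=\rk A$, the corresponding submatrix of $cA+B$ has determinant $c^r\cdot(\text{that determinant})+(\text{lower-order terms in }c)$, a nonzero polynomial in $c$ because $c$ is transcendental over $K$, so $\rk(cA+B)\geq r$.

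I expect this last step to be the genuine obstacle: for a general matrix, swapping two columns can make the rank sequence lexicographically \emph{smaller}, so one must use structure. The structure is exactly the freshness of $c$ established in the second step — it is what forces $c\,c_\alpha+c_\beta$ to be at least as independent as $c_\alpha$ of any subspace not involving $c$ — and choosing the clean model $M^+=M\cdot(\frR(s_i)^w)^{\wedge k}$ via \cref{thm:u(ws) and u(w)u(s)}, rather than an exact but messier direct formula for $\frR(ws_i)^{\wedge S}$ (which would involve $(k{-}1)$-minors of $\frR(w)$), is what keeps that freshness visible.
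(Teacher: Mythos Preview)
Your proof is correct and takes essentially the same route as the paper: reduce via \cref{thm:u(ws) and u(w)u(s)} to $M^+ = M\cdot(\frR(s_i)^w)^{\wedge k}$, describe the columns of $M^+$ explicitly in terms of those of $M$, verify that the single new indeterminate $c$ is transcendental over the field generated by the entries of $M$, and finish with a leading-coefficient-in-$c$ argument. The only difference is organizational: you quotient by the common subspace $W$ and prove the (slightly stronger) entry-wise inequality $r(M)_\ell \leq r(M^+)_\ell$ directly, whereas the paper argues by induction under the hypothesis that the two rank sequences agree below $\ell$ and then reduces to a single square determinant.
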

\begin{proof}
	\newcommand{\frW}{\mathfrak{w}}
	\newcommand{\frWS}{\mathfrak{ws}}
	\newcommand{\si}{\tilde{s}_i}
	We already know from \cref{thm:u(ws) and u(w)u(s)} that the compound matrices $\frR(w s_i)^{\wedge S}$ and $\frR(w)^{\wedge S} (\frR(s_i)^w)^{\wedge k}$ have the same column matroid.
	Therefore, with $\frW \coloneqq \frR(w)^{\wedge S}$
	and $\frWS \coloneqq \frR(w)^{\wedge S} (\frR(s_i)^w)^{\wedge k}$,
	it suffices to show $r(\frW) \lex\leq r(\frWS)$.
	The only $\FF$-transcendental entry of $\frR(s_i)^v$
	is $(\frR(s_i)^w)_{i,i} = x_{i \cdot w^{-1}, (i+1) \cdot w^{-1}}$,
	which we will abbreviate simply by $x$ in this proof.
	Enumerate $\binom{[n]}{k}$ lexicographically such that $\binom{[n]}{k} \eqqcolon \{\sigma_1 \lex< \dotsb \lex< \sigma_{\binom{n}{k}} \}$,
	and for $l \in \bigl[\binom{n}{k}\bigr]$,
	denote by $l \cdot \si$ the number such that $\sigma_{l \cdot \si} = \sigma_l \cdot s_i$.

	To avoid cluttered notation, we write $\frW_j$ and $\frWS_j$ for the $j$th column of $\frW$ and $\frWS$. 
	The columns of $\frWS$ satisfy
	\begin{equation}
		\label{eq:w-ws-transvection}
		\frWS_l = \pm \begin{cases*}
			\frW_l & if $l \cdot \tilde{s}_i = l$,\\
			\frW_{l \cdot \si} & if $l \cdot \tilde{s}_i < l$,\\
			\frW_{l \cdot \si} + x \frW_{l} & if $l \cdot \tilde{s}_i > l$,
		\end{cases*}
	\end{equation}
	where $x$ is transcendental over the entries of $\frW$.
	Note that \eqref{eq:w-ws-transvection} shows that $(\frR(s_i)^w)^k$ acts on the columns on $\frW$ by a transvection.

	We show inductively that $r(\frW) \lex\leq r(\frWS)$,
	which is the statement of the \lcnamecref{thm:rank sequences lex order}.
	If $\frW_1 = 0$, then $r_1(\frW) \leq r_1(\frWS)$ trivially.
	If $\frW_1 \neq 0$, then
	\begin{equation*}
		\frWS_1 = \pm\begin{cases*}
			\frW_1 & if $1 \cdot \si = 1$,\\
			x\frW_1 + \frW_{1 \cdot \si} & otherwise.
		\end{cases*}
	\end{equation*}
	In the first case, $\frWS_1$ is nonzero trivially.
	In the second, $\frWS_1$ is nonzero because $\frW_1$ is nonzero, $x$ is transcendental over $\FF$, and $\frW_1$ and $\frWS_{1 \cdot \si}$ have entries in $\FF$.
	Together, $r_1(\frW) \leq r_1(\frWS)$.

	By induction, assume for some $l$ that $r_k(\frW) = r_k(\frWS)$ for all $k < l$.
	We have to show that $r(\frW)_l \leq r(\frWS)_l$.
	If $r(\frW)_l = r(\frW)_{l-1}$, then $r(\frW)_l = r(\frW)_{l-1} \stackrel{\text{ass.}}{=} r(\frWS)_{l-1} \leq r(\frWS)_l$ trivially,
	and we are done.
	Hence, assume that $r(\frW)_l < r(\frW)_{l-1}$;
	in other words, $\frW_l \notin \Span{\frW_{<l}}$,
	and we have to show that $\frWS_l \notin \Span{\frWS_{<l}}$.
	In general, it follows from \eqref{eq:w-ws-transvection} that $\frW$ and $\frWS$ are of the form
	\begin{equation*}\fboxsep=0pt
		\begin{tikzcd}[row sep=large, column sep=2pt, arrows={out=-90, in=90, looseness=.5},
			X/.style = {dashed}, 
			co/.style = {crossing over},
			crossing over clearance=3pt,
			execute at end picture={
				\draw (A1.north west) rectangle (B1.south east);
				\draw (A2.north west) rectangle (B2.south east);
				\draw (C1.north west) rectangle (D1.south east);
				\draw (C2.north west) rectangle (D2.south east);
				\draw (E1.north west) rectangle (F1.south east);
				\draw (E2.north west) rectangle (F2.south east);
			}
			]
			\frW = \bigl(  & |[alias=A1]| {} \ar[d, X]\ar[ dr] & {} \ar[co, dl] & |[alias=B1]| {} \ar[drr] \ar[X, d] & \frW_l \ar[X, d]\ar[drrr]  & |[alias=C1]| {} \ar[co, dll] & |[alias=D1]| {} \ar[X, d] \ar[drr] & \frW_{l \cdot \si} \ar[co, dlll] & |[alias=E1]| {} \ar[dll, co] & {} \ar[X, d]\ar[dr] & |[alias=F1]| {} \ar[dl, co] & \bigr)             \\
			\frWS = \bigl( & |[alias=A2]| {}                   & {}             & |[alias=B2]| {}                    & X\frW_l+\frW_{l \cdot \si} & |[alias=C2]| {}              & |[alias=D2]| {}                    & \frW_{l}                         & |[alias=E2]| {}              & {}                  & |[alias=F2]| {}             & \bigr)\mathrlap{,}
		\end{tikzcd}
	\end{equation*}
	where the rectangles stand for blocks of columns, the black arrows correspond to a copy of a column, and the dashed arrows correspond to a copy of a column multiplied by $x$.

	Without loss of generality, we may assume (by deleting columns) that all columns of the submatrix $\frW_{<l}$ are linearly independent.
	Then, by the assumption $r(\frW_{<l}) = r(\frWS_{<l})$, all columns of $\frWS_{<l}$ are linearly independent, too.
	Note that by assumption that $\frW_l \notin \Span{\frW_{\leq l}}$, the column $\frW_l$ is not among the deleted ones, and all columns of $\frW_{\leq l}$ are linearly independent.
	By deleting rows of $\frW$, we may assume that $\frW_{\leq l}$ is invertible,
	and we have to show that $\frWS_{\leq l}$ is invertible.
	Let
	\begin{align*}
		I &\coloneqq \Set{j \leq l; j\cdot\si \leq l},&
		J &\coloneqq \Set{j \leq l; j\cdot\si > l},
	\end{align*}
	and denote by $\frW_I$ and $\frWS_I$ the submatrices of $\frW$ and $\frWS$ with the respective column indices.
	Then $\frWS_{I\cdot\si} = \frW_I t$ for a transvection $t$, and $\frWS_J = x\frW_J + \frW_{J\cdot\si}$, where $j > l $ for all $J\cdot\si$.
	As a transvection, $t$ has determinant one, so we obtain
	\[
	\det\frWS_{\leq l} = \pm\det(\frW_I t, x\frW_J+\frW_{J\cdot\si}) = \pm\det(\frW_I, x\frW_J+\frW_{J\cdot\si})
	\]
	Elements $x_{11},\dotsc,x_{nn} \in \EE$ are algebraically independent over $\FF$
	if and only if the evaluation map $\FF[X_{11},\dotsc,X_{nn}] \to \EE$, $X_{ij} \mapsto x_{ij}$ is injective.
	Therefore, $\det\frWS_{\leq l}$ is nonzero if and only if the polynomial
	\[
	\det(\frW_I, X\frW_J+\frW_{J\cdot\si}) = \pm\underbrace{\det(\frW_I, \frW_J)}_{\pm\det\frW_{\leq l}} X^{\abs{J}} + \mathcal{O}(X^{\abs{J}-1}).
	\]
	in $\FF[X_{11},\dotsc,X_{nn}]$ is nonzero,
	which is the case because the leading coefficient of this polynomial is nonzero by assumption.
	Therefore, $\det\frWS_{\leq l} \neq 0$, which proves the \lcnamecref{thm:rank sequences lex order}.
\end{proof}

Recall that we endow $\binom{[n]}{k}$ with the total order $\lex\leq$.
This defines the lexicographic order on $\binom{\binom{[n]}{k}}{m}$ for any $m$.
As a corollary of \cref{thm:rank sequences lex order}, we obtain:

\begin{corollary}
	\label{thm:partial shift graph acyclic}
	If $\ell(ws_i) > \ell(w)$, then $\Delta_{\frR(w)}(S) \lex\geq \Delta_{\frR(ws_i)}(S)$.
	In particular, the partial shift graph is acyclic.
\end{corollary}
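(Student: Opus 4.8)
The plan is to read the partial shift directly off a rank sequence, as recorded in \eqref{eq:shift r-seq}, and then to quote \cref{thm:rank sequences lex order}. The one new ingredient I would state and prove first is the following elementary observation: if $r$ and $r'$ are integer sequences indexed by $\{0,1,\dots,N\}$ with $r_0=r'_0=0$, with $r_N=r'_N$, and with every step of height $0$ or $1$, then $r\lex\leq r'$ implies that the set of \emph{jump positions} of $r'$ is lexicographically $\leq$ the set of jump positions of $r$. If $r=r'$ there is nothing to show; otherwise let $j_0$ be the least index with $r_{j_0}\neq r'_{j_0}$, so $r_{j_0}<r'_{j_0}$. Since the two sequences agree below $j_0$ and move in unit steps, this forces $r_{j_0}=r_{j_0-1}$ and $r'_{j_0}=r'_{j_0-1}+1$; hence $j_0$ is a jump of $r'$ but not of $r$, whereas for every $j<j_0$ one has $r_{j-1}=r'_{j-1}$ and $r_j=r'_j$, so $j$ is a jump of $r$ exactly when it is a jump of $r'$. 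Thus $j_0$ is the smallest element of the symmetric difference of the two jump sets and it lies in the jump set of $r'$, which is precisely the statement that the jump set of $r'$ is lex-smaller, by the definition of $\lex\leq$ on subsets.

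To obtain the first assertion, apply this to $r\coloneqq r(\frR(w)^{\wedge S})$ and $r'\coloneqq r(\frR(ws_i)^{\wedge S})$. By \cref{thm:shift preserves cardinality} (more precisely, the rank computation in its proof) both $\frR(w)^{\wedge S}$ and $\frR(ws_i)^{\wedge S}$ have rank $\abs S$, so $r$ and $r'$ are genuine unit-step sequences from $0$ to $\abs S$; \cref{thm:rank sequences lex order} gives $r\lex\leq r'$; and by \eqref{eq:shift r-seq} the jump sets of $r$ and $r'$ are exactly $\Delta_{\frR(w)}(S)$ and $\Delta_{\frR(ws_i)}(S)$. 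The observation then yields $\Delta_{\frR(ws_i)}(S)\lex\leq\Delta_{\frR(w)}(S)$.

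For acyclicity I would iterate along a reduced word. Write $w=s_{i_1}\cdots s_{i_\ell}$ with $\ell=\ell(w)$ and set $w_j\coloneqq s_{i_1}\cdots s_{i_j}$, so that $w_j=w_{j-1}s_{i_j}$ and $\ell(w_j)=j>\ell(w_{j-1})$; the first assertion then gives the chain $\Delta_{\frR(w_0)}(S)\lex\geq\Delta_{\frR(w_1)}(S)\lex\geq\cdots\lex\geq\Delta_{\frR(w_\ell)}(S)=\Delta_{\frR(w)}(S)$. Since $\inv e=\emptyset$ we have $\frR(e)=I$, and $\Delta_I(S)=S$ because the leading independent columns of $I^{\wedge S}$ are precisely those indexed by $S$. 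Hence $\Delta_{\frR(w)}(S)\lex\leq S$ for every $w\in\SymmetricGroup n$, i.e.\ every arrow of $\ShiftGraph(n,k,m)$ is non-increasing for the (total) lexicographic order on $\binom{\binom{[n]}{k}}{m}$. Consequently any directed cycle $S=S_0\to S_1\to\cdots\to S_p=S_0$ forces $S_0\lex\geq S_1\lex\geq\cdots\lex\geq S_0$, hence $S_0=\cdots=S_p$; so $\ShiftGraph(n,k,m)$ contains no directed cycle through two or more distinct vertices.

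The substance here is entirely contained in \cref{thm:rank sequences lex order}; the only point that needs a little care is the bookkeeping in the first paragraph, where one must use that both rank sequences really do start at $0$, end at $\abs S$, and have steps of height at most one, so that a lex-larger rank sequence is genuinely equivalent to a lex-smaller set of jump positions.
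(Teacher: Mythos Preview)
Your proof is correct and follows the same approach as the paper's: deduce the lex-comparison of partial shifts from \cref{thm:rank sequences lex order} via the correspondence between rank sequences and their jump sets (which the paper asserts as ``easy to see'' and which you spell out), and then read off acyclicity. One purely cosmetic remark: your local use of $w_0$ for the empty subword clashes with the paper's standing notation for the longest element of $\SymmetricGroup{n}$.
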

\begin{proof}
	For a matrix $m$, let $R(m) \coloneqq \Set{j; r(m)_j > r(m)_{j-1}}$.
	It is easy to see that for two matrices $m$ and $n$ of the same rank, we have $R(m) \lex\leq R(n)$
	if and only if $r(m) \lex\geq r(n)$.
	Now, note that $\Delta_{\frR(w)}(S) = R(m^{\wedge S})$ by \eqref{eq:shift r-seq}.
	It follows from \cref{thm:rank sequences lex order} that if $\ell(ws_i) > \ell(w)$, then
	$\Delta_{\frR(w)}(S) \lex\geq \Delta_{\frR(ws_i)}(S)$.
\end{proof}

\begin{remark}
	\label{rmk:sinks-shifted}
	If $w = s_{i_1} \dotsm s_{i_\ell}$, then
	\[
	S = \Delta_{\frR(e)}(S) \lex\geq \Delta_{\frR(s_{i_1})}(S) \lex\geq \dotsb \lex\geq \Delta_{\frR(w)}(S) \lex\geq \dotsb \lex\geq \Delta_{\frR(w_0)}(S) = \Delta(S).
	\]
	In particular, $S$ is a sink of $\ShiftGraph(n,k,m)$ if and only if $S$ is shifted.
\end{remark}

\pagebreak
\begin{example}
	Let $S = \bigl\{ \{1,2\}, \{1,4\}, \{2,3\} \bigr\} \subseteq \binom{[4]}{2}$
	and $w = s_2 s_3 s_2$.
	Then
	\begin{align*}
		\frR(w)^{\wedge S} &= \begin{psmallmatrix}
			x_{24} & x_{23} & 1 & 0                      & 0       & 0  \\
			1      & 0      & 0 & 0                      & 0       & 0  \\
			0      & 0      & 0 & \substack{-x_{23}x_{34} \\+ x_{24}} & -x_{34} & -1
		\end{psmallmatrix},&
		\frR(w s_1)^{\wedge S} &= \begin{psmallmatrix}
			-x_{24} & x_{14}x_{23}           & x_{14}  & x_{23} & 1 & 0  \\
			-1      & 0                      & 0       & 0      & 0 & 0  \\
			0       & \substack{-x_{23}x_{34} \\+ x_{24}} & -x_{34} & 0      & 0 & -1
		\end{psmallmatrix},
	\end{align*}
	which have rank sequences
	\begin{align*}
		r(\frR(w)^{\wedge S}) &= (1,2,2,3,3,3) & &\lex< &
		r(\frR(w s_1)^{\wedge S}) &= (1,2,3,3,3,3)\\
		\intertext{and give rise to the partial shift complexes}
		\Delta_{\frR(w)}(S) &= \bigl\{ \{1,2\}, \{1,3\}, \{2,3\} \bigr\} & &\lex>&
		\Delta_{\frR(w s_1)}(S) &= \bigl\{ \{1,2\}, \{1,3\}, \{1,4\} \bigr\}.
	\end{align*}
\end{example}

Another consequence of \cref{thm:partial shift graph acyclic} is that the full shift of a hypergraph
is the smallest partial shift of it that is shifted:

\begin{corollary}\label{thm:full shift lex-smallest}
	For every hypergraph $S$ on $n$ vertices,
	the full shift $\Delta(S)$ is the lexicographically smallest element of $\Set{\Delta_{\frR(w)}(S); w \in \SymmetricGroup{n}}$.%
\end{corollary}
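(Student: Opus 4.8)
The plan is to show that $\Delta(S)$ both lies in the set $\Set{\Delta_{\frR(w)}(S); w \in \SymmetricGroup{n}}$ and is a $\lex$-lower bound for it. Membership is immediate: since $\frR(w_0) = \frU(w_0)w_0 = \frU w_0$, \cref{prop:w0} gives $\Delta(S) = \Delta_{\frX}(S) = \Delta_{\frR(w_0)}(S)$, so $\Delta(S)$ is precisely the partial shift with respect to $w_0$. It therefore remains to prove $\Delta_{\frR(w)}(S) \lex\geq \Delta(S)$ for every $w \in \SymmetricGroup{n}$.

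For this I would use the fact, recorded in \cref{rem:length}, that $w_0$ is the \emph{unique} element of maximal length in $\SymmetricGroup{n}$ and that $\ell$ is the rank function of the right weak order. Hence, starting from any $w$, as long as $w \neq w_0$ there is a simple transposition $s_i$ with $\ell(w s_i) > \ell(w)$ (otherwise every simple transposition would be a right descent of $w$, forcing $w = w_0$). Iterating, I obtain simple transpositions $s_{i_1},\dotsc,s_{i_r}$ with $w_0 = w\,s_{i_1}\dotsm s_{i_r}$ and $\ell(w\,s_{i_1}\dotsm s_{i_j}) = \ell(w) + j$ for $0 \le j \le r$; equivalently, a saturated chain from $w$ up to $w_0$ in the right weak order.

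Writing $u_j \coloneqq w\, s_{i_1}\dotsm s_{i_j}$, each step satisfies $\ell(u_j s_{i_{j+1}}) = \ell(u_{j+1}) > \ell(u_j)$, so \cref{thm:partial shift graph acyclic} applies and yields $\Delta_{\frR(u_j)}(S) \lex\geq \Delta_{\frR(u_{j+1})}(S)$. Chaining these inequalities for $j = 0,\dotsc,r-1$ gives
\[
\Delta_{\frR(w)}(S) \;=\; \Delta_{\frR(u_0)}(S) \;\lex\geq\; \Delta_{\frR(u_r)}(S) \;=\; \Delta_{\frR(w_0)}(S) \;=\; \Delta(S).
\]
As $w$ was arbitrary, $\Delta(S)$ is a $\lex$-lower bound for the set, and being an element of it, it is the $\lex$-smallest one.

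I do not expect a genuine obstacle: this is a bookkeeping argument on top of \cref{prop:w0} and the acyclicity estimate \cref{thm:partial shift graph acyclic}. The only point needing (minor) care is the existence of the length-increasing chain from $w$ to $w_0$, but this is a standard property of finite Coxeter groups already implicit in \cref{rem:length} (the weak order is graded by $\ell$ with unique top element $w_0$); alternatively one may invoke the subword characterization of the weak order directly. One could also note that in the special case $w = e$ the resulting chain of inequalities recovers exactly the display in \cref{rmk:sinks-shifted}.
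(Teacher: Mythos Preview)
Your argument is correct and is exactly the approach the paper takes: the corollary is stated without proof because the chain of inequalities you write down is precisely the content of \cref{rmk:sinks-shifted}, which in turn rests on \cref{thm:partial shift graph acyclic} and \cref{prop:w0}. Your only addition is making explicit the (standard) existence of a length-increasing chain from any $w$ up to $w_0$, which the paper leaves implicit.
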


For an example, see the partial shifts of the six-vertex triangulation of $\RR\PP^2_k$ in \cref{tab:shifted partial shifts of RP2}.

\subsection*{The contracted partial shift graph}
By Theorem \ref{thm:partial shift graph acyclic} the partial shift graph $\ShiftGraph(n,k,m)$ is a finite acyclic graph,
so it has at least one sink; possibly more.
As mentioned in \cref{rmk:sinks-shifted}, the sinks of $\ShiftGraph(n,k,m)$ are precisely the shifted complexes.
We define the following graph, whose nodes are in bijection with the shifted complexes in $\ShiftGraph(n,k,m)$.
By the quotient of a (directed) graph $G = (V, E)$ by an equivalence relation on the vertex set $V$
we mean the directed graph $G/{\sim} = (\bar{V}, \bar{G})$ with $\bar{V} = V/{\sim}$ as set of equivalence classes,
and (directed) edges $\bar{E} = \Set{([v], [w]); (v,w) \in E}$.
Note that a quotient of a directed acyclic graph need not be acyclic in general.

\begin{definition}
	\label{def:contracted-shift-graph}
	The \emph{contracted partial shift graph $\ContractedGraph(n,k,m)$}
	is the directed quotient graph $\ShiftGraph(n,k,m)/{\sim}$
	for $S \sim T$ if $\Delta(S) = \Delta(T)$.
\end{definition}

In other words,the nodes of $\ContractedGraph(n,k,m)$ are in bijection to shifted complexes in $\binom{\binom{[n]}{k}}{m}$,
and there is an edge $S \to T$ in $\ContractedGraph(n,k,m)$ between nodes represented by the shifted complexes $S$ and $T$
if there are complexes $S'$, $T'$ such that $\Delta(S') = S$, $\Delta(T') = T$ and $\Delta_w(S) = T$ for some $w$.
The graph $\ContractedGraph(n,k,m)$ can equivalently be seen as a contraction of $\ShiftGraph(n,k,m)$ along all edges $S \to \Delta(S)$.
Observe that the edges of the contracted partial shift graph necessarily correspond to partial shifts $S \mapsto \Delta_{\frR(w)}(S)$ with $\Delta_{\frR(w)}(S) \neq \Delta(S)$.

For a $S \in \ShiftGraph(n,k,m)$, we also define the \emph{contracted shift graph $\ContractedGraph(S)$ induced by $S$} as follows.
Let $\ShiftGraph(S)$ be the subgraph if $\ShiftGraph(n,k,m)$ induced by the nodes $T$ for which there exists a path from $S$ to $T$ (including $S$).
Then $\ContractedGraph(S) \coloneqq \ShiftGraph(S)/{\sim}$, for the equivalence relation $\sim$ from \cref{def:contracted-shift-graph}.
Of course, the entire construction still depends on the field $\FF$.
\Cref{exmp:rp26:contracted} below shows that different choices for $\FF$ (or rather its characteristic) may yield different contracted partial shift graphs.

\section{Partial Shifting to Near Cones}
\label{sec:simplicial}
We now study partial shifts of simplicial complexes.

\subsection*{Simplicial complexes}
Let $K$ be a simplicial complex on the vertex set $[n]$.
That is, $K$ is a subset of $2^{[n]}$ which is closed with respect to taking subsets.
An $s$-face of $K$ is an $(s+1)$-element subset of $K$.
The \emph{$s$-faces} of $K$ form a uniform hypergraph, which we write $K^s$.
The \emph{dimension} of $K$ is the maximal number $s$ for which $K$ has an $s$-face.
The relevance of (partial) shifting to simplicial complexes comes from the following observation.

\begin{proposition}
	\label{thm:shifting preserves complexes}
	Let $K$ be a simplicial complex on the vertex set $[n]$, and let $g\in\GL(n,\EE)$.
	Then the uniform hypergraphs $\Delta_g(K^s)$, for $0\leq s\leq\dim K$, form a simplicial complex, denoted $\Delta_g(K)$.
        Moreover, the f-vectors of $K$ and $\Delta_g(K)$ agree.
\end{proposition}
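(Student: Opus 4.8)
The plan is to show two things: first, that $\Delta_g(K)$ is closed under taking subsets (hence a simplicial complex), and second, that it has the same f-vector as $K$. The f-vector claim is the easy half. By \cref{thm:shift preserves cardinality}, $\abs{\Delta_g(K^s)} = \abs{K^s}$ for every $s$, so each layer of $\Delta_g(K)$ has the same cardinality as the corresponding layer of $K$; once we know $\Delta_g(K)$ is a simplicial complex, this immediately says its f-vector equals that of $K$. So the real content is the closure property.

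For the closure property, the cleanest route is via the exterior-algebra / Stanley--Reisner picture from \cref{rmk:exterior powers} and \cref{rmk:gin}. Write $V = \EE^n$ and let $g$ act on $V$ (hence on $\bigwedge V$) on the left. The condition $\sigma \in \Delta_g(K^s)$ says precisely that $g_\sigma = g^{\wedge (s+1)} e_\sigma$ is \emph{not} in the span of $\{\bar g_\rho : \rho \lex< \sigma\}$ inside $V^{\wedge(s+1)}/\Span{e_\tau : \tau \notin K^s}$. Equivalently, passing to initial ideals, $I_{\Delta_g(K)} = \In_{\lex}(g \cdot I_K)$ where $I_K \subseteq \bigwedge V$ is the Stanley--Reisner ideal of $K$. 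Now $g \cdot I_K$ is an ideal of the exterior algebra (since $g$ acts by an algebra automorphism), and the initial ideal of any ideal with respect to a term order is a monomial ideal that is again an ideal; in the exterior algebra a monomial ideal is automatically the Stanley--Reisner ideal of a simplicial complex (the complement of its generating monomials' support sets is downward-closed). Hence $I_{\Delta_g(K)}$ is the Stanley--Reisner ideal of a simplicial complex, and that complex is exactly $\Delta_g(K)$ because its faces in degree $s+1$ are the monomials $e_\sigma$ not in the initial ideal, i.e.\ the $\sigma \in \Delta_g(K^s)$.

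If one prefers a self-contained combinatorial argument avoiding the ideal language, here is the alternative I would carry out. Suppose $\sigma \in \Delta_g(K^{s})$ and $\sigma' = \sigma \setminus \{a\} \in \binom{[n]}{s}$; we must show $\sigma' \in \Delta_g(K^{s-1})$. The key linear-algebra fact is that the compound/exterior structure is compatible with contraction: there is an operator (contraction against $e_a^*$, or equivalently a suitable restriction of the matrix) carrying $g^{\wedge(s+1)}$-columns to $g^{\wedge s}$-columns and carrying the span of $\{g_\rho : \rho \lex< \sigma,\ \rho \in K^s\}$ into the span of $\{g_{\rho'} : \rho' \lex< \sigma',\ \rho' \in K^{s-1}\}$, because $\rho \lex< \sigma$ together with $a \in \rho$ forces $\rho \setminus\{a\} \lex< \sigma'$, and $\rho \in K^s \Rightarrow \rho\setminus\{a\} \in K^{s-1}$ by $K$ being a complex. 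Then linear independence of the image of $g_\sigma$ forces linear independence of $g_{\sigma'}$ modulo the earlier columns. One has to be a little careful that the columns indexed by faces \emph{not} in $K$ are quotiented out consistently at both levels — this is exactly where the hypothesis that $K$ is a complex (not just a hypergraph) enters. I expect this bookkeeping — matching the quotients $V(K^{s+1})$ and $V(K^{s})$ under contraction and checking the lexicographic inequalities survive — to be the main obstacle; in the ideal-theoretic proof it is hidden inside the single clean statement ``$\In(g\cdot I_K)$ is a monomial ideal, hence Stanley--Reisner,'' which is why I would present that version as the primary argument and relegate the hands-on computation to a remark.
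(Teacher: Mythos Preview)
Your primary argument via initial ideals is correct and clean. The key steps are: $g$ acts by an algebra automorphism of $\bigwedge V$, so $g\cdot I_K$ is an ideal; its initial ideal $\In_{\lex}(g\cdot I_K)$ is a monomial ideal; in the exterior algebra every monomial ideal is the Stanley--Reisner ideal of a simplicial complex; and the degree-by-degree identification of the non-initial monomials with $\Delta_g(K^s)$ follows from \cref{rmk:gin} (applied in each degree, since $(I_K)_{s+1}=(I_{K(K^s)})_{s+1}$ and the initial ideal of a graded ideal is computed degree-wise). The f-vector statement then follows from \cref{thm:shift preserves cardinality}, exactly as you say.

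This is a genuinely different route from the paper's. The paper does not argue at all: it simply notes that the proof in Bj\"orner--Kalai \cite[288]{BjornerKalai:1988} for generic $g$ never uses genericity, and invokes \cref{thm:shift preserves cardinality} for the f-vector. The Bj\"orner--Kalai argument is the hands-on linear-algebra computation you sketch as your second approach: one shows the contrapositive, that if $\sigma'\notin\Delta_g(K^{s-1})$ then $\sigma'\cup\{a\}\notin\Delta_g(K^s)$, by wedging the dependence relation with $g e_a$ and using that $\rho\lex<\sigma'$ implies $\rho\cup\{a\}\lex<\sigma'\cup\{a\}$, and that $K$ being a complex makes $\Span\{e_\tau:\tau\notin K\}$ closed under wedging. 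Your ideal-theoretic version hides all of this bookkeeping inside the single statement that initial ideals of ideals are ideals---which is exactly the advantage you identify. Your combinatorial sketch has the right shape but would be cleaner stated via wedging (going up a degree) rather than contraction (going down); as written, the contraction map does not obviously carry the relevant spans into each other in the direction you need.
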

\begin{proof}
	Scrutinizing the proof for the full shift from~\cite[288]{BjornerKalai:1988} reveals that the standing assumption that the matrix $g$ is generic is never used.
	\Cref{thm:shift preserves cardinality} readily implies that $K$ and $\Delta_g(K)$ share the same f-vector.
\end{proof}

In~\cite[Theorem~3.1]{BjornerKalai:1988} it was shown that the full shift preserves the Betti numbers.
Our next goal is to prove a far reaching generalization of this statement.
Yet the following example shows that arbitrary partial shifts may change the Betti numbers.

\begin{example}\label{exmp:rp26:contracted}
	We consider the triangulation $K=\RR\PP_6^2$ of the real projective plane with six vertices.
	For $\FF=\QQ$, the four shifted complexes corresponding to the nodes of $\ContractedGraph(K)$ and their Betti numbers $\beta_i(-)$ are listed in \cref{tab:shifted partial shifts of RP2}.
	Note that shifted complexes are homotopy equivalent to a wedge of spheres, so their Betti numbers do not depend on the coefficients
	(but the assignment $K \mapsto \Delta_{\frR(w)}(K)$ of course does).
	Since shifted complexes are near cones (see \cref{def:near cone}), their Betti numbers can be calculated using the formula from \cref{thm:betti near cone} below.

	\begin{table}[t]
			\centering
			\caption{The six vertex triangulation $K$ of $\RR\PP^2_k$ and the four shifted complexes in $\ContractedGraph(\RR\PP^2_6)$, together with their rational Betti numbers,
				where $w_1 = w_0s_1$, $w_2 = w_0s_4s_3s_2s_1$, $w_3 = w_0s_4s_2s_3s_2s_1$. See \cref{exmp:rp26:contracted}.
				\label{tab:shifted partial shifts of RP2}}
			$\begin{array}{r @{} >{{}} l <{{}} @{} l l c }
				\toprule
				    \multicolumn{3}{c}{\text{complex}}     & \multicolumn{1}{c}{\text{facets}}                                 & \multicolumn{1}{r}{\mathllap{\text{Betti numbers}}} \\ \midrule
				K & \coloneqq & \RR\PP^2_6                 & \{ 125, 126, 134, 135, 146, 234, 236, 245, 356, 456 \}            & (1, 0, 0)                                           \\
				A & \coloneqq & \Delta^\QQ(K)              & \{ 123, 124, 125, 126, 134, 135, 136, 145, 146, 156\}             & (1,0,0)                                             \\
				B & \coloneqq & \Delta^\QQ_{\frR(w_1)}(K) & \{ 123, 124, 125, 126, 134, 135, 136, 145, 146, 234, 56\}         & (1,1,1)                                             \\
				C & \coloneqq & \Delta^\QQ_{\frR(w_2)}(K) & \{ 123, 124, 125, 126, 134, 135, 136, 234, 235, 236, 45, 46, 56\} & (1,3,3)                                             \\
				D & \coloneqq & \Delta^\QQ_{\frR(w_3)}(K) & \{ 123, 124, 125, 126, 134, 135, 145, 234, 235, 245, 36, 46, 56\} & (1,3,3)                                             \\ \bottomrule
			\end{array}$
	\end{table}

	The integral homology of $\RR\PP^2$ reads $H_0=\ZZ$, $H_1=\ZZ/2\ZZ$, and $H_2=0$.
	Hence the rational Betti numbers of $\RR\PP^2_6$ read $(1, 0, 0)$, which agree with the ones of $A$,
	while the $\GF{2}$-Betti numbers of $\RR\PP^2_6$ read $(1, 1, 1)$, which agree with the ones of $B$.
	Indeed, while we obtain $A = \Delta^\QQ(K)$ as the full shift in characteristic zero, we obtain $B = \Delta^{\GF{2}}(K)$ as the the full shift in characteristic two.
	The contracted partial shift graph in characteristic zero has the four nodes $A,B,C,D$, while $A$ is missing in characteristic two; see \cref{fig:rp26:contracted}.
	\begin{figure}[t]
		\centering
			\tikzset{x=5mm, y=5mm, every edge/.append style={->}, every node/.style={shape=circle, inner sep=1pt, outer sep=1pt, fill}}
			\begin{tikzpicture}
				\node[label=left:$D$]  (D) at (-2, 0) {};
				\node[label=right:$C$] (C) at ( 2, 0) {};
				\node[label=above:$A$] (A) at ( 0, 1) {};
				\node[label=below:$B$] (B) at ( 0,-1) {};
				\draw (A) edge (B) edge (C) edge (D)
				(B) edge (C) edge (D);
			\end{tikzpicture}
			\qquad\qquad
			\begin{tikzpicture}
				\node[label=below:$B$] (B) at ( 0,-1) {};
				\node[label=left:$D$]  (D) at (-2, 0) {};
				\node[label=right:$C$] (C) at ( 2, 0) {};
				\draw (B) edge (C) edge (D);
			\end{tikzpicture}
			\caption{Contracted partial shift graphs $\ContractedGraph(\RR\PP^2_6)$ with respect to $\QQ$ (left) and $\GF{2}$ (right).
				The complexes $A$, $B$, $C$ and $D$ are explained in \cref{tab:shifted partial shifts of RP2}.
				\label{fig:rp26:contracted}
			}
	\end{figure}
\end{example}

\subsection*{Near cones}
We now turn to a class of simplicial complexes which is particularly interesting for algebraic shifting~\cites[\S4]{BjornerKalai:1988}{Nevo:2005}.
In order to write down the definition we let $\RepVert{\sigma}{j}{i} \coloneqq \sigma \setminus \{j\} \cup \{i\}$, where $\sigma \subseteq [n]$, $i\in[n]\setminus\sigma$ and $j\in\sigma$ with $i<j$.
\begin{definition}
	\label{def:near cone}
	A simplicial complex $K$ is a \emph{near cone} if for every $\sigma \in K$ with $1 \notin \sigma$ we have $\RepVert{\sigma}{i}{1} \in K$
	for each $i \in \sigma$.
\end{definition}
Every shifted complex is a near cone. The next result gives an elementary description of the homology of any near cone.
That observation is crucial for seeing that algebraic shifting preserves the Betti numbers~\cite[Theorem~3.1]{BjornerKalai:1988}.
We use $\beta_k(K)$ to denote the $k$th Betti number (with respect to $\FF$) of a simplicial complex $K$.

\begin{proposition}[{\cite[Theorem~4.3]{BjornerKalai:1988}}]
	\label{thm:betti near cone}
	Let $K$ be a near cone.
	Then $K$ is homotopy equivalent to a wedge of spheres and $\beta_k(K) = \abs{\Set{\sigma \in K^k ; \sigma \cup \{1\} \notin K}}$.
\end{proposition}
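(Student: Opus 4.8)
The plan is to exploit the near-cone structure at the apex vertex $1$ to present $K$ as a quotient of a subcomplex by a contractible piece, and to read off the homotopy type directly. First I would split $K$ along the vertex $1$: set
\[
  C \ \coloneqq \ \{\sigma\in K : \sigma\cup\{1\}\in K\}, \qquad
  D \ \coloneqq \ \{\sigma\in K : 1\notin\sigma\}, \qquad
  K_1 \ \coloneqq \ \{\sigma\in K : 1\notin\sigma,\ \sigma\cup\{1\}\in K\},
\]
the closed star, the deletion, and the link of $1$. Directly from the downward-closure axiom one checks that $C$ and $D$ are subcomplexes of $K$, that $K = C\cup D$, and that $C\cap D = K_1$. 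Since $C = \{1\}*K_1$ is a cone with apex $1$, it is contractible, and as $(K,C)$ is a CW pair the quotient map $K\to K/C$ is a homotopy equivalence. Because $K$ is obtained from $D$ by gluing on $C$ along $K_1$, collapsing $C$ is the same as collapsing $K_1$ inside $D$, so $K/C\cong D/K_1$; hence $K\simeq D/K_1$.

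Next I would feed in the near-cone hypothesis to identify $D/K_1$. For a face $\sigma\in D$ and $i\in\sigma$ the near-cone axiom says $\RepVert{\sigma}{i}{1}=\sigma\setminus\{i\}\cup\{1\}\in K$, which means $\sigma\setminus\{i\}\in K_1$; since $K_1$ is closed under taking subsets this forces \emph{every} proper face of $\sigma$ to lie in $K_1$, i.e.\ $\partial\sigma\subseteq K_1$. Viewing $D$ as a regular CW complex with one cell per face and $K_1$ as a subcomplex, it follows that in $D/K_1$ every cell coming from some $\sigma\in D\setminus K_1$ is attached to the wedge point along the constant map. Building $D/K_1$ up by dimension therefore produces an honest wedge of spheres with no higher attaching data, namely $D/K_1\simeq\bigvee_{\sigma}S^{\,|\sigma|-1}$ where $\sigma$ runs over the faces with $1\notin\sigma$, $\sigma\in K$, and $\sigma\cup\{1\}\notin K$. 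Collecting these by dimension, the faces with $|\sigma|-1=k$ are exactly the elements of $\{\sigma\in K^k : \sigma\cup\{1\}\notin K\}$, and taking homology of the wedge gives $\beta_k(K)=\lvert\{\sigma\in K^k : \sigma\cup\{1\}\notin K\}\rvert$.

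The step I expect to require the most care is the claim that $D/K_1$ is \emph{literally} a wedge of spheres, rather than merely a space with the homology of one: this is where the near-cone condition is used essentially, through the observation that after collapsing $K_1$ each remaining cell has constant attaching map, so the wedge decomposition is genuine. The bookkeeping of the basepoint, and the degenerate cases $D=K_1$ (then $K$ is a cone and the wedge is a point) or $K$ without vertices, should be noted but are routine. If one only wants the Betti-number identity and not the homotopy type, there is a shorter route: the relative simplicial chain complex $C_\bullet(D)/C_\bullet(K_1)$ has vanishing differential, again because $\partial\sigma\in C_\bullet(K_1)$ for each generator $\sigma\in D\setminus K_1$, so $\widetilde H_k(K)\cong\widetilde H_k(D/K_1)\cong H_k(D,K_1)$ is free with basis $\{\sigma\in K^k : \sigma\cup\{1\}\notin K\}$; but the collapse argument above delivers the wedge-of-spheres statement simultaneously.
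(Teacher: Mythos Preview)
Your argument is correct. The paper does not supply its own proof of this proposition; it is quoted verbatim from \cite[Theorem~4.3]{BjornerKalai:1988} and used as a black box. Your approach---decomposing $K$ into the closed star $C=\{1\}*K_1$ and the deletion $D$, collapsing the contractible cone, and then observing that the near-cone axiom forces $\partial\sigma\subseteq K_1$ for every $\sigma\in D$ so that each remaining cell in $D/K_1$ has constant attaching map---is essentially the argument Bj\"orner and Kalai give in the cited reference, phrased in slightly more topological language (they work with the decomposition $K=(\{1\}*K_1)\cup\{\sigma_1,\dots,\sigma_r\}$ and read off homology directly). There is nothing to correct.
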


Note that we can write the combinatorial shift of $K$ with respect to the transposition $(i \ j)$ with $i<j$ as
\[
\CShift_{(i\ j)}(\sigma, K) \ = \ \begin{cases}
	\RepVert{\sigma}{j}{i} & \text{if $i \notin \sigma \ni j$ and $\RepVert{\sigma}{j}{i} \notin K$,}\\
	\sigma & \text{otherwise.}
\end{cases}
\]
An immediate consequence from the definition that, for any near cone $K$, we have $\CShift_t(K) = K$ for any transposition $t = (1 \enspace i)$.
This observation has the following generalization.
\begin{proposition}
	Let $K$ be a near cone on $n$ vertices, and let $t \in \SymmetricGroup{n}$ be an arbitrary transposition.
	Then $\CShift_t(K)$ is again a near cone.
\end{proposition}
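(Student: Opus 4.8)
The plan is to verify directly that $L \coloneqq \CShift_t(K)$ satisfies the defining condition of a near cone in \cref{def:near cone}. Recall that $\CShift_t(K)$ is again a simplicial complex (a classical fact about combinatorial shifting; see e.g.\ \cite{Frankl:1987}), so only the vertex-$1$ condition needs checking. Write $t = (a\ b)$ with $a < b$. If $a = 1$, then $\CShift_t(K) = K$ by the observation preceding the proposition and there is nothing to prove, so assume $1 < a < b$. Fix $\tau \in L$ with $1 \notin \tau$ together with some $i \in \tau$, and put $\mu \coloneqq \RepVert{\tau}{i}{1} = \tau \setminus \{i\} \cup \{1\}$ (note $1 \notin \tau$ forces $i \neq 1$). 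The goal is to exhibit a face $\nu \in K$ with $\CShift_t(\nu, K) = \mu$, which then certifies $\mu \in L$.

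The argument rests on a structural dichotomy for a face of $L$: since $\tau \in \CShift_t(K)$, exactly one of the following holds: (A) $\tau \in K$ and $\tau$ is fixed by $\CShift_t(-,K)$; or (B) $\tau \notin K$, in which case necessarily $a \in \tau \not\ni b$, the face $\rho \coloneqq \tau \cdot t = \tau \setminus \{a\} \cup \{b\}$ lies in $K$, and $\CShift_t(\rho, K) = \tau$. Indeed, if $\tau \in K$ but $\tau$ were moved, then writing $\tau = \CShift_t(\sigma,K)$ for some $\sigma \in K$ would force $\sigma \neq \tau$, hence $\sigma$ moved and $\CShift_t(\sigma,K) = \sigma \cdot t \notin K$, contradicting $\tau \in K$; this gives (A), and (B) is the complement. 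In each case I will build $\nu$ from $\tau$ (in case (A)) or from $\rho$ (in case (B)) by near-cone moves of $K$, possibly followed by a single $t$-move, controlling membership in $K$ by combining the near cone axiom for $K$ with the constraint that $\tau$ (resp.\ $\rho$) survived or was moved by the shift.

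Concretely, in case (A) the near cone property of $K$ gives $\mu = \RepVert{\tau}{i}{1} \in K$ at once, so it suffices to show $\mu$ is fixed, i.e.\ that one does not simultaneously have $a \notin \mu \ni b$ and $\mu \setminus \{b\} \cup \{a\} \notin K$. If $b \notin \mu$ or $a \in \mu$ this is automatic; in the remaining sub-cases ($i = a$ with $b \in \tau$, or $i \neq a, b$ with $a \notin \tau \ni b$) one checks $\mu \setminus \{b\} \cup \{a\} \in K$: when $i = a$ it equals $\RepVert{\tau}{b}{1}$, which is in $K$ by near-coning $\tau$ at $b$; when $i \neq a$ one first uses that $\tau$ was not moved, so $\tau \setminus \{b\} \cup \{a\} \in K$, and then near-cones this auxiliary face at $i$. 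In case (B) one splits on whether $i = a$: if $i = a$, near-coning $\rho$ at $b$ yields $\mu = \RepVert{\rho}{b}{1} \in K$ with $b \notin \mu$, so $\mu$ is fixed and lies in $L$; if $i \neq a$ (hence $i \neq b$, since $b \notin \tau$), put $\rho_1 \coloneqq \RepVert{\rho}{i}{1} \in K$ and note $\rho_1 \setminus \{b\} \cup \{a\} = \mu$, so that $\CShift_t(\rho_1, K) = \mu$ whenever $\mu \notin K$, while if $\mu \in K$ then $b \notin \mu$ again makes $\mu$ fixed. In all cases $\mu \in L$, which is the near cone condition.

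The content of the proof is entirely in this case bookkeeping: each step is an elementary set computation plus an invocation of the near cone axiom, but one must keep careful track of which of $1, a, b, i$ belong to each of $\tau, \mu, \rho, \rho_1$. I expect the subtlest point to be the sub-case $a \notin \tau \ni b$ of case (A): there the near cone property of $K$ by itself is insufficient, and one genuinely has to combine it with the fact that $\tau$ was \emph{not} moved by $\CShift_t$ — which forces $\tau \setminus \{b\} \cup \{a\}$ into $K$ — before applying the near cone property once more to that auxiliary face.
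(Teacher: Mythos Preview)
Your proof is correct and follows essentially the same route as the paper: a direct case-by-case verification that the near-cone condition holds for $\CShift_t(K)$, repeatedly invoking the near-cone property of $K$ and, in one sub-case, the fact that a given face was \emph{not} moved by the shift. The only difference is cosmetic: the paper parameterizes by a source face $\sigma \in K$ and checks $\RepVert{\CShift_t(\sigma,K)}{r}{1} \in \CShift_t(K)$ for $r \in \sigma$, whereas you parameterize by a target face $\tau \in \CShift_t(K)$ and split on whether $\tau$ arose as a fixed or a moved face; your (A)/(B) dichotomy makes the case structure somewhat cleaner and avoids the slight awkwardness in the paper of iterating over $r \in \sigma$ rather than $r \in \CShift_t(\sigma,K)$.
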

\begin{proof}
	By the preceeding discussion it is enough to consider $t = (i \enspace j)$ where $1 < i < j$.
	For every $\sigma \in K$ with $1 \notin \sigma$ and every $r \in \sigma$, we have to show that there is a $\tau \in K$ such that $\RepVert{\CShift_t(\sigma, K)}{r}{1} = \CShift_t(\tau, K)$.

		If $i, j \in \sigma$ and $r = i$, then $\tau = \RepVert{\sigma}{r}{1}$:
		we have \[\CShift_t(\RepVert{\sigma}{r}{1}) = \begin{cases}
			\RepVert{\RepVert{\sigma}{i}{1}}{j}{i}& \text{if $\RepVert{\RepVert{\sigma}{i}{1}}{j}{i} \notin K$}\\
			\RepVert{\sigma}{i}{1} &\text{otherwise}
		\end{cases}.\]
		The first case can not occur because $\RepVert{\RepVert{\sigma}{i}{1}}{j}{i} = \RepVert{\sigma}{j}{1} \in K$ since $K$ is a near cone,
		so $\CShift_t(\RepVert{\sigma}{r}{1}) = \RepVert{\sigma}{i}{1}$.
		Now $i \in \sigma$ implies $\sigma = \CShift_t(\sigma, K)$ and thus $\RepVert{\sigma}{r}{1} = \RepVert{\CShift_t(\sigma, K)}{r}{1}$.

		If $i \notin \sigma$ and $r = j$, then $\tau = \sigma$:
		we have \[\RepVert{\CShift_t(\sigma, K)}{j}{1} = \begin{cases}
			\RepVert{\RepVert{\sigma}{j}{i}}{j}{1} & \text{if $\RepVert{\sigma}{j}{i} \notin K$}\\
			\RepVert{\sigma}{j}{1} & \text{otherwise}
		\end{cases}.\]
		In the first case, $\RepVert{\RepVert{\sigma}{j}{i}}{j}{1} = \RepVert{\sigma}{j}{i} = \CShift_t(\sigma, K)$
		because $j \notin \RepVert{\sigma}{j}{i}$.
		In the second case, $\RepVert{\sigma}{j}{1} = \CShift_t(\RepVert{\sigma}{j}{1}, K)$
		again because $j \notin \RepVert{\sigma}{j}{i}$.

		In all other cases, one sees easily that $\tau = \RepVert{\sigma}{r}{1}$.
\end{proof}

The following result has already been suggested in~\cite[Remark~3.2]{BjornerKalai:1988}.
In loc.\ cit.\ it had not been worked out in detail, probably because it is of no direct use for full shifting.
However, for partial shifting the statement will turn out to be crucial.
Our proof is a variation of the \emph{Permutation Lemma} in~\cite[288]{BjornerKalai:1988}:

\begin{lemma}
	\label{thm:col-cone}
	Let $K$ be a simplicial complex on $n$ vertices.
	Further, let $g\in\GL(n,\EE)$ be a matrix whose entries in the first column are algebraically independent over $\FF$ and the other entries of $g$.
	Then $\Delta_g(K)$ is a near cone.
\end{lemma}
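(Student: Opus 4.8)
The plan is to check the defining condition of a near cone (\cref{def:near cone}) head‑on. So I would fix $\sigma\in\Delta_g(K)$ with $1\notin\sigma$ and an element $i\in\sigma$, and show that $\sigma'\coloneqq\RepVert{\sigma}{i}{1}$ lies in $\Delta_g(K)$. Setting $k\coloneqq\abs{\sigma}$ and $S\coloneqq K^{k-1}$, the criterion of \cref{def:partial-g-shift} says $\tau\in\Delta_g(S)$ iff the column $g^{\wedge S}_{*\tau}$ is not in the $\EE$-span of the columns $g^{\wedge S}_{*\rho}$ with $\rho\lex<\tau$; since $\sigma'\lex<\sigma$, the job is to move this non‑containment from the column of $\sigma$ to that of $\sigma'$. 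I would first shrink the field: as $\Delta_g(S)$ only uses $g$ and linear algebra, replace $\EE$ by the subfield generated over $\FF$ by the entries of $g$, and let $\FF_0\subseteq\EE$ be the subfield generated over $\FF$ by the entries of columns $2,\dots,n$; the hypothesis then says $\EE=\FF_0(y_1,\dots,y_n)$ with $y_m\coloneqq g_{m1}$ algebraically independent over $\FF_0$. Writing $f_j\coloneqq ge_j$ and $\overline v$ for the image of $v$ in $V(S)$ as in \cref{rmk:exterior powers}, the columns of $g^{\wedge S}$ come in two flavours: for $1\notin\tau$ the column $g^{\wedge S}_{*\tau}=\overline{f_\tau}$ has entries in $\FF_0$; for $1\in\tau$ one gets $g^{\wedge S}_{*\tau}=\overline{f_1\wedge f_{\tau\setminus\{1\}}}=\sum_{m} y_m\,\overline{e_m\wedge f_{\tau\setminus\{1\}}}$, homogeneous linear in the $y_m$ with $\FF_0$-coefficients. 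In particular, with $\sigma_0\coloneqq\sigma\setminus\{i\}$, the column $g^{\wedge S}_{*\sigma'}=\sum_m y_m\,\overline{e_m\wedge f_{\sigma_0}}$ is the generic vector of the $\FF_0$-subspace $W\coloneqq\langle\overline{e_m\wedge f_{\sigma_0}}\mid m\in[n]\rangle$, whereas $g^{\wedge S}_{*\sigma}=\pm\overline{f_i\wedge f_{\sigma_0}}=\pm\sum_m g_{mi}\,\overline{e_m\wedge f_{\sigma_0}}$ is a particular $\FF_0$-rational vector of $W$. (One also gets $g^{\wedge S}_{*\sigma'}\neq0$ for free, since it vanishes only if every $\overline{e_m\wedge f_{\sigma_0}}$ does, which would force $g^{\wedge S}_{*\sigma}=0$.)

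The core of the argument is the substitution $y_m\mapsto g_{mi}$. It sends $g^{\wedge S}_{*\sigma'}$ to $\pm g^{\wedge S}_{*\sigma}$; it sends $g^{\wedge S}_{*\rho}$ with $\rho=\{1\}\cup\rho_0$ and $\rho_0\lex<\sigma_0$ to $\pm g^{\wedge S}_{*\{i\}\cup\rho_0}$ (or to $0$ if $i\in\rho_0$); and it fixes the columns indexed by sets not containing $1$. A short check with the lexicographic order shows $\{i\}\cup\rho_0\lex<\sigma$ whenever $\rho_0\lex<\sigma_0$, so after the substitution every column left of $\sigma'$ lands in $\langle g^{\wedge S}_{*\rho}\mid\rho\lex<\sigma\rangle$, a span that does not contain $g^{\wedge S}_{*\sigma}$ by the assumption $\sigma\in\Delta_g(S)$. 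Hence, if a dependence $g^{\wedge S}_{*\sigma'}=\sum_{\rho\lex<\sigma'}c_\rho\,g^{\wedge S}_{*\rho}$ could be chosen with all coefficients $c_\rho$ regular at that substitution, then evaluating at $y=(g_{1i},\dots,g_{ni})$ would give $g^{\wedge S}_{*\sigma}\in\langle g^{\wedge S}_{*\rho}\mid\rho\lex<\sigma\rangle$, a contradiction, and we would be done.

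The hard part, and the role played by the Permutation Lemma of Björner--Kalai~\cite[288]{BjornerKalai:1988}, is exactly this regularity of the $c_\rho$: linear dependence is not a closed condition, so the rank of the columns left of $\sigma'$ may genuinely drop when the $y_m$ are set equal to $g_{\bullet i}$ — the culprits being the columns $\{1\}\cup\rho_0$ with $i\in\rho_0$, which collapse to $0$ — and one cannot naively substitute into a relation over $\FF_0(y)$. The plan to get around this is to work over a discrete valuation ring: substitute $y_m\mapsto g_{mi}+\varepsilon z_m$ for a uniformizer $\varepsilon$ and fresh indeterminates $z_m$, so that the substituted values $g_{mi}+\varepsilon z_m$ remain algebraically independent over $\FF_0$ and the columns left of $\sigma'$ keep their generic rank over the fraction field; then use the $y$-homogeneity to split off and treat separately the finitely many columns $\{1\}\cup\rho_0$ with $i\in\rho_0$ (the only obstruction to rank being preserved modulo $\varepsilon$), so that a dependence can be obtained with $\varepsilon$-integral coefficients whose reduction modulo $\varepsilon$ is the sought relation over $\FF_0$. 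The remaining items — signs from reordering wedge products, injectivity and order-compatibility of $\rho_0\mapsto\{i\}\cup\rho_0$ below $\sigma$, and the (immediate) reduction of the full near-cone condition to this single step — are routine.
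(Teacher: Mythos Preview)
Your overall strategy matches the paper's: work with the contrapositive form (if $\sigma'=\{1\}\cup\sigma_0$ is a nonface of $\Delta_g(K)$ then so is $\sigma=\{i\}\cup\sigma_0$), exploit that every $\tau\lex<\sigma'$ contains $1$ so that all relevant columns of $g^{\wedge S}$ are linear in $y=(g_{11},\dots,g_{n1})$, and then specialize $y$. You also correctly isolate the crux: a dependence relation over $\FF_0(y)$ need not survive the specialization $y\mapsto g_{\cdot i}$, because the coefficients $c_\rho$ may have poles there.

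The gap is in your DVR repair. After substituting $y_m\mapsto g_{mi}+\varepsilon z_m$, membership of the target in the column span over the fraction field does \emph{not} descend to the residue field: from $t\in M\cdot\mathrm{Frac}(R)^N$ with $t$ and the columns of $M$ integral, one only obtains $\varepsilon^k t\in M\cdot R^N$ for some $k\geq 0$, and when $k>0$ reduction modulo $\varepsilon$ yields a relation among the reduced columns of $M$, not one involving $\bar t$. Your assertion that the $B$-columns are the \emph{only} obstruction to rank preservation is unsupported: the $A$-columns at $\varepsilon=0$ are the specific $\FF_0$-vectors $\overline{f_i\wedge f_{\rho_0}}$, and nothing rules out new dependencies among these. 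The $y$-homogeneity of the columns does not help either, since the relation coefficients $c_\rho(y)$ need not be homogeneous of degree zero.

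The paper sidesteps the whole difficulty with one move: replace the degenerating substitution $y_m\mapsto g_{mi}$ by the \emph{translation} $y_m\mapsto y_m+g_{mi}$. This is an $\FF_0$-affine change of variables, so the translated values remain algebraically independent over $\FF_0$, and hence the common denominator $p$ of the $c_\rho$ does not vanish after substitution---the regularity issue simply never arises. By linearity of minors in the first column, the substitution sends $g^{\wedge S}_{*\sigma'}\mapsto g^{\wedge S}_{*\sigma'}\pm g^{\wedge S}_{*\sigma}$ and each $g^{\wedge S}_{*\{1\}\cup\rho_0}\mapsto g^{\wedge S}_{*\{1\}\cup\rho_0}\pm g^{\wedge S}_{*\{i\}\cup\rho_0}$ (the second summand vanishing when $i\in\rho_0$). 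Rearranging the substituted relation exhibits $g^{\wedge S}_{*\sigma}$ as an $\EE$-combination of $g^{\wedge S}_{*\sigma'}$, the $g^{\wedge S}_{*\tau}$ for $\tau\lex<\sigma'$, and the $g^{\wedge S}_{*\{i\}\cup\rho_0}$ for $\rho_0\lex<\sigma_0$---all columns strictly left of $\sigma$. This is precisely the Permutation Lemma manoeuvre you invoke; the point is that it renders the DVR detour unnecessary.
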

\begin{proof}
	\newcommand{\sigmaA}{\RepVert{\sigma}{1}{j}}
	\newcommand{\tauA}{\RepVert{\tau}{1}{j}}
	We assume that the field $\EE=\FF(g_{11},\dots,g_{n1})$ is generated by the first column of $g$ is a purely transcendental extension of $\FF$.
	Moreover, we assume that all other coefficients of $g$ lie in $\FF$.
	That is, $\EE=\FF(g)$ is isomorphic to the field of $\FF$-rational functions in $n$ indeterminates.
	To show that $\Delta_g(K)$ is a near cone,
	let $\sigma \subseteq [n]$ be a nonface of $\Delta_g(K)$ of dimension $k$ with $1 \in \sigma$.
	We have to show that for every $j \notin \sigma$, also $\sigmaA$ is a nonface of $\Delta_g(K)$.

	Let $S=K^k$ be the set of $k$-faces of $K$.
	By \cref{def:partial-g-shift}, $\sigma$ is a nonface of $\Delta_g(K)$ if there are coefficients $\gamma_\tau \in \EE$ parameterized by $\tau \lex< \sigma$ such that for all $k$-faces $\rho$ of $K$, we have
	\begin{equation}
		\label{eq:col-cone:lin rel}
		g^{\wedge S}_{\rho\sigma} - \smashoperator{\sum_{\tau \lex< \sigma}} \gamma_\tau g^{\wedge S}_{\rho\tau} \ = \ 0 \enspace .
	\end{equation}
	Observe that $1 \in \sigma$ together with $\tau \lex< \sigma$ implies that $1 \in \tau$.
	Since $\EE = \FF(g_{11},\dotsc,g_{n1})$ we may treat \eqref{eq:col-cone:lin rel} as a collection of rational equations
	\begin{equation}
		\label{eq:col-cone:rat rel}
		r_\sigma(g_{11},\dotsc,g_{n1}) - \smashoperator[l]{\sum_{\tau \lex< \sigma}} \frac{q_\tau(g_{11},\dotsc,g_{n1})}{p(g_{11},\dotsc,g_{n1})} r_\tau(g_{11},\dotsc,g_{n1}) \ = \ 0
	\end{equation}
	for polynomials $p, q_\tau, r_\sigma, r_\tau$ in the $n$ indeterminates $g_{11},\dotsc,g_{n1}$ with coefficients in $\FF$.
	Here $p$ and $q_\tau$ arise from bringing the rational functions $\gamma_\tau$ to a common denominator;
	and $r_\sigma$ and $r_\tau$, respectively, are the $(\rho, \sigma)$- and $(\rho,\tau)$-minors of the matrix $g$.
	Let $j \in [n] \setminus \sigma$.
	To show that $\sigmaA \notin K$, we will show that evaluating \eqref{eq:col-cone:rat rel}
	on $h_{11},\dotsc, h_{n1}$ yields zero, where we let $h_{i1} \coloneqq g_{i1} + g_{ij}$.
	Multiplying both sides of \eqref{eq:col-cone:rat rel} with the polynomial $p$ yields that $z \coloneqq	pr_\sigma - \sum_{\tau \lex< \sigma} q_\tau r_\tau$
	is the zero polynomial in $\FF[g_{11},\dotsc,g_{n1}]$.
	In particular, evaluating $z$ on $h_{11},\dotsc, h_{n1}$ yields zero.
	The substitution of $g_{i1}$ by $h_{i1}$ is affine-linear over $\FF$.
	Consequently, $\EE=\FF(g_{11},\dotsc,g_{n1})=\FF(h_{11},\dotsc, h_{n1})$, and $h_{11},\dotsc, h_{n1}$ are $\FF$-algebraically independent.
	By construction the denominator polynomial $p$ is nonzero, and hence $p(h_{11},\dotsc,h_{n1}) \neq 0$.
	We obtain
	\begin{equation}
		\label{eq:col-cone:rat rel2}
		r_\sigma(h_{11},\dotsc,h_{n1}) - \smash{\smashoperator[l]{\sum_{\tau \lex< \sigma}} \frac{q_\tau(h_{11},\dotsc,h_{n1})}{p(h_{11},\dotsc,h_{n1})}} r_\tau(h_{11},\dotsc,h_{n1}) \ = \ 0 \enspace,
	\end{equation}
	where\leavevmode\vspace{\abovedisplayshortskip-\abovedisplayskip}
	\begin{align*}
		r_\sigma(h_{11},\dotsc,h_{n1}) \ &= \ r_\sigma(g_{11},\dotsc,g_{n1}) + r_{\sigma}(g_{1j},\dotsc,g_{nj}) \ = \ g^{\wedge S}_{\rho\sigma} \pm g^{\wedge S}_{\rho\sigmaA}  \quad \text{and} \\[3pt]
		r_\tau(h_{11},\dotsc,h_{n1}) \ &= \ \begin{cases}
			g^{\wedge S}_{\rho\tau} & \text{if $j \in \tau$,} \\
			\smash[b]{g^{\wedge S}_{\rho\tau} \pm g^{\wedge S}_{\rho\tauA}} & \text{if $j \notin \tau$}
		\end{cases}
	\end{align*}
	because the polynomials $r_\sigma$ and $r_\tau$ arise as minors of $g^{\wedge S}$.
	Since $\sigma \lex< \sigmaA$ the $k$-faces $\tau$ in \eqref{eq:col-cone:rat rel2} also satisfy $\tau \lex\leq \sigmaA$.
	Clearly $\tau \lex< \sigma$ also implies $\tauA \lex< \sigmaA$.
	Hence, by \eqref{eq:col-cone:rat rel2} we have $g^{\wedge S}_{*\sigmaA}  \in \Span[\big]{g^{\wedge S}_{*\upsilon}; \upsilon \lex< \sigmaA} $.
	This establishes that $\sigmaA $ is a nonface of $\Delta_g(K)$.
\end{proof}
\begin{remark}
	\label{rmk:first column independent}
	In the situation of \cref{thm:col-cone}, it suffices to assume that the entries $g_{11},\dotsc,g_{n-1,1}$
	are algebraically independent over $\FF(g_{n1},g_{12},\dotsc,g_{nn})$, and that $g_{n,1}$ is not zero.
	In this case, multiplying the first column of $g$ by a transcendental $y$ over $\FF(g)$  yields a matrix
	that has algebraically independent entries in the first column without changing $\Delta_g(-)$.
\end{remark}
\pagebreak
\begin{example}
	The $n$-cycle $c_n \coloneqq (1\ 2\ \ldots\ n)$ gives rise to the matrix
	\[
	\frR(c_n) = \begin{psmallmatrix}
		x_{1,n}   & 1 &        &   \\
		\vdots    &   & \ddots &   \\
		x_{n-1,n} &   &        & 1 \\
		1         & 0 & \cdots & 0
	\end{psmallmatrix}.
	\]
	The six-vertex triangulation $K$ of the real projective plane from \cref{tab:shifted partial shifts of RP2} is not a near cone.
	In contrast, its partial shift $\Delta_{\frR(c_6)}(K)$ (with coefficients in $\GF{2})$ has facets $\{123$, $124$, $125$, $126$, $134$, $135$, $136$, $146$, $156$, $236$, $45\}$ and thus is a near cone, but not a shifted complex.
\end{example}

We would like to find a statement that relates $w \in \SymmetricGroup{n}$ to the topology of $\Delta_{\frR(w)}(K)$.
Putting together statements involved in the proof of~\cite[Theorem~3.1]{BjornerKalai:1988} we have:

\begin{lemma}
	\label{thm:bjorner kalai collected}
	For any simplicial complex $K$ we have
	\[\beta_k(K) \ = \ \abs{K^k} - \abs{\Set{\sigma \in \Delta(K^{k+1}); 1 \in \sigma}} - \abs{\Set{\sigma \in \Delta(K^{k}); 1 \in \sigma}} \enspace.\]
\end{lemma}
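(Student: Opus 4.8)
The plan is to reduce the statement to three facts that are already on the table: full shifting preserves the (\(\FF\)\nobreakdash-)Betti numbers \cite[Theorem~3.1]{BjornerKalai:1988}, the full shift of a complex is again a complex with \(\Delta(K)^s = \Delta(K^s)\) for every \(s\) (\cref{thm:shifting preserves complexes}), and the homology formula for near cones (\cref{thm:betti near cone}). First I would write \(\beta_k(K) = \beta_k(\Delta(K))\) and note that \(\Delta(K)\), being shifted, is a near cone, so \cref{thm:betti near cone} gives
\[
	\beta_k(K) \ = \ \abs[\big]{\Set{\sigma \in \Delta(K^k) ; \sigma \cup \{1\} \notin \Delta(K)}} \enspace .
\]
The remaining work is to evaluate this count. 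I would split \(\Delta(K^k)\) into the faces containing \(1\) and the set \(B \coloneqq \Set{\sigma \in \Delta(K^k); 1 \notin \sigma}\). A face \(\sigma\) with \(1 \in \sigma\) has \(\sigma \cup \{1\} = \sigma \in \Delta(K)\) and so never contributes; hence, using \(\Delta(K)^{k+1} = \Delta(K^{k+1})\), the count above equals \(\abs{B} - \abs{\Set{\sigma \in B ; \sigma \cup \{1\} \in \Delta(K^{k+1})}}\).

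The one point that needs a genuine argument is that \(\sigma \mapsto \sigma \cup \{1\}\) is a bijection from \(\Set{\sigma \in B ; \sigma \cup \{1\} \in \Delta(K^{k+1})}\) onto \(\Set{\tau \in \Delta(K^{k+1}); 1 \in \tau}\). Well\nobreakdash-definedness and injectivity are immediate; for surjectivity I would invoke that \(\Delta(K)\) is a simplicial complex: given \(\tau \in \Delta(K^{k+1})\) with \(1 \in \tau\), the subset \(\tau \setminus \{1\}\) is a \(k\)-face of \(\Delta(K)\), hence lies in \(\Delta(K)^k = \Delta(K^k)\), avoids \(1\), and is sent back to \(\tau\). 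This yields \(\abs{\Set{\sigma \in B ; \sigma \cup \{1\} \in \Delta(K^{k+1})}} = \abs{\Set{\tau \in \Delta(K^{k+1}); 1 \in \tau}}\).

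Finally I would assemble the pieces. By \cref{thm:shift preserves cardinality} we have \(\abs{\Delta(K^k)} = \abs{K^k}\), so \(\abs{B} = \abs{K^k} - \abs{\Set{\sigma \in \Delta(K^k); 1 \in \sigma}}\); substituting everything back gives
\[
	\beta_k(K) \ = \ \abs{K^k} - \abs[\big]{\Set{\sigma \in \Delta(K^{k+1}); 1 \in \sigma}} - \abs[\big]{\Set{\sigma \in \Delta(K^{k}); 1 \in \sigma}} \enspace ,
\]
which is the asserted identity. I do not expect a serious obstacle: the whole proof is a careful repackaging of the near\nobreakdash-cone homology formula, and the only place to be attentive is keeping the cardinality bookkeeping and the dimension conventions (an \(s\)-face has \(s+1\) vertices) straight while checking the bijection.
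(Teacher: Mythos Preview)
Your proposal is correct and is precisely what the paper has in mind: the lemma is stated there without proof, simply as ``putting together statements involved in the proof of~\cite[Theorem~3.1]{BjornerKalai:1988}'', and your argument unpacks exactly those ingredients (Betti preservation under full shifting, the near-cone homology formula, and $\Delta(K)^s=\Delta(K^s)$). The bijection and cardinality bookkeeping you spell out are the only things to check, and you handle them correctly.
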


We now have enough to prove the main theorem of this section.
It relates a permutation $w \in \SymmetricGroup{n}$ to the topology of of the partial shift $\Delta_{\frR(w)}(K)$ with respect to $w$.
\begin{theorem}
	\label{thm:n-cycle}
	Let $c_n$ be the $n$-cycle $(1 \ 2 \ 3 \ \ldots \ n) \in \SymmetricGroup{n}$ and $v, w \in \SymmetricGroup{n}$ such that $w = c_nv$ and $\ell(c_n v) = \ell(c_n)+\ell(v)$.
	Then for any simplicial complex $K$,
	\begin{enumerate}
		\item $\Delta_{\frR(w)}(K)$ is a near cone, and\label{thm:n-cycle:i}
		\item $\Delta_{\frR(w)}(K)$ has the same Betti numbers as $K$.\label{thm:n-cycle:ii}
	\end{enumerate}
\end{theorem}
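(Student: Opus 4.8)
The plan is to represent $\Delta_{\frR(w)}$ by a matrix in a normal form to which \cref{thm:col-cone} applies. Since $w=c_nv$ with $\ell(c_nv)=\ell(c_n)+\ell(v)$, \cref{thm:u(ws) and u(w)u(s)} gives $\Delta_{\frR(w)}(K)=\Delta_{g_0}(K)$ for $g_0\coloneqq\frR(c_n)\,\frR(v)^{c_n}$. Write $h\coloneqq\frR(v)^{c_n}$. From the explicit shape of $\frR(c_n)$ (first column $\xi=(x_{1n},\dots,x_{n-1,n},1)^{\mathsf T}$, remaining columns $e_1,\dots,e_{n-1}$) one reads off that the $j$-th column of $g_0$ is $h_{1j}\xi+(h_{2j},\dots,h_{nj},0)^{\mathsf T}$, so in particular the last row of $g_0$ is the first row of $h$. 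Two facts about $h$ are used: none of $x_{1n},\dots,x_{n-1,n}$ occurs among its entries (the entries of $\frR(v)^{c_n}$ are $0$, $1$, or $x_{a\cdot c_n^{-1},\,b\cdot c_n^{-1}}$ with $(a,b)\in\inv v$, which forces $b\ge 2$), and $h_{11}$ is either $1$ or a single indeterminate $x_{n,m}$ with $m<n$, in particular $h_{11}\ne 0$. Using \cref{thm:invariance upper triangular} I would subtract $h_{1j}h_{11}^{-1}$ times the first column from the $j$-th column for each $j\ge 2$; by the displayed formula the $\xi$-parts cancel, so the new columns $2,\dots,n$ involve only entries of $h$ and have vanishing last coordinate. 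Finally I rescale the first column by $h_{11}^{-1}$ via \cref{thm:invariance diagonal matrices}. The resulting matrix $g$ has $\Delta_g(K)=\Delta_{\frR(w)}(K)$; its last row is $(1,0,\dots,0)$; its first column equals $x'+e_n$ with $x'_i=x_{in}+h_{i+1,1}h_{11}^{-1}\in V'\coloneqq\langle e_1,\dots,e_{n-1}\rangle$, and $x'_1,\dots,x'_{n-1}$ are algebraically independent over $\FF$ and over the remaining entries of $g$ (all of which lie in the subfield generated by $h$, a field not containing $x_{1n},\dots,x_{n-1,n}$); and since $g$ is invertible, columns $2,\dots,n$ of $g$ span $V'$.

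Part \cref{thm:n-cycle:i} is now immediate: $g$ satisfies the hypotheses of \cref{thm:col-cone} in the relaxed form of \cref{rmk:first column independent}, namely $g_{11},\dots,g_{n-1,1}$ are algebraically independent over the field generated by the other entries of $g$, and $g_{n1}=1\ne 0$. Hence $\Delta_g(K)=\Delta_{\frR(w)}(K)$ is a near cone.

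For part \cref{thm:n-cycle:ii} I first record the elementary formula $\beta_d(L)=f_d(L)-a_d(L)-a_{d+1}(L)$ for a near cone $L$, where $a_d(L)$ denotes the number of $d$-faces of $L$ containing the vertex $1$; this follows from \cref{thm:betti near cone} by splitting the $d$-faces $\sigma$ with $\sigma\cup\{1\}\in L$ into those containing $1$ and those not, the latter being in bijection with the $(d{+}1)$-faces containing $1$ via $\sigma\mapsto\sigma\cup\{1\}$. Because shifting preserves $f$-vectors (\cref{thm:shifting preserves complexes}) and \cref{thm:bjorner kalai collected} reads $\beta_d(K)=f_d(K)-a_d(\Delta(K))-a_{d+1}(\Delta(K))$ after the identification $\Delta(K^d)=\Delta(K)^d$, it suffices to prove $a_d(\Delta_{\frR(w)}(K))=a_d(\Delta(K))$ for all $d$. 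Since the $(d{+}1)$-subsets containing $1$ form a lexicographic initial segment, the computation of the rank sequence from \eqref{eq:shift r-seq} shows that $a_d(\Delta_g(K))$ equals the rank of the submatrix of $g^{\wedge(d+1)}$ with rows indexed by $K^d$ and columns indexed by the $(d{+}1)$-sets $\sigma$ with $1\in\sigma$. By the structural properties of $g$ above, $ge_1=x'+e_n$ and $\{ge_2,\dots,ge_n\}$ is a basis of $V'$, so the span of those columns is $(x'+e_n)\wedge\bigwedge^d V'$; wedging with $x'+e_n$ is injective on $\bigwedge^d V'$ because $e_n\notin V'$, so that span has the full dimension $\binom{n-1}{d}$. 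Consequently $a_d(\Delta_g(K))=\dim\pi_{K^d}\bigl((x'+e_n)\wedge\bigwedge^d V'\bigr)$, where $\pi_{K^d}$ is the coordinate projection onto $K^d$. This quantity depends on $g$ only through $x'$, is lower semicontinuous in $x'$, and therefore attains its maximal (``generic'') value on our $g$ because the entries of $x'$ are algebraically independent over $\FF$; that maximal value is an invariant of $K$ and $d$ alone. The same normal-form construction applies to $w_0$ (which also satisfies $w_0\ge c_n$), and there $\Delta_g(K)=\Delta_{\frR(w_0)}(K)=\Delta(K)$ by \cref{prop:w0}. Hence $a_d(\Delta_{\frR(w)}(K))$ and $a_d(\Delta(K))$ both equal this generic value, and combining with the two displayed formulas yields $\beta_d(\Delta_{\frR(w)}(K))=\beta_d(K)$.

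The main obstacle is the bookkeeping in the first paragraph: one has to check carefully that the column operations are genuinely $\Delta$-preserving (they are right multiplications by unipotent upper triangular matrices, but one must verify $h_{11}\ne 0$ so that they, and the subsequent rescaling, are defined), and that after these operations $x_{1n},\dots,x_{n-1,n}$ really do survive as a generic first column and are absent from every other entry — both hinging on the observation that these indeterminates do not occur in $\frR(v)^{c_n}$. A secondary point in part \cref{thm:n-cycle:ii} is that algebraic independence of the entries of $x'$ over $\FF$, rather than over the possibly larger field generated by the other entries of $g$, already forces the generic rank; this is exactly where lower semicontinuity of the rank is invoked, and it is the only place where the comparison with the $w_0$-case is needed.
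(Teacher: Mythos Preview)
Your proof is correct, but it diverges from the paper's in both parts.

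For part~(i) you take an unnecessary detour. The paper observes directly that the length condition $\ell(c_nv)=\ell(c_n)+\ell(v)$ forces $v$ to fix $1$, so $1\cdot w^{-1}=n$ and the first column of $\frR(w)$ is literally the $n$th column of $\frU(w)$, namely $(x_{1n},\dots,x_{n-1,n},1)^{\mathsf T}$. \Cref{thm:col-cone} (with \cref{rmk:first column independent}) then applies to $\frR(w)$ itself, with no factorization or column operations needed. Your route through $g_0=\frR(c_n)\frR(v)^{c_n}$ and subsequent column cleaning reaches the same conclusion but is considerably heavier; in particular, since $v$ fixes $1$ one always has $h_{11}=1$, so your case ``$h_{11}=x_{n,m}$'' never occurs and the rescaling step is vacuous.

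For part~(ii) the two proofs establish the same intermediate claim, that $a_d(\Delta_{\frR(w)}(K))=a_d(\Delta(K))$ for all $d$, but by genuinely different mechanisms. The paper argues matroidally: it uses \cref{thm:u(ws) and u(w)u(s)} to identify $\ColMat(\frR(w_0)^{\wedge S})$ with $\ColMat\bigl((\frR(w)\frR(w^{-1}w_0)^w)^{\wedge S}\bigr)$, observes that $w^{-1}w_0$ fixes $1$ and hence $(w^{-1}w_0)^{\wedge k}$ permutes the columns indexed by the initial segment $J=\{\sigma:1\in\sigma\}$ among themselves, and then chains equalities of the form $|I_m\cap J|=|I_{mw}\cap J|$ together with \cref{thm:invariance upper triangular}. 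Your argument is instead geometric: having arranged $ge_1=x'+e_n$ and $\operatorname{span}(ge_2,\dots,ge_n)=V'$, you note that $a_d(\Delta_g(K))=\dim\pi_{K^d}\bigl((x'+e_n)\wedge\bigwedge^d V'\bigr)$ depends on $g$ only through $x'$, and invoke lower semicontinuity to conclude that any $x'$ with $\FF$-algebraically independent coordinates yields the common generic value. This is a clean alternative, and it is precisely here that the normal form you set up in part~(i) pays off; the paper's more direct argument for~(i) does not immediately give such a uniform description of the column span over $J$, which is why it proceeds combinatorially instead.
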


\begin{proof}
		For \proofref{thm:n-cycle:i},
		we have $\inv c_n = \Set{(i, n); i < n}$, and by \cref{thm:inv(vw)}, we have that $\inv c_n \subseteq \inv c_nv$.
		Therefore, by definition, the first $n-1$ entries of the last column of the matrix $\frU(c_nv) = (e + \frX|_{\inv c_nv})$ are algebraically independent.
		Hence, the first $n-1$ entries in the first column of $\frR(c_n)$ are algebraically independent.
		Also by \cref{thm:inv(vw)} we have $\inv c_n \cap (\inv v) \cdot c_n^{-1} = \emptyset$, which implies that $1 \cdot v = 1$.
		Since $v$ fixes $1$, the matrix $\frR(c_nv)$ has the same first column as $\frR(c_n)$.
		The first statement then follows from \cref{thm:col-cone}, together with \cref{rmk:first column independent}.

		\medskip
		To prove \proofref{thm:n-cycle:ii} it is enough to show:
		\begin{claim}
			\label{claim:card 1 in sigma}
			$
			\abs{\Set{\sigma \in \Delta(K^k); 1 \in \sigma}} = \abs{\Set{\sigma \in \Delta_{\frR(w)}(K^k); 1 \in \sigma}}.
			$
		\end{claim}
		Before proving the claim, we first note the following.
		For a matrix $m$, let $I_m$ denote the set of column indices corresponding to a lexicographically minimal basis of $\ColMat(m)$.
		\begin{claim}
			\label{claim:I(mw)}
			Let $m$ be a matrix, $J$ be an initial set of column indices of $m$,
			and $w$ be a permutation of the columns of $m$ such that $J\cdot w = J$.
			Then $\abs{I_{m} \cap J} = \abs{I_{mw}\cap J}$.
		\end{claim}
		\begin{claimproof}[Proof of \cref{claim:card 1 in sigma}]
			Let $S \coloneqq K^{k}$ be the $k$-simplices of $K$.
			By \cref{thm:u(ws) and u(w)u(s)},
			\begin{equation}
				\label{eq:proof near cones:column matroids}
				\ColMat(\frR(w_0)^{\wedge S})
				\;=\;
				\ColMat\bigl((\frR(w) \: \frR(w^{-1}w_0)^{w})^{\wedge S}\bigr).
			\end{equation}
			By \cref{thm:inv(vw)}, we have that $\inv c_n \cap (\inv c_n^{-1}w_0) \cdot c_n^{-1} = \emptyset$, which implies $1 \cdot c_n^{-1}w_0 = 1$, hence $w^{-1}w_0 = v^{-1}c_n^{-1}w_0$ must also fix $1$.
			Extending the action on vertices to an action on sets we see that for $J \coloneqq \Set{\sigma \in \binom{[n]}{k}; 1 \in \sigma}$ we have $J \cdot w^{-1}w_0 = J $.
			Note that the thus defined set $J$ is an initial segment of $\binom{[n]}{k}$ with respect to the lexicographic order,
			so $J$ satisfies the requirements of \cref{claim:I(mw)}.
			This also implies that $(w_0w)^{\wedge k}$ permutes the columns of $\bigl( \frR(w)\frR(w^{-1}w_0)^w \bigr)^{\wedge S}$, possibly changing signs of the columns.
			For the same reason, $(w_0 w)^{\wedge k}$ permutes the columns of $\frR(w_0)^{\wedge S}$ indexed by $J$;
			hence, we have that columns indexed by $J$ in $\frR(w_0)^{\wedge S}$ and $\frR(w_0)^{\wedge S}(w_0w)^{\wedge k}$ span the same vector space over $\EE$.
			Let $I_{\frR(w_0)}$ be the subset of $\binom{[n]}{k}$ that indexes the lexicographically minimal bases for the column span of $\frR(w_0)^{\wedge S}$,
			and define $I_{\frR(w_0)w_0w}$ and $I_{\frR(w)\frU(w^{-1}w_0)^{w}}$ analogously.
			We have
			\begin{align*}
				&\phantom{{}={}} \abs{\Set{\sigma \in \Delta(K^k); 1 \in \sigma}}\\
				&= \abs{I_{\frR(w_0)} \cap J} &&\text{by definition of $\Delta(K^k)$} \\
				&= \abs{I_{\frR(w_0)w_0w} \cap J} &&\text{by the above reasoning and \cref{claim:I(mw)}}\\
				&= \abs{I_{\frR(w)\frR(w^{-1}w_0)^{w}} \cap J} &&\text{by \eqref{eq:proof near cones:column matroids}}\\
				&= \abs{I_{\frR(w)\frU(w^{-1}w_0)^{w}} \cap J} &&\text{again by \cref{claim:I(mw)}}\\
				&= \abs{I_{\frR(w)} \cap J} &&\text{by \cref{thm:invariance upper triangular} since $\frU(w^{-1}w_0)^{w}$ is upper triangular}\\
				&= \abs{\Set{\sigma \in \Delta_{\frR(w)}(K^k); 1 \in \sigma}} &&\text{by definition of $\Delta_{\frR(w)}(K^k)$.}
			\end{align*}
			This establishes the claim.
		\end{claimproof}
		By \cref{thm:bjorner kalai collected} and by the fact that shifting preserves the f-vector, we have
		\begin{align*}
			\beta_k(K) \ &=\  \abs{\Delta_{\frR(w)}(K^{k})} - \abs{\Set{\sigma \in \Delta_{\frR(w)}(K^{k+1}); 1 \in \sigma}} - \abs{\Set{\sigma \in \Delta_{\frR(w)}(K^{k}); 1 \in \sigma}} \\
			&=\ \abs{\Set{\sigma \in \Delta_{\frR(w)}(K^k); \{1\} \cup \sigma \notin \Delta_{\frR(w)}(K^{k+1})}} \enspace,
		\end{align*}
		where the second equality follows because for a simplex $\sigma \in \Delta_{\frR(w)}(K^{k})$,
		precisely one of the following holds:
		either $\sigma \ni 1$, or $\sigma \not\ni 1$ and $\{1\} \cup \sigma \in \Delta_{\frR(w)}(K^{k+1}) $, or $\sigma \not\ni 1$ and $\{1\} \cup \sigma \notin \Delta_{\frR(w)}(K^{k+1})$.
		Now the statement follows from \cref{thm:betti near cone} and \proofref{thm:n-cycle:i}.\qedhere
\end{proof}

In the proof of \cref{thm:n-cycle:i}, we see that all entries of the $\frR(w)$ are nonzero.
The argument from~\cite[289\psq]{BjornerKalai:1988} than yields an alternative proof of \cref{thm:n-cycle:ii}.
Our final example shows that the assumptions of \cref{thm:n-cycle} are tight.

\begin{example}\label{exmp:tight}
  We return to the triangulation $\RR\PP^2_6$ of the real projective plane from \cref{exmp:rp26:contracted} and  \cref{tab:shifted partial shifts of RP2}.
  Using \OSCAR\ we computed the partial shifts in characteristic zero for all $6! = 720$ permutations on six letters.
  Out of these $120$ are at least as large as the $6$-cycle in the right weak order of $\SymmetricGroup{6}$.
  None of the remaining $600$ partial shifts preserves the rational Betti numbers,
  except for the trivial partial shift by the identity.
  Further, the partial shift $\Delta_{\frR(c_n)}^{\GF{2}}(\RR\PP^2_6)$ has $\GF{2}$-Betti numbers $(1,1,1)$, just like $\RR\PP^2_6$, but it is not a shifted complex.
\end{example}
%
%

\section{Outlook}
\label{sec:outlook}
It seems desirable to derive further results relating the topology of a simplicial complex with its partial shifts.
We believe that partial shifting always (weakly) increases the Betti numbers:
\begin{conjecture}
	\label{thm:betti increase}
	For any simplicial complex $K \subseteq 2^{[n]}$, field $\FF$ and $w \in \SymmetricGroup{n}$,
	we have $\beta^\FF_i(\Delta^\FF_{\frR(w)}(K)) \geq \beta^\FF_i(K)$ for every $i \in \NN$,
	where $\beta_i(K)$ denotes the $i$th Betti number of $K$.
\end{conjecture}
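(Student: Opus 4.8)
The plan is to transfer the statement into the exterior algebra $E \coloneqq \bigwedge\EE^n$ and then invoke semicontinuity of graded Betti numbers under a Gröbner degeneration. Recall from \cref{rmk:gin,rmk:partial-shift-gin} that for \emph{every} $g\in\GL(n,\EE)$ one has $I_{\Delta_g(K)} = \In(g\cdot I_K)$, the initial ideal of $g\cdot I_K$ with respect to the lexicographic term order; in particular $I_{\Delta_{\frR(w)}(K)} = \In(\frR(w)\cdot I_K)$. Since $\frR(w)$ acts on $E$ as a graded $\EE$-algebra automorphism carrying $I_K$ (extended to $E$) onto $\frR(w)\cdot I_K$, the $E$-modules $E/I_K$ and $E/(\frR(w)\cdot I_K)$ have the same graded Betti numbers; and $E/(\frR(w)\cdot I_K)$ admits a flat one-parameter degeneration to $E/\In(\frR(w)\cdot I_K) = E/I_{\Delta_{\frR(w)}(K)}$, since the standard monomials with respect to the term order form an $\EE$-basis of the quotient (so the Hilbert function is preserved) and on the finite truncation of $E$ in degrees $\le n$ the term order is realised by a weight vector, producing the degeneration as a $\mathbb{G}_m$-limit.

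The second ingredient is a Hochster-type formula over the exterior algebra, due in essence to Aramova--Herzog: for a simplicial complex $K$ on $[n]$, the top internal-degree strand of $\operatorname{Tor}^E_\bullet(\FF,E/I_K)$ is computed by $K$ alone, with no contribution from proper restrictions $K|_W$ (there is no room for them in internal degree $n$), and yields
\[
  \beta^{E}_{n-k-1,\,n}(E/I_K)\ =\ \dim_\FF \tilde H_k(K;\FF)\ =\ \tilde\beta_k(K)
\]
for all $k$. One checks this, e.g., on the boundary complex of a simplex on $[n]$, where it recovers $\tilde H_{n-2}=\FF$ from a single first syzygy of the principal ideal $(e_{[n]})$, and $0$ in all other degrees; it can also be read off directly from the Cartan/BGG complex computing $\operatorname{Tor}^E(\FF,-)$. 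Granting this, for any $w\in\SymmetricGroup{n}$ we obtain $\tilde\beta_k(\Delta_{\frR(w)}(K)) = \beta^{E}_{n-k-1,n}\bigl(E/\In(\frR(w)\cdot I_K)\bigr)$, which by upper-semicontinuity of graded Betti numbers along the flat degeneration above is $\ge \beta^{E}_{n-k-1,n}\bigl(E/(\frR(w)\cdot I_K)\bigr) = \beta^{E}_{n-k-1,n}(E/I_K) = \tilde\beta_k(K)$. For $k\ge 1$ this is exactly $\beta_i^\FF(\Delta_{\frR(w)}(K))\ge\beta_i^\FF(K)$, and for $k=0$ one adds $1$ to both sides, since the reduced and unreduced zeroth Betti numbers differ by a constant on nonempty complexes. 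Notably the argument uses nothing about $\frR(w)$ beyond invertibility, so it would in fact prove $\beta_i^\FF(\Delta_g(K))\ge\beta_i^\FF(K)$ for \emph{all} $g\in\GL(n,\EE)$; this is consistent with \cref{thm:n-cycle} (where the inequality becomes an equality) and with \cref{exmp:rp26:contracted} (where it is strict).

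The main obstacle lies in the two foundational facts this deduction rests on. First, the identification $\tilde\beta_k(K)=\beta^E_{n-k-1,n}(E/I_K)$ must be established with the correct homological and internal degrees: the minimal free resolution of $E/I_K$ over $E$ is infinite and $\operatorname{Tor}^E$ behaves very differently from the polynomial-ring case (already $\dim_\FF\operatorname{Tor}^E_i(\FF,\FF)=\binom{n+i-1}{i}$), so the claim that the degree-$n$ strand recovers precisely the ordinary homology of $K$ — rather than some alternating combination of homologies of restrictions — has to be checked, not assumed. Second, one must confirm that upper-semicontinuity of graded Betti numbers holds for flat families over the graded-commutative Artinian ring $E$; morally this is identical to the classical commutative statement (ranks of differentials in a graded minimal resolution can only drop under specialisation), but it should be pinned to a precise reference in the theory of generic initial ideals over exterior algebras, or given a short self-contained proof. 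Once both points are secured the conjecture is immediate; the combinatorial content it packages is that, because shifting preserves the f-vector (\cref{thm:shifting preserves complexes}), $\Delta_{\frR(w)}$ can only decrease, for each $k$, the sum $\rk\partial_k+\rk\partial_{k+1}$ of the ranks of two consecutive simplicial boundary maps.
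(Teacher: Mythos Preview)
This statement is listed in the paper as an open \emph{conjecture}; the authors do not prove it, so there is no argument in the paper to compare yours against. Your proposed route does not settle it either, and the obstruction is precisely the first of the two foundational facts you yourself flag as unverified.

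The identity $\tilde\beta_k(K)=\beta^{E}_{n-k-1,\,n}(E/I_K)$ is false for the $\ZZ$-graded exterior Betti numbers. Take $n=3$ and let $K$ consist of three isolated vertices, so that $I_K=(e_{12},e_{13},e_{23})=E_{\ge 2}$. Then $\tilde\beta_0(K)=2$, whereas a direct computation of either the second syzygies or the Cartan complex in total degree~$3$ gives $\beta^{E}_{2,3}(E/I_K)=8$. Your test case $\partial\Delta^{n-1}$ succeeds only because every proper induced subcomplex of it is contractible; in general the total-degree-$n$ strand of $\operatorname{Tor}^E_\bullet(\FF,E/I_K)$ receives contributions from \emph{all} restrictions $K|_W$, not merely from $W=[n]$, since the Cartan resolution of $\FF$ over $E$ involves divided powers of unbounded internal degree. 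The correct statement is the multigraded one, $\dim_\FF\operatorname{Tor}^E_{n-k-1}(\FF,E/I_K)_{(1,\dots,1)}=\tilde\beta_k(K)$, and in the three-point example the multidegree-$(1,1,1)$ piece is indeed $2$-dimensional. But this cannot be fed into your semicontinuity step, because $\frR(w)\cdot I_K$ is only $\ZZ$-graded, not $\ZZ^n$-graded, so there is no multigraded Betti number on the generic side of the degeneration to compare with. In short, the Hochster-type formula and the upper-semicontinuity you need are each available, but only in mutually incompatible gradings; this is why the conjecture does not fall to the standard Gr\"obner-degeneration argument.
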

If the above conjecture is true, the following second conjecture would be implied.
\begin{conjecture}
	\label{thm:contracted shift graph acyclic}
	For any $n,k,m \in \NN$, the contracted partial shift graph $\ContractedGraph(n,k,m)$ is acyclic.
\end{conjecture}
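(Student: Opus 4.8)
The plan is to deduce \cref{thm:contracted shift graph acyclic} from \cref{thm:betti increase}. Fix $n$, $k$, $m$; we may assume $k \geq 2$, since for $k=1$ there is a unique shifted $1$-uniform hypergraph with $m$ edges. To a hypergraph $S \in \binom{\binom{[n]}{k}}{m}$ attach the complex $K(S)$ of \cref{rmk:gin}, with face set $S \cup \bigcup_{d<k}\binom{[n]}{d}$. Every layer $\binom{[n]}{d}$ with $d<k$ is a complete hypergraph, hence fixed by any partial shift (a subhypergraph of the same cardinality, by \cref{thm:shift preserves cardinality}); consequently $\Delta_{\frR(w)}(K(S)) = K(\Delta_{\frR(w)}(S))$ for every $w$, and in particular $\Delta(K(S)) = K(\Delta(S))$. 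Since $K(S)$ contains the complete $(k{-}2)$-skeleton of the simplex on $[n]$, \cref{thm:bjorner kalai collected} shows that the Betti numbers of $K(S)$ vanish outside degrees $k-2$ and $k-1$, that $\beta_{k-1}(K(S)) = m - b(S)$, and that — with $m$ fixed — the remaining Betti numbers are likewise affine, weakly decreasing functions of the single integer
\[
b(S) \ \coloneqq \ \abs{\Set{\sigma \in \Delta(S); 1 \in \sigma}} .
\]
Therefore \cref{thm:betti increase}, applied to the complexes $K(S)$, implies that $b(\Delta_{\frR(w)}(S)) \leq b(S)$ for all $S$ and all $w \in \SymmetricGroup{n}$. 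Moreover $b$ is constant on the fibres of $S \mapsto \Delta(S)$: shifted complexes are sinks of the partial shift graph (\cref{rmk:sinks-shifted}), so $\Delta(\Delta(S)) = \Delta(S)$ and hence $b(\Delta(S)) = b(S)$. Thus $b$ descends to a function $\bar b$ on the vertices of $\ContractedGraph(n,k,m)$ — explicitly $\bar b(A) = \abs{\Set{\sigma \in A; 1 \in \sigma}}$ — which is non-increasing along every edge of $\ContractedGraph(n,k,m)$.

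Now suppose, for contradiction, that $\ContractedGraph(n,k,m)$ contained a directed cycle $A_0 \to A_1 \to \dotsb \to A_r = A_0$ through at least two distinct vertices; we may assume consecutive $A_i$ are distinct. Running $\bar b$ around the cycle forces $\bar b(A_0) = \dotsb = \bar b(A_r)$, so every edge of the cycle is $\bar b$-preserving, and it suffices to prove that for each value $c$ the full subgraph of $\ContractedGraph(n,k,m)$ on the vertices $A$ with $\bar b(A) = c$ is acyclic. Lift an edge $A_i \to A_{i+1}$ of that subgraph to an edge $S_i \to T_i$ of $\ShiftGraph(n,k,m)$, with $T_i = \Delta_{\frR(w_i)}(S_i)$, $\Delta(S_i) = A_i$ and $\Delta(T_i) = A_{i+1}$; since $A_i \neq A_{i+1}$ we have $S_i \neq T_i$, hence $T_i \lex< S_i$ (\cref{rmk:sinks-shifted}), while $b(S_i) = b(T_i) = c$.

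The remaining step is the heart of the matter, and is the reason \cref{thm:contracted shift graph acyclic} is recorded only conditionally: one must rule out a $\bar b$-constant cycle. The cleanest route would be to prove the stronger local claim that a $b$-preserving partial shift already preserves the full shift — that is, $b(\Delta_{\frR(w)}(S)) = b(S)$ implies $\Delta(\Delta_{\frR(w)}(S)) = \Delta(S)$. Granted this, no $\bar b$-preserving edge of $\ContractedGraph(n,k,m)$ could join distinct vertices, so $\bar b$ would be strictly decreasing along every non-loop edge and $\ContractedGraph(n,k,m)$ would be acyclic. To establish the local claim, the plan is to refine the near-cone arguments behind \cref{thm:col-cone} and \cref{thm:n-cycle} together with the rank-sequence monotonicity of \cref{thm:rank sequences lex order}: the integer $b(S)$ records exactly the faces of $\Delta(S)$ lying in the star of the vertex $1$, and the hypothesis that this number is unchanged by $\Delta_{\frR(w)}$ should force $\frR(w)$ to fix the rank data of the pertinent compound matrices up to the upper-triangular ambiguity of \cref{thm:invariance upper triangular}, which together with \cref{thm:full shift lex-smallest} would identify $\Delta(\Delta_{\frR(w)}(S))$ with $\Delta(S)$. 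I expect this final identification to be the main obstacle — \cref{rmk:shift does not commute with product} shows that partial shifts compose badly, so a naive iteration is unavailable — and it seems likely that a fully rigorous proof would require strengthening \cref{thm:betti increase} (say, to a statement about the graded Betti numbers of the Stanley--Reisner ideals), which would explain why only the implication between the two conjectures is asserted here.
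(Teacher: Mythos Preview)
The statement is recorded in the paper as a \emph{conjecture}; the paper gives no proof, only the one-line remark that it would follow from \cref{thm:betti increase}. So there is no argument in the paper to compare against beyond that bare assertion of implication.

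Your attempt to flesh out the implication is the natural one, and the reduction to the integer $b(S)$ is correct as far as it goes: via \cref{thm:bjorner kalai collected} the Betti numbers of $K(S)$ are affine functions of $b(S)$ alone, so \cref{thm:betti increase} does yield that $\bar b$ is non-increasing along every edge of $\ContractedGraph(n,k,m)$. You are also right that this by itself does not finish the job, and the gap you name is real. The paper's own \cref{tab:shifted partial shifts of RP2} already exhibits two distinct shifted $3$-uniform hypergraphs $C$ and $D$ on six vertices with $\bar b(C)=\bar b(D)=7$ (equivalently, identical Betti vectors $(1,3,3)$); so $\bar b$ cannot separate all vertices of the contracted graph, and a monotone-function argument based on $\bar b$ alone cannot exclude a hypothetical $\bar b$-constant cycle through such vertices.

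Your proposed ``local claim'' --- that a $b$-preserving partial shift already preserves the full shift --- does not follow from \cref{thm:betti increase} in any way you establish; the sketch invoking \cref{thm:col-cone}, \cref{thm:n-cycle} and \cref{thm:rank sequences lex order} is suggestive but not an argument, and you say as much. So your proposal is not a proof, nor does it verify the paper's claimed implication. Whether that implication admits a short rigorous justification, or whether (as you suspect) it really requires a strengthening of \cref{thm:betti increase} to an invariant fine enough to distinguish all shifted hypergraphs, is left open both by the paper and by your write-up.
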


It makes sense to consider subgraphs of partial shift graphs with respect to subgroups of $\GL(n,\EE)$.
Particularly interesting are reductive algebraic groups such as the symplectic and orthogonal groups (over perfect fields); see, e.g., Borel~\cite{Borel:1991}.
They also admit a Bruhat decomposition, where the symmetric group $\SymmetricGroup{n}$ (which is the Coxeter group of type A$_{n-1}$) is replaced by a different Coxeter group.
\begin{question}
  What can be said about partial shifting with respect to Coxeter groups of other types?
\end{question}

Note that $2$-uniform hypergraphs are the same as simple graphs.
Moreover, the shifted graphs are precisely the \enquote{threshold graphs} studied in the monograph~\cite{threshold} by Mahadev and Peled.
The following question could be considered in the context of~\cite[Problem~7.2]{Kalai02}:
\begin{question}
  Can we describe the partial shift graph of an arbitrary graph?
  More specifically: for which graphs $G$ does the partial shift graph $\ShiftGraph(G)$ have a unique sink?
\end{question}

Algebraic shifting can also be studied in the polynomial ring rather than the exterior algebra.
This is known as \enquote{symmetric shifting}; we refer to Kalai's survey~\cite{Kalai02} for the precise definition.
Some classical results on algebraic shifting work in both settings.
Sometimes, however, there is a difference.
\begin{question}
  To what extent do our results on partial exterior shifting also hold for symmetric shifting?
\end{question}
Note that symmetric shifting is known to be related to framework rigidity~\cite[\S2.7]{Kalai02}.
In a personal communication, Francisco Santos asked if something can be said about (symmetric or exterior) partial shifting with respect to matrices that are constrained, e.g., in terms of rigidity.


	\sloppy
	\printbibliography

@inproceedings {Kalai02,
    AUTHOR = {Kalai, Gil},
     TITLE = {Algebraic shifting},
 BOOKTITLE = {Computational commutative algebra and combinatorics},
 editor = {Hibi, Takayuki},
venue = {Osaka},
eventdate={1999},
series = {Advanced Studies in Pure Mathematics},
    shortseries = {Adv. Stud. Pure Math.},
    number = {33},
     PAGES = {121--163},
 PUBLISHER = {Mathematical Society of Japan},
 shortpublisher = {Math. Soc. Japan},
 location = {Tokyo},
      YEAR = {2002},
      ISBN = {4-931469-17-5},
   MRCLASS = {52B70 (05E25 06A11 13D03 52B22)},
  MRNUMBER = {1890098},
MRREVIEWER = {Alexander\ I.\ Barvinok},
       DOI = {10.2969/aspm/03310121},
}

@article{Kalai84,
    AUTHOR = {Kalai, Gil},
     TITLE = {Characterization of {$f$}-vectors of families of convex sets in {${\bf R}^d$}. {I}. {N}ecessity of {E}ckhoff's conditions},
   shortjournal = {Israel J. Math.},
  journal = {Israel Journal of Mathematics},
    VOLUME = {48},
      YEAR = {1984},
    NUMBER = {2--3},
     PAGES = {175--195},
      ISSN = {0021-2172},
   MRCLASS = {52A20 (52A37)},
  MRNUMBER = {770700},
MRREVIEWER = {P.\ McMullen},
       DOI = {10.1007/BF02761163},
}

@book {BjornerBrenti:2005,
    AUTHOR = {Bj\"{o}rner, Anders and Brenti, Francesco},
     TITLE = {Combinatorics of {C}oxeter groups},
    SERIES = {Graduate Texts in Mathematics},
    number = {231},
 PUBLISHER = {Springer},
 location = {New York},
      YEAR = {2005},
      ISBN = {978-3540-442387},
      doi = {10.1007/3-540-27596-7},
   MRCLASS = {05-01 (05E15 20F55)},
  MRNUMBER = {2133266},
MRREVIEWER = {Jian-yi\ Shi},
}

@article{Kalai90,
    AUTHOR = {Kalai, Gil},
     TITLE = {Symmetric matroids},
   shortjournal = {J. Combin. Theory},
  journal = {Journal of Combinatorial Theory},
  series = {B},
    VOLUME = {50},
      YEAR = {1990},
    NUMBER = {1},
     PAGES = {54--64},
      ISSN = {0095-8956,1096-0902},
   MRCLASS = {05B35},
  MRNUMBER = {1070465},
MRREVIEWER = {J.\ M. S. Sim\~oes-Pereira},
       DOI = {10.1016/0095-8956(90)90096-I},
}

@book {Abramenko+Brown:2008,
    AUTHOR = {Abramenko, Peter and Brown, Kenneth S.},
     TITLE = {Buildings},
    SERIES = {Graduate Texts in Mathematics},
    number = {248},
      subtitle = {Theory and applications},
 PUBLISHER = {Springer},
 location = {New York},
      YEAR = {2008},
      ISBN = {978-0-387-78834-0},
   MRCLASS = {20E42 (20F55 20J06 51E24 51F15)},
  MRNUMBER = {2439729},
MRREVIEWER = {Ralf\ Koehl},
       DOI = {10.1007/978-0-387-78835-7},
}

@article {Aramova+Herzog:2000,
    AUTHOR = {Aramova, Annetta and Herzog, J\"{u}rgen},
     TITLE = {Almost regular sequences and {B}etti numbers},
   shortjournal = {Amer. J. Math.},
  journal = {American Journal of Mathematics},
    VOLUME = {122},
      YEAR = {2000},
    NUMBER = {4},
     PAGES = {689--719},
      ISSN = {0002-9327,1080-6377},
   MRCLASS = {13F55 (13D02)},
  MRNUMBER = {1771569},
MRREVIEWER = {Rafael\ H.\ Villarreal},
       doi = {10.1353/ajm.2000.0025}
}

@article{BjornerKalai:1988,
    AUTHOR = {Bj\"orner, Anders and Kalai, Gil},
     TITLE = {An extended {E}uler-{P}oincar\'e{} theorem},
   shortjournal = {Acta Math.},
  journal = {Acta Mathematica},
    VOLUME = {161},
      YEAR = {1988},
    NUMBER = {3-4},
     PAGES = {279--303},
      ISSN = {0001-5962,1871-2509},
   MRCLASS = {52A37 (51H99 55N99)},
  MRNUMBER = {971798},
MRREVIEWER = {Sergey\ Yuzvinsky},
       DOI = {10.1007/BF02392300},
}

@incollection{Frankl:1987,
	title = {The Shifting Technique in Extremal Set Theory},
	booktitle = {Surveys in Combinatorics},
	author = {Frankl, Peter},
	editor = {Whitehead, C.},
	date = {1987},
	series = {London Mathematical Society Lecture Note Series},
	number = {123},
	publisher = {Cambridge University Press},
	location = {Cambridge},
	booksubtitle = {Invited Papers for the Eleventh British},
	isbn = {0-521-34805-6},
	annotation = {Combinatorial Conference}
}

@article{ErdosKoEtAl:1961,
    AUTHOR = {Erdős, Paul and Ko, Chao and Rado, Richard},
     TITLE = {Intersection theorems for systems of finite sets},
   shortjournal = {Q. J. Math.},
  journal = {Quarterly Journal of Mathematics},
  series = 2,
    VOLUME = 12,
      YEAR = 1961,
     PAGES = {313--320},
      ISSN = {0033-5606,1464-3847},
   MRCLASS = {04.60},
  MRNUMBER = 140419,
MRREVIEWER = {S.\ Ginsburg},
       DOI = {10.1093/qmath/12.1.313},
}

@book {threshold,
    AUTHOR = {Mahadev, N. V. R. and Peled, U. N.},
     TITLE = {Threshold graphs and related topics},
    SERIES = {Annals of Discrete Mathematics},
    number = {56},
 PUBLISHER = {North-Holland},
 location = {Amsterdam},
      YEAR = {1995},
      ISBN = {0-444-89287-7},
   MRCLASS = {05-02 (05C30 05C75)},
  MRNUMBER = {1417258},
MRREVIEWER = {Arkadzi\ Charniak},
}

@misc{KeehnNevo:2024,
  author = {Keehn, Aaron and Nevo, Eran},
  title =  {Exterior Shifting of Low Genus Surfaces},
  year =   2024,
  eprint = {2405.12758},
  archivePrefix={arXiv},
  primaryClass={math.CO}
}

@article{Nevo:2005,
    AUTHOR = {Nevo, Eran},
     TITLE = {Algebraic shifting and basic constructions on simplicial complexes},
   shortjournal = {J. Algebraic Combin.},
  journal = {Journal of Algebraic Combinatorics. An International Journal},
    VOLUME = {22},
      YEAR = {2005},
    NUMBER = {4},
     PAGES = {411--433},
      ISSN = {0925-9899,1572-9192},
   MRCLASS = {05E25 (13F55)},
  MRNUMBER = {2191645},
MRREVIEWER = {Jakob\ Jonsson},
       DOI = {10.1007/s10801-005-4626-0},
}

@book {Fulton:Young+tableaux,
    AUTHOR = {Fulton, William},
     TITLE = {Young tableaux},
    SERIES = {London Mathematical Society Student Texts},
    number = {35},
      subtitle = {With applications to representation theory and geometry},
 PUBLISHER = {Cambridge University Press},
 location = {Cambridge},
      YEAR = {1997},
      ISBN = {0-521-56144-2},
   MRCLASS = {05E10 (05E05 05E15 14M15 20G05)},
  MRNUMBER = {1464693},
MRREVIEWER = {Tadeusz\ J\'ozefiak},
}

@article{HerzogTerai:1999,
	title = {Stable Properties of Algebraic Shifting},
	author = {Herzog, Jürgen and Terai, Naoki},
	date = {1999},
	journaltitle = {Results in Mathematics},
	shortjournal = {Results. Math.},
	volume = {35},
	number = {3--4},
	pages = {260--265},
	issn = {0378-6218, 1420-9012},
	doi = {10.1007/BF03322817},
	langid = {english}
}

@article{EhrenborgRota:1993,
	title = {Apolarity and Canonical Forms for Homogeneous Polynomials},
	author = {Ehrenborg, Richard and Rota, Gian-Carlo},
	date = {1993},
	journaltitle = {European Journal of Combinatorics},
	shortjournal = {Eur. J. Comb.},
	volume = {14},
	number = {3},
	pages = {157--181},
	issn = {0195-6698},
	doi = {10.1006/eujc.1993.1022}
}

@book{OSCAR-book,
  editor = {Decker, Wolfram and Eder, Christian and Fieker, Claus and Horn, Max and Joswig, Michael},
  title = {The computer algebra system {\OSCAR}: algorithms and examples},
  publisher = {Springer},
  series = {Algorithms and {C}omputation in {M}athematics},
  volume = 32,
  year = {2025},
  doi = {10.1007/978-3-031-62127-7},
}

@online{Milne:2022,
author={Milne, James S.},
title={Fields and Galois Theory (v5.10)},
year={2022},
url={www.jmilne.org/math}
}

@book {Stanley:EC1,
    AUTHOR = {Stanley, Richard P.},
     TITLE = {Enumerative combinatorics},
     volume = {1},
    SERIES = {Cambridge Studies in Advanced Mathematics},
    number = {49},
   EDITION = {2},
 PUBLISHER = {Cambridge University Press},
 location = {Cambridge},
      YEAR = {2012},
      ISBN = {978-1-107-60262-5},
   MRCLASS = {05-02 (05A15 06-02)},
  MRNUMBER = {2868112},
}

@book {Bump:2013,
    AUTHOR = {Bump, Daniel},
     TITLE = {Lie groups},
    SERIES = {Graduate Texts in Mathematics},
    number = {225},
   EDITION = {2},
 PUBLISHER = {Springer},
 location = {New York},
      YEAR = {2013},
      ISBN = {978-1-4614-8023-5},
   MRCLASS = {22-01 (22C05 22E46 22E60)},
  MRNUMBER = {3136522},
MRREVIEWER = {Ben\ Lewis\ Cox},
       DOI = {10.1007/978-1-4614-8024-2},
}

@book {Borel:1991,
    AUTHOR = {Borel, Armand},
     TITLE = {Linear algebraic groups},
    SERIES = {Graduate Texts in Mathematics},
    number = {126},
   EDITION = {2},
 PUBLISHER = {Springer},
 location = {New York},
      YEAR = {1991},
      ISBN = {0-387-97370-2},
   MRCLASS = {20-01 (20Gxx)},
  MRNUMBER = {1102012},
MRREVIEWER = {F.\ D.\ Veldkamp},
       DOI = {10.1007/978-1-4612-0941-6},
}

@book{Knuth:1998,
	maintitle = {The Art of Computer Programming},
	author = {Knuth, Donald Ervin},
	title = {Sorting and searching},
	date = {1998},
	edition = {2},
	volume = {3},
	publisher = {Addison-Wesley},
	location = {Boston},
	isbn = {0-201-89685-0},
	langid = {english},
}

@article {Murai+Hibi:2009,
    AUTHOR = {Murai, Satoshi and Hibi, Takayuki},
     TITLE = {Algebraic shifting and graded {B}etti numbers},
   shortjournal = {Trans. Amer. Math. Soc.},
  JOURNAL = {Transactions of the American Mathematical Society},
    VOLUME = {361},
      YEAR = {2009},
    NUMBER = {4},
     PAGES = {1853--1865},
      ISSN = {0002-9947,1088-6850},
   MRCLASS = {13D02 (13F55)},
  MRNUMBER = {2465820},
MRREVIEWER = {Christopher\ A.\ Francisco},
       DOI = {10.1090/S0002-9947-08-04707-7},
}

@article{Knutson:1408.1261,
  author =        {Knutson, Allen},
  title =         {Schubert calculus and shifting of interval positroid varieties},
  year =          {2014},
  eprint =        {1408.1261},
  archivePrefix = {arXiv}
}

@article{BulavkaGandiniWoodroofe:2406.17857,
  author =        {Bulavka, Denys and Gandini, Francesca and Woodroofe, Russ},
  title =         {A Hilton-Milner theorem for exterior algebras},
  year =          {2025},
  eprint =        {2406.17857},
  archivePrefix = {arXiv}
}

@article{Woodroofe:2022,
	author = {Woodroofe, Russ},
	title = {An algebraic groups perspective on Erdős–Ko–Rado},
	journal = {Linear and Multilinear Algebra},
	volume = {70},
	number = {22},
	pages = {7825--7835},
	year = {2022},
	publisher = {Taylor \& Francis},
	doi = {10.1080/03081087.2021.2013428}
}

@article {Murai07,
    AUTHOR = {Murai, Satoshi},
     TITLE = {Algebraic shifting of cyclic polytopes and stacked polytopes},
   JOURNAL = {Discrete Math.},
  FJOURNAL = {Discrete Mathematics},
    VOLUME = {307},
      YEAR = {2007},
    NUMBER = {14},
     PAGES = {1707--1721},
      ISSN = {0012-365X,1872-681X},
   MRCLASS = {52B70 (05E99 52B05 57Q05)},
  MRNUMBER = {2316811},
MRREVIEWER = {Lyuba\ S.\ Alboul},
       DOI = {10.1016/j.disc.2006.09.018},
       URL = {https://doi.org/10.1016/j.disc.2006.09.018},
}

@BOOK{Fiedler:2013,
	title     = "Special matrices and their applications in numerical mathematics",
	author    = "Fiedler, Miroslav",
	publisher = "Dover Publications",
	series    = "Dover Books on Mathematics",
	edition   =  2,
	year      =  2013
}
\end{document}